\newtheorem{theorem}{Theorem}[section]
\newtheorem*{theorem*}{Theorem}
\theoremstyle{plain}
\newtheorem{lemma}[theorem]{Lemma}
\newtheorem{proposition}[theorem]{Proposition}
\newtheorem{conjecture}[theorem]{Conjecture}
\newtheorem*{angularity}{Angularity conjecture}
\newtheorem*{lemma*}{Lemma}
\newtheorem*{question*}{Question}
\theoremstyle{definition}
\newcommand{\CC}{\mathbb{C}}
\newcommand{\PP}{\mathbb{P}}
\newcommand{\RR}{\mathbb{R}}
\newcommand{\calA}{\mathcal{A}}
\newcommand{\calC}{\mathcal{C}}
\newcommand{\calF}{\mathcal{F}}
\newcommand{\calK}{\mathcal{K}}
\newcommand{\calL}{\mathcal{L}}
\newcommand{\calV}{\mathcal{V}}
\newcommand{\dt}{\left.\frac{d}{dt}\right|_{t=0}}
\DeclareMathOperator{\vol}{vol}
\DeclareMathOperator{\id}{id}
\DeclareMathOperator{\glob}{glob}
\DeclareMathOperator{\Grass}{Gr}
\DeclareMathOperator{\Sym}{Sym}
\DeclareMathOperator{\im}{Im}
\DeclareMathOperator{\Curv}{Curv}
\newcommand{\largewedge}{\mbox{\Large $\wedge$}}
\DeclareMathOperator{\codim}{codim}
\DeclareRobustCommand{\pder}[1]{%
  \@ifnextchar\bgroup{\@pder{#1}}{\@pder{}{#1}}}
\newcommand{\@pder}[2]{\frac{\partial#1}{\partial#2}}
\title[]{Classification of angular curvature measures and a proof of the angularity conjecture}
\author{Thomas Wannerer}
\email{thomas.wannerer@uni-jena.de}
\address{Fakult\"at f\"ur Mathematik und Informatik, Friedrich-Schiller-Universit\"at Jena, 07743 Jena, Germany}
\date{\today}
\subjclass[2010]{53C65, 52A38}
\thanks{Supported by DFG grant WA 3510/1-1.}
\begin{document}

\begin{abstract}
This article is concerned with the interplay between the theory of smooth valuation on manifolds and Riemannian geometry. 
We confirm the angularity conjecture formulated by A.~Bernig, J.H.G.~Fu, and G.~Solanes which sheds new light on the geometric meaning of the 
Lipschitz-Killing valuations. The proof relies on a complete classification of translation-invariant 
angular curvature measures on $\RR^n$, a result  of independent interest. 
\end{abstract}

\maketitle

\section{Introduction}

Over the last decade it has become clear that the theory of valuations on convex bodies, a classical line of research in convex geometry, admits a natural 
continuation  in the setting of  general smooth manifolds. 
According to the pioneering work of Alesker \cites{valmfds_survey,valmfdsI,valmfdsII,valmfdsIII,valmfdsIV,valmfdsIG}, to each  smooth manifold $M$ is associated the commutative filtered algebra of smooth valuations $\calV(M)$ on $M$,
with the Euler characteristic $\chi$ as multiplicative identity. Loosely speaking, smooth valuations on $M$  are finitely additive set functions satisfying a smoothness condition 
and the Alesker product of valuations reflects the operation of intersection of subsets of $M$.  It was soon realized 
 that this new structure can be used to solve classical problems in integral geometry: the description of explicit kinematic formulas in complex space forms---a problem first taken up by Blaschke and his school \cites{blaschke39,rohde40,varga39} in the 1930s with special cases solved 
by Santal\'o \cites{santalo52}, Gray \cite{gray_tubes}, Shifrin \cite{shifrin81}, and others---had to wait until 2015
when it was finally found by Bernig, Fu, and Solanes \cites{hig,bfs} using the new tools from  valuation theory introduced by Alesker.

This paper is concerned with the interplay between valuation theory and  Riemannian geometry. 
This new line of research originated from the observation that in order to identify structures that help in the investigation of the integral geometry of all 
isotropic spaces  it is fruitful to study canonical classes of valuations on general Riemannian manifolds (see \cite{fw_Riemannian}).
The intrinsic volumes of a convex body play a fundamental role in convex geometry. Their extension to   Riemannian
manifolds, the Lipschitz-Killing valuations, will be our primary focus in this paper.  Their existence is non-trivial: 
Alesker first observed  that it follows from  a classical theorem of H.~Weyl \cite{weyl_tubes} on the volume of tubes in combination with the Nash embedding theorem; an alternative 
approach in the spirit of Chern's intrinsic proof of the Gauss-Bonnet theorem can be found in \cite{fw_Riemannian}.  
The closely related Lipschitz-Killing curvatures of $M$ have  remarkable properties; they arise for example 
in the asymptotic expansion of the trace of the heat kernel \cite{donnelly} and they converge  under approximations of a Riemannian manifold by 
a piecewise linear one \cite{cms}. The conjectures 
presented in the next paragraph shed new light on the geometric meaning of the Lipschitz-Killing valuations.

Smooth valuations may be localized, albeit non-uniquely. The resulting space of smooth curvature measures on $M$, 
denoted  by $\calC(M)$, is naturally a module over $\calV(M)$ with respect to the Alesker product. 
Bernig, Fu, and Solanes \cite{bfs} observed that a Riemannian metric on  $M$ induces a canonical isomorphism 
$$\tau\colon \calC(M)\to \Gamma(\Curv(TM))$$
between smooth curvature measures 
on $M$ and smooth sections of the bundle of translations-invariant smooth curvature measures on the  tangent spaces of $M$. 
%We recall the construction of the map  $\tau$ in Section~\ref{sec:transfer}. 
Following \cite{bfs} this allows us to make the following definition: a curvature measure on $M$ is  called angular 
if  $\tau_p\Phi$ is angular for every point $p\in M$.
Here a translation-invariant curvature measure $\Psi$ on $\RR^n$ is called angular if there 
exist functions $f_k$ on the Grassmannian $\Grass_k(\RR^n)$ of $k$-dimensional linear subspaces in $\RR^n$
such that  
\begin{equation}\label{eq:angular_nonhom}\Psi (P,U)= \sum_k \sum_{\dim F=k} f_k(\overline F) \gamma(F,P)\vol_k(F\cap U)\end{equation}
for every polytope $P\subset \RR^n$ and Borel set $U\subset \RR^n$, where 
the first sum extends over all integers $k=0,1,\ldots,n$,  the  second sum is over all $k$-faces of $P$, 
$\overline F$ is the translate of the  affine hull of $F$ containing the origin, and
$\gamma(F,P)$ is the external angle of $P$ at the face $F$.  
Let $\calA(M)$ denote the space of angular curvature measures on $M$. It may seem surprising, but there are natural curvature measures, e.g., arising in hermitian integral geometry \cite{hig}, that are not angular.

Motivated by their results on the integral geometry of complex space forms,   Bernig, Fu, and Solanes  \cite{bfs}  formulated the following

\begin{angularity}
 Let $M$ be a Riemannian manifold. Then $\calA(M)$ is invariant under the action of the Lipschitz-Killing algebra,
$$\calL\calK(M)\cdot \calA(M)\subset \calA(M)$$
\end{angularity}

Here $\cdot$ denotes the Alesker product. We call a valuation $\mu$ 
angular if $\mu\cdot \calA(M)\subset \calA(M)$. The angularity conjecture states that the Lipschitz-Killing valuations are angular. Bernig, Fu, and Solanes \cite{bfs}
conjecture that this property even characterizes the Lipschitz-Killing valuations:

\begin{conjecture}\label{conj:angularity2}
 The algebra of angular valuations on $M$ equals $\calL\calK(M)$.  
\end{conjecture}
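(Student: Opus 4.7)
The strategy is to localize and then reduce to a translation-invariant classification on $\RR^n$. First, I would show that the condition $\mu\cdot \calA(M)\subset \calA(M)$ is pointwise in the tangent spaces: since $\calA(M)$ itself is defined via the Bernig-Fu-Solanes isomorphism $\tau$ as those curvature measures whose fibers $\tau_p\Phi$ are translation-invariant angular at every $p$, and since the Alesker product of a smooth valuation with a smooth curvature measure is local on $M$, one should be able to argue that $\mu$ is angular on $M$ if and only if its "principal part" at each $p\in M$, viewed as a translation-invariant valuation on $T_pM$, preserves $\Ang(T_pM)$. A standard germ-and-cutoff argument, of the same kind used to prove the angularity conjecture itself, should suffice. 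Combined with the tangentwise characterization of $\calL\calK(M)$, the problem reduces to the following statement on $\RR^n$: the translation-invariant valuations $\mu\in\Val(\RR^n)$ with $\mu\cdot \Ang(\RR^n)\subset \Ang(\RR^n)$ are precisely the intrinsic volumes.

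By Hadwiger's theorem the intrinsic volumes are exactly the $O(n)$-invariant translation-invariant valuations, so it suffices to show that any angular-preserving $\mu\in \Val(\RR^n)$ must be $O(n)$-invariant. Here I would invoke the classification of $\Ang(\RR^n)$ proved in this paper: writing a general $\Psi\in \Ang(\RR^n)$ explicitly through its determining Grassmannian functions $f_k$, and using the differential-form realisation of the Alesker product on the cosphere bundle, one can compute $\mu\cdot \Psi$ in coordinates and then demand that it satisfy the angularity equation \eqref{eq:angular_nonhom} for \emph{every} admissible family $(f_0,\ldots,f_n)$ allowed by the classification. Decomposing $\Val(\RR^n)$ into its $O(n)$-isotypic components, these demands become linear constraints on the individual isotypic pieces of $\mu$, and the goal is to show that the totality of the constraints annihilates every non-trivial summand.

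The main obstacle lies in this last rigidity step. Producing, for each non-trivial irreducible $O(n)$-summand of $\Val(\RR^n)$, an explicit test measure $\Psi\in \Ang(\RR^n)$ witnessing that $\mu\cdot \Psi$ falls outside $\Ang(\RR^n)$ is a highly structural question that requires a fine understanding of the interaction between the classification of $\Ang(\RR^n)$ and the module structure of $\calC(\RR^n)$ over $\Val(\RR^n)$. A cleaner variant would be to exhibit a single "generic" angular measure $\Psi_0$ whose Alesker stabiliser inside $\Val(\RR^n)$ is exactly $\calL\calK(\RR^n)$; existence of such a $\Psi_0$ would yield Conjecture~\ref{conj:angularity2} in one stroke. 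Either route, this rigidity is the step where essential new input beyond the angularity conjecture itself will be required.
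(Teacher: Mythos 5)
This statement is \emph{not} proved in the paper. It is stated as an open conjecture (due to Bernig, Fu, and Solanes), and the paper's main result, Theorem~\ref{conj:angularity}, establishes only the forward containment $\calL\calK(M)\cdot\calA(M)\subset\calA(M)$; the reverse inclusion, namely that every angular valuation lies in $\calL\calK(M)$, remains open. The text even says explicitly that for Conjecture~\ref{conj:angularity2} only evidence from the integral geometry of complex space forms is available. You should therefore not expect to find a complete argument to compare against, and indeed your own write-up honestly flags the rigidity step as requiring ``essential new input beyond the angularity conjecture itself.'' That acknowledged gap is the gap: without an argument that the only translation-invariant valuations on $\RR^n$ preserving $\Ang(\RR^n)$ under the Alesker module action are the intrinsic volumes, the proposal does not prove the conjecture, and no such argument is currently known.

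There is also a secondary issue in the localization step that deserves mention. You implicitly assume that, analogously to the curvature-measure isomorphism $\tau\colon\calC(M)\to\Gamma(\Curv(TM))$, a smooth valuation $\mu\in\calV(M)$ has a ``principal part'' at each $p\in M$ landing in $\Val(T_pM)$, and that angularity of $\mu$ on $M$ is equivalent to the tangentwise condition that these principal parts preserve $\Ang(T_pM)$. But the paper only constructs $\tau$ for curvature measures; for valuations one has a filtration $\calV(M)=W_0\supset W_1\supset\cdots\supset W_n$ whose associated graded identifies with sections of $\Val(TM)$, not a genuine isomorphism, and the Alesker product with a curvature measure does not simply commute with passage to the associated graded. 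So even granting the rigidity step on $\RR^n$, the reduction from $M$ to $T_pM$ needs a real argument, not just the germ-and-cutoff technique used for Theorem~\ref{conj:angularity}, where the directions of both containments and the presence of $\tau$ made that technique available.
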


In the presence  of additional invariance assumptions the angularity conjecture  is known to be true in the following special cases: 
translation-invariant curvature measures on $\RR^n$  and isometry-invariant 
curvature measures in complex projective space $\CC P^n$. Both results are contained in  \cite{bfs}. 
Also for Conjecture~\ref{conj:angularity2} the integral geometry of complex space forms provides evidence \cite{bfs_dual}.

The main result of this paper is 
\begin{theorem}\label{conj:angularity}
 The angularity conjecture is true. 
\end{theorem}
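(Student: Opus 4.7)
The strategy has two main ingredients: a complete classification of translation-invariant angular curvature measures on $\RR^n$ (the technical core, flagged already in the abstract), and a localization of the module action of $\calL\calK(M)$ on $\calC(M)$ to the tangent spaces via the Bernig--Fu--Solanes isomorphism $\tau\colon\calC(M)\to\Gamma(\Curv(TM))$. Together these should reduce the global assertion to a Euclidean statement already handled, plus a rigid check on curvature-correction terms.

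First I would establish the classification of $\calA(\RR^n)$. Representing translation-invariant curvature measures by differential forms on the sphere bundle $S\RR^n=\RR^n\times S^{n-1}$, I would translate the angular identity \eqref{eq:angular_nonhom} into explicit linear constraints on these forms modulo the kernel of the Rumin differential. The aim is to prove that a translation-invariant angular curvature measure is both determined by, and can be freely prescribed by, a tuple $(f_0,\ldots,f_n)$ of smooth ``Klain-type'' functions on the Grassmannians $\Grass_k(\RR^n)$ subject to explicit compatibility relations. The expected proof proceeds by analyzing the $O(n)$-decomposition of the admissible sphere-bundle forms, combined with induction on the degree of homogeneity and the boundary behaviour of the Rumin complex. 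This gives angular measures a rigid parametrization analogous to Klain's embedding for simple translation-invariant valuations.

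Next I would globalize. For $\mu\in\calL\calK(M)$ and $\Phi\in\calC(M)$, the pointwise value $\tau_p(\mu\cdot\Phi)$ is expressible in terms of finitely many jets at $p$ of the data $(\mu,\Phi,g)$. The leading-order piece is the Euclidean Alesker product of $\tau_p\mu$ and $\tau_p\Phi$ on $T_pM$, which preserves angularity by the Euclidean version of the conjecture already known from \cite{bfs}. The remaining pieces are ``curvature corrections'' depending linearly on the Riemann tensor of $M$ at $p$ and on covariant derivatives of the differential form representative of $\Phi$. Using the classification, one must show that these correction operators, transported through the Klain parametrization of $\calA(\RR^n)$, land again in the image of that parametrization; this is a finite-dimensional check on each tangent space, exploiting the explicit action of the Lipschitz-Killing algebra in the translation-invariant setting.

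The hard part is this second step. The curvature corrections are not themselves translation-invariant curvature measures in any natural way, so one cannot invoke the Euclidean case of \cite{bfs} directly on them. The classification is essential precisely because it characterizes angularity via rigid Grassmannian data rather than pointwise sampling, and one must verify that the Riemann-tensor-dependent symbols of the correction operators respect this rigidity. A secondary obstacle is disentangling the gauge freedom in the form representation of curvature measures so that the correction is well-defined modulo Rumin-exact pieces; this will likely require singling out canonical representatives adapted to the angular condition, a normalization that should fall out of the classification itself.
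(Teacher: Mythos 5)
The high-level shape of your plan (classify $\calA(\RR^n)$, then globalize through the transfer map $\tau$) matches the paper's, and your first step, while vague in its proposed method (you lean on the Rumin complex and an $O(n)$-decomposition rather than the paper's direct analysis of the cone function $c_\Psi$ via integrals over conormal cones and the representation theory of the Pl\"ucker embedding), is aimed at essentially the right target: the paper proves that translation-invariant angular curvature measures of degree $k<n-1$ are precisely the constant coefficient curvature measures, parametrized by restrictions of $2$-homogeneous polynomials to the image of the Pl\"ucker embedding.

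The genuine gap is in your second step, and you have correctly identified it as the hard part without resolving it. You propose to expand $\tau_p(\mu\cdot\Phi)$ in jets as a leading Euclidean Alesker product plus ``curvature correction'' operators linear in the Riemann tensor, and then to check that these corrections preserve the angular parametrization on each $T_pM$. No such expansion is established, and as you observe, the corrections are not themselves translation-invariant curvature measures, so the known Euclidean case of the conjecture does not apply to them; your plan ends at ``one must verify'' a finite-dimensional statement that is not identified concretely. The paper avoids analyzing curvature corrections entirely. The missing ingredient is the pullback invariance of angularity under isometric immersions (Theorem~\ref{thm:pullback} in the paper), which you do not mention. With it, the argument runs: embed $M$ isometrically into $\RR^N$, extend a given angular $\Phi$ to a curvature measure $\Psi\in\calC(\RR^N)$ that is angular at all points of $M$ (up to an error in the top two filtration steps, where angularity is automatic — this extension uses the classification of Theorem~\ref{thm:ccc}), prove in $\RR^N$ that multiplication by $V_1$ preserves pointwise angularity via the Crofton/kinematic description of the Alesker product, and then pull back, using that $f^*$ is a module homomorphism and that pullback preserves angularity. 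This sidesteps any jet expansion; the whole weight rests on Theorem~\ref{thm:pullback}, which itself is proved using the constant-coefficient representatives furnished by the classification. In short, your proposal lacks the pullback-invariance theorem and the $M\hookrightarrow\RR^N$ extension lemma, which are the two devices that make the globalization step tractable.
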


The basic idea of the proof of Theorem~\ref{conj:angularity} is to  reduce the general  case to  $M=\RR^n$ by first showing that the class of angular curvature 
measures is invariant under pullback by isometric immersions. 
The latter is an immediate consequence of the following result that seems to be  also 
 of independent interest.  We call a smooth curvature measure $\Psi$ on $M$ angular at $p$ if $\tau_p\Psi$ is angular.

\begin{theorem} \label{thm:pullback}
 Let $f\colon M\to \overline M$ be an isometric immersion of Riemannian manifolds and let $p\in M$.  
 If $\Psi \in \calC(\overline M)$ is angular at $f(p)$, then  $f^*\Psi$ is angular at $p$.  
\end{theorem}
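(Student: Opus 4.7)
The plan is to reduce Theorem~\ref{thm:pullback} to an elementary statement about translation-invariant curvature measures on linear subspaces. The key technical point is to establish the following compatibility between $\tau$ and pullback: for an isometric immersion $f\colon M\to\overline M$ and $\Psi\in\calC(\overline M)$,
$$\tau_p(f^*\Psi) = r_V\bigl(\tau_{f(p)}\Psi\bigr),$$
where $V = df_p(T_pM)\subset T_{f(p)}\overline M$, identified with $T_pM$ via the isometry $df_p$, and $r_V\colon\Curv(T_{f(p)}\overline M)\to\Curv(V)$ is the restriction map sending a translation-invariant curvature measure $\Phi$ to its restriction to pairs $(K,U)$ with $K,U\subset V$.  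This compatibility is essentially tautological: $\tau_p$ records the behaviour of $\Psi$ on infinitesimal convex bodies at $p$, and an isometric immersion sends such infinitesimal bodies at $p$ isometrically into $V$.  At the level of the differential-form realisation of curvature measures on the cosphere bundle, this is the naturality of form pullback along the induced map $SM\to S\overline M$.

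Granted this identity, it suffices to show that the restriction map $r_V$ preserves angularity.  Suppose $\Phi\in\Curv(\RR^n)$ is angular with functions $f_k\colon\Grass_k(\RR^n)\to\RR$, and let $V$ be a $d$-dimensional linear subspace of $\RR^n$.  For any polytope $P\subset V$ and Borel set $U\subset V$ the $k$-faces of $P$ are the same whether $P$ is regarded as a polytope in $V$ or in $\RR^n$, and for each $k$-face $F\subset P\subset V$ the normal cone of $F$ in $P$ with respect to $\RR^n$ decomposes orthogonally as $N_V(F,P)\oplus V^\perp$.  Writing points of $S^{n-k-1}$ in polar coordinates with respect to this decomposition and integrating out the $V^\perp$-direction one obtains
$$\gamma_{\RR^n}(F,P) = \gamma_V(F,P).$$
Substituting into \eqref{eq:angular_nonhom} and setting $g_k = f_k|_{\Grass_k(V)}$ produces the angular formula for $r_V\Phi$ on $V$.

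Combining the two steps, if $\Psi$ is angular at $f(p)$ then $\tau_{f(p)}\Psi$ is angular, so $\tau_p(f^*\Psi)=r_V(\tau_{f(p)}\Psi)$ is angular on $T_pM$, i.e.\ $f^*\Psi$ is angular at $p$.  The main obstacle is the careful verification of the compatibility $\tau_p\circ f^* = r_V\circ\tau_{f(p)}$, which requires unwinding the construction of $\tau$ from \cite{bfs} and the precise definition of pullback of smooth curvature measures along an isometric immersion; once this naturality is in place, the preservation of angularity by $r_V$ reduces to the short spherical computation above.
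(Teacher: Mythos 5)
The identity on which your entire argument rests---$\tau_p(f^*\Psi) = r_V\bigl(\tau_{f(p)}\Psi\bigr)$---is false for a general isometric immersion, and this is not a verification detail but the whole mathematical content of the theorem. By Lemma~\ref{lemma:bfs}, $c_{\tau_p\Phi}(C)$ is obtained by evaluating $\Phi$ on small (but not shrinking) sets $\exp(C\cap B(0,\varepsilon))$. Computing $\tau_p(f^*\Psi)$ therefore requires evaluating the ambient $\Psi$ on the curved images $f\bigl(\exp^M(C\cap B(0,\varepsilon))\bigr)\subset\overline M$, whereas $r_V(\tau_{f(p)}\Psi)$ evaluates $\Psi$ on the geodesically flat sets $\exp^{\overline M}(C\cap B(0,\varepsilon))\subset\overline M$. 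These submanifolds with corners agree only to first order at $p$; their quadratic divergence is precisely the second fundamental form of the immersion, and it contributes to the normal cycle and hence to $\tau_p(f^*\Psi)$. The degree-by-degree version of your claim is Lemma~\ref{lemma:transfer restriction}, and that lemma only asserts that the \emph{leading} (degree-$k$) component of $\tau\circ\iota^*(\Psi)$ equals $(d\iota)^*\circ\tau\circ\pi_k(\Psi)$; the pullback generally produces additional components of higher degree in $\tau_p(f^*\Psi)$, and your $r_V$ has no way to see them.

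These second-fundamental-form contributions are exactly why the theorem is nontrivial and why the paper's proof must route through the classification Theorem~\ref{thm:ccc}. After Lemma~\ref{lemma:constant_coeff_form} supplies a representing form $\omega$ with constant-coefficient $\tau_p\omega$, the computation of $c_{\tau_p(\iota^*\Psi)}(C)$ produces horizontal lifts $\widetilde X_i = X_i + X_i^V$ whose vertical parts are governed by $\langle pr_{T_pM^\perp}\xi,\, h(X_i,X_j)\rangle$, see equation \eqref{eq:vertical_comp}; the resulting integrand is a genuine nonconstant polynomial in $pr_{T_pM^\perp}(\xi)$, and the constant-coefficient structure is what makes the integral over $C^\circ\cap D^n$ factor into $g(L)\gamma(C)$. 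Your Step 2, that $r_V$ preserves angularity via the ambient-independence of the external angle, is correct, but it accounts only for the leading-degree piece. The step you describe as ``essentially tautological'' is where all the difficulty lives, and your proposal---which uses no input from the classification of angular curvature measures---has no mechanism to control the curvature correction.
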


The proofs of Theorems~\ref{conj:angularity} and \ref{thm:pullback} rely on a complete classification of translation-invariant angular curvature measures on $\RR^n$,
Theorem~\ref{thm:ccc} below.
The analogous question for translation-invariant valuations on $\RR^n$ was asked in  \cite{crm_bcn}*{Problem 2.3.13}. In the case of curvature measures the 
problem  is equivalent to characterizing those functions $f$ on $\Grass_k(\RR^n)$ for which the weighted sums 
\begin{equation}\label{eq:angularity}\Phi(P,U)= \sum_{\dim F=k} f(\overline F) \gamma(F,P)\vol_k(F\cap U),\end{equation}
where  $P\subset\RR^n$ is a polytope and   $U\subset \RR^n$ a Borel set, extends to a  translation-invariant smooth curvature measure on $\RR^n$.

It is   a remarkable fact that many constructions of  central importance to  convex geometry associating to a convex body  $K\subset \RR^n$ an object such as a number, a measure or another convex body, 
all have the property of being valuations, i.e.,  additive in the sense that 
$$\Phi(K\cup L)+\Phi(K\cap L)= \Phi(K)+ \Phi(L)$$
whenever $K\cup L$ is again convex; see, e.g., \cites{ludwig_minkowski,ludwig_intersection,ludwig_HT, lr_affine, lyz_proj,lz_intersect, sw_even,sw_gen,schneider_kinematische,schneider_curvature, klain_short}.
Arguing as in  \cite{mcmullen_valuations} it is not difficult to see that  the expression \eqref{eq:angularity} is a valuation  for any function $f$.

Clearly, every angular curvature measure \eqref{eq:angularity} is even in the sense that 
$\Phi(-P,-U)= \Phi(P,U)$.
It is not difficult to see that for $k=n-1$ this is the only restriction  and thus \eqref{eq:angularity} extends for every  smooth function $f$  to a  smooth curvature measure.
Choosing $f$ to be constant yields the well-known curvature measures introduced by Federer \cite{federer}. Further examples of angular curvature measures of degree $k<n-1$ are harder to come by. As observed by Bernig, Fu, and Solanes \cite{bfs}*{Lemma 2.30}, 
a whole family of examples can be constructed as follows: if $\omega\in \largewedge^n(\RR^n\oplus\RR^n)^*\subset \Omega^n(\RR^n\oplus \RR^n)$ is a 
constant coefficient form, then 
$$\Phi(P,U)=  \int_{N_1(P)\cap \pi^{-1}(U)} \omega$$
is an angular smooth curvature measure. Here $N_1(P)\subset \RR^n\times D^n$ is the normal disc current of $P$ formed by the outward normals of $P$ of length at most $1$
(see Section~\ref{subsec:ccc} for details).
Following Bernig, Fu, and Solanes \cite{bfs} we call such curvature measures constant coefficient curvature measures. 
In the sense of currents $N_1(A)$ exists for a wide class of subsets of $A\subset \RR^n$, see \cites{fu_subanalytic,pr_wdc,fpr_wdc}.

Let  $\widetilde\Grass_k(\RR^n)$ denote the oriented Grassmannian, the manifold of oriented $k$-dimensional linear subspaces of $\RR^n$. 
We call a function on $\widetilde\Grass_k(\RR^n)$ even if it is invariant under change of orientation. 
Note that even functions on the oriented Grassmannian $\widetilde\Grass_k(\RR^n)$ correspond bijectively to functions on  $\Grass_k(\RR^n)$. 
The oriented Grassmannian smoothly embeds into the exterior power $\largewedge^k\RR^n$ as $E\mapsto \vec E$, where $\vec E=e_1\wedge \cdots \wedge e_k$ 
for some positively oriented orthonormal basis of $E$. This map is called the Pl\"ucker embedding.

\begin{theorem} \label{thm:ccc} Let $0\leq k <n-1$  be an integer and $f$ be a function on $\Grass_k(\RR^n)$.  Then 
\eqref{eq:angularity}
extends to a  translation-invariant smooth curvature measure on $\RR^n$ if and only if $f$
is the restriction of a $2$-homogeneous polynomial to the image of the Pl\"ucker embedding. \
Consequently, the space of translation-invariant angular curvature measures of degree $k$ has dimension 
 $$\frac{1}{n-k+1}\binom{n}{k}\binom{n+1}{k+1}$$
 and coincides with the space of constant coefficient curvature measures. 
\end{theorem}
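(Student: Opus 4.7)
The plan is to prove both directions separately, with the ``if'' direction by direct construction and the ``only if'' direction via a pointwise analysis of the smooth form followed by a harmonic-analysis globalization on the Grassmannian.

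For the ``if'' direction, given a $2$-homogeneous polynomial $Q$ on $\largewedge^k\RR^n$, let $B$ be the corresponding symmetric bilinear form. Using the Hodge star $\ast\colon\largewedge^k\RR^n\to\largewedge^{n-k}\RR^n$, transport one factor of $B$ to obtain a constant coefficient form $\omega\in\largewedge^k(\RR^n)^*\otimes\largewedge^{n-k}(\RR^n)^*\subset\largewedge^n(\RR^n\oplus\RR^n)^*$ whose value on $(\vec E,\vec{E^\perp})$ is a multiple of $Q(\vec E)$. A Fubini-type computation of $\int_{N_1(P)\cap\pi^{-1}(U)}\omega$ along each face contribution $F\times(N(F,P)\cap D^{n-k})$ then yields exactly the angular formula \eqref{eq:angularity} with $f(E)=\vol(D^{n-k})\,Q(\vec E)$. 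The construction is manifestly surjective onto polynomial restrictions, so every admissible $f$ arises from a constant coefficient form; this also establishes the last assertion of the theorem.

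For the ``only if'' direction, assume \eqref{eq:angularity} extends to a smooth translation-invariant curvature measure $\Phi$ and represent $\Phi$ by a smooth translation-invariant form $\omega\in\Omega^{n-1}(\RR^n\times S^{n-1})$. Isolate the bidegree $(k,n-1-k)$ component $\omega^{k,n-1-k}$, a smooth section of $\largewedge^k(\RR^n)^*\otimes\largewedge^{n-1-k}(v^\perp)^*$ over $v\in S^{n-1}$. Evaluating $\int_{N(P)\cap\pi^{-1}(U)}\omega$ face by face and matching the $k$-face contribution against \eqref{eq:angularity} produces a pointwise identity: for every $v\in S^{n-1}$ and every $E\in\Grass_k(v^\perp)$,
\[\omega^{k,n-1-k}|_v\bigl(\vec E,\,\ast_{v^\perp}\vec E\bigr)=c\,f(E),\]
where $\ast_{v^\perp}\vec E$ is the decomposable unit $(n-1-k)$-vector in $E^\perp\cap v^\perp$ and $c=\vol(S^{n-1-k})^{-1}$. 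Since the left-hand side is bilinear in its two arguments and $\ast_{v^\perp}$ is linear, this identity exhibits $f|_{\Grass_k(v^\perp)}$ as the pullback under the Plücker embedding of a $2$-homogeneous polynomial $Q_v$ on $\largewedge^k(v^\perp)$.

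The main obstacle, and the decisive step, is the local-to-global passage: the existence of such $Q_v$ for every hyperplane $W=v^\perp$ must force $f$ to extend to a single $2$-homogeneous polynomial on all of $\largewedge^k\RR^n$. I plan to address this via harmonic analysis on Grassmannians: decompose $L^2(\Grass_k\RR^n)$ into $SO(n)$-isotypic components, and identify the subspace of $2$-homogeneous polynomial restrictions with the sum of those irreducibles that constitute the degree-$2$ piece of the homogeneous coordinate ring of $\Grass_k\RR^n\hookrightarrow\PP(\largewedge^k\RR^n)$. Applying the $SO(n)\downarrow SO(n-1)$ branching rules, one shows that any extra isotypic component of $f$ would restrict to some hyperplane in a component \emph{outside} the analogous degree-$2$ subspace on $\Grass_k(W)$, contradicting the already-established $v$-wise quadratic property. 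Smoothness of $v\mapsto Q_v$, inherited from $\omega$, ensures the condition holds for all hyperplanes, completing the argument.

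The dimension formula then equals the dimension of $\Sym^2(\largewedge^k\RR^n)^*$ modulo the Plücker ideal in degree two, i.e.\ the degree-$2$ component of the projective coordinate ring of the Grassmannian; by the Borel--Weil theorem or a direct application of Weyl's dimension formula this evaluates to $\frac{1}{n-k+1}\binom{n}{k}\binom{n+1}{k+1}$. The coincidence of angular and constant coefficient curvature measures is then immediate from the ``if'' direction, since every admissible $f$ is already realized by a constant coefficient form.
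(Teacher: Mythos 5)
Your overall strategy matches the paper's: extract a pointwise quadratic identity from the representing form, then perform a local-to-global passage via harmonic analysis on Grassmannians, and close with a representation-theoretic dimension count. The pointwise identity you write, $\omega^{k,n-1-k}|_v(\vec E,\ast_{v^\perp}\vec E)=c\,f(E)$, is precisely the paper's equation \eqref{eq:2-homogeneous}, and the dimension computation via the degree-two piece of the Pl\"ucker coordinate ring (Weyl dimension formula) is what the paper does in Lemmas~\ref{lemma:dim ccc} and \ref{lemma:plucker}.

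However, your local-to-global step — the heart of the matter, corresponding to Proposition~\ref{prop:polynomial_restriction} in the paper — has a real gap as stated. You argue: decompose $f$ into $SO(n)$-isotypic pieces, suppose a piece with highest weight $(2m_1,\ldots)$, $|m_1|>1$, appears, and invoke $SO(n)\downarrow SO(n-1)$ branching rules to conclude that its restriction to some $\Grass_k(v^\perp)$ falls outside the quadratic subspace. But branching rules only describe which $SO(n-1)$-irreducibles \emph{may} occur in the abstract restriction of the $SO(n)$-module; they do not by themselves tell you which of those components survive under the concrete restriction map $R_v\colon C(\Grass_k(\RR^n))\to C(\Grass_k(v^\perp))$. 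Since $R_v$ is only $SO(n-1)$-equivariant, its kernel on a given $SO(n)$-irreducible $V_\lambda$ can be any $SO(n-1)$-invariant subspace; a priori $R_v$ could kill exactly the high-weight branching components, in which case no contradiction would arise. To close this gap you need to verify that the restriction actually \emph{does} land outside the quadratic subspace. The paper does this concretely: it uses Strichartz's explicit formula for the highest weight vectors $f_{m_1,\ldots,m_k}$ on $\widetilde\Grass_k(\RR^n)$ and evaluates them on the family of $k$-planes $E=\langle\cos\phi\,e_1+\sin\phi\,e_4\rangle\oplus\langle e_3,e_5,\ldots,e_{2k-1}\rangle\subset e_2^\perp$, obtaining $f_{m_1,\ldots,m_k}(E)=(\cos\phi)^{2m_1}$, visibly not the restriction of a quadratic polynomial on $\largewedge^k e_2^\perp$ once $|m_1|>1$. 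Some such explicit verification (or an alternative argument that $R_v$ is nondegenerate on the relevant component) is essential; abstract branching is not enough.

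One smaller note on the ``if'' direction: the map from constant coefficient forms $\omega\in\largewedge^k(\RR^n)^*\otimes\largewedge^{n-k}(\RR^n)^*$ to functions on the Grassmannian, $E\mapsto\omega(\vec E,\vec{E^\perp})$, has a nontrivial kernel; your Hodge-star construction exhibits surjectivity onto quadratic restrictions, but the equality of dimensions (paper's Lemma~\ref{lemma:dim ccc} vs.~Lemma~\ref{lemma:plucker}, both via Lemma~\ref{lemma:tensor decomp}) is still needed to conclude the identification of angular with constant coefficient curvature measures, so it is not purely a consequence of the ``if'' direction as your last sentence suggests.
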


\section{Preliminaries}

\subsection{Convex geometry}\label{sec:cones}
For later use we collect here  several facts  on convex cones and external angles. An excellent reference for this material is \cite{schneiderBM}.

\subsubsection{Intrinsic volumes} The intrinsic volumes of a convex body $A\subset \RR^n$  arise (up to normalization) in Steiner's formula 
\begin{equation}\label{eq:steiner} \vol_n(A_\varepsilon)= \sum_{k=0}^n \omega_{n-k} \varepsilon^{n-k} V_k(A), \qquad \varepsilon>0,\end{equation}
for the volume of an $\varepsilon$-neighborhood of a convex body. Here $\omega_i$ denotes the volume of the $i$-dimensional Euclidean unit ball. If $A\subset \RR^n$ is more generally a compact set of positive reach, then, as was shown by Federer~\cite{federer},  the expansion \eqref{eq:steiner} persits  for sufficiently small $\varepsilon>0$ and defines the intrinsic volumes of such $A$. 

\subsubsection{Convex cones}

Let $V$ be a finite-dimensional real vector space. If  $C\subset V$ is a closed convex cone, 
then  the largest linear subspace contained in $C$ is called the lineality space of $C$ and denoted by  $L(C)$. 
It is not difficult to see that $L(C)=C\cap (- C)$. If $L(C)=\{0\}$ then the cone is called pointed.  The polar cone to $C$ is
$$C^\circ =\{\xi\in V^* \colon \xi(x)  \leq 0  \ \text{for}\ x\in C\}.$$ 
A face of a convex set $A\subset V$ is a convex subset $F\subset A$ such
that  $x,y\in A$ and $(x + y)/2 \in F$ implies $x, y\in F$.
An extreme ray of $C$ is a ray that is also a face of $C$. An $n$-dimensional convex cone is called simplicial if it has precisely $n$ extreme rays. A cone is called polyhedral  if it is spanned by finitely many rays.

\begin{lemma}\label{lemma:triangulation}
For every  polyhedral cone  $C\subset V$ there exist  closed convex cones
$C_1,\ldots, C_m\subset V$ that have the same lineality space $L$ as $C$ such that 
$$C=C_1\cup \cdots \cup C_m,$$
each $pr_{V/L}(C_i)$ is simplicial where $pr_{V/L}\colon V\to V/L$ denotes the canonical projection, and
each   intersection $C_i\cap C_j$ is a face of both $C_i$ and $C_j$.
\end{lemma}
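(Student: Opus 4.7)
The plan is to reduce to the pointed case via the quotient $V/L$, where $L=L(C)$. Since $L\subset C$, the image $\overline C:=\mathrm{pr}_{V/L}(C)$ is a pointed polyhedral cone in $V/L$ and $C=\mathrm{pr}_{V/L}^{-1}(\overline C)$. If we can triangulate $\overline C$ into simplicial cones $\overline C_1,\dots,\overline C_m$ meeting in common faces, then setting
\[
C_i:=\mathrm{pr}_{V/L}^{-1}(\overline C_i)=\overline C_i+L
\]
yields the desired decomposition of $C$.

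First I would establish the pointed case as a separate step. Given a pointed polyhedral cone $K$ in some vector space $W$, choose a linear functional $\xi$ that is strictly positive on $K\setminus\{0\}$; such a $\xi$ exists by pointedness (take any interior point of $K^\circ$). Then $P:=K\cap\{\xi=1\}$ is a polytope, and every polytope admits a triangulation $P=\bigcup\Delta_j$ into geometric simplices whose pairwise intersections are common faces (this is standard; one may cite pulling triangulations, or iteratively coning from a vertex after induction on the number of vertices). Taking cones over these simplices from the origin produces simplicial cones $\overline C_j=\mathbb{R}_{\ge0}\cdot\Delta_j$ that cover $K$ and intersect in common faces, since faces of the simplices correspond to faces of the cones under the bijection $F\leftrightarrow\mathbb{R}_{\ge0}\cdot F$.

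Next I would lift this back to $V$. Writing $q:=\mathrm{pr}_{V/L}$, I set $C_i:=q^{-1}(\overline C_i)=\overline C_i+L$. Because $L\subset C_i$ and $q(C_i)=\overline C_i$ is pointed, one has $L(C_i)=q^{-1}(\{0\})=L$, verifying the lineality space condition. Moreover $C=\bigcup_i C_i$ since $q(C)=\bigcup_i\overline C_i$ and $C=q^{-1}(q(C))$. The fact that $q(C_i)$ is simplicial is immediate.

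The remaining point — and the only delicate one — is that $C_i\cap C_j$ is a face of each. Here I would use the identity $C_i\cap C_j=q^{-1}(\overline C_i\cap\overline C_j)$, which reduces the question to showing that taking $q^{-1}$ preserves the face relation. If $\overline F$ is a face of $\overline C_i$, then $F:=q^{-1}(\overline F)$ is a face of $C_i$: suppose $x,y\in C_i$ with $(x+y)/2\in F$; applying $q$ yields $q(x),q(y)\in\overline C_i$ and $(q(x)+q(y))/2\in\overline F$, so $q(x),q(y)\in\overline F$ and hence $x,y\in F$. Since $\overline C_i\cap\overline C_j$ is a common face of $\overline C_i$ and $\overline C_j$ by construction, its preimage is a common face of $C_i$ and $C_j$, completing the proof. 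The main obstacle, if any, is simply keeping straight that the pointed triangulation is a known fact about polytopes; the lifting argument is then purely formal.
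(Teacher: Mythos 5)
Your proof is correct and follows essentially the same route as the paper: pass to the pointed quotient $\overline C = \mathrm{pr}_{V/L}(C)$, slice with a hyperplane to get a polytope, triangulate it, cone over the simplices, and pull back under $\mathrm{pr}_{V/L}^{-1}$. You supply more detail than the paper does (explicit construction of the slicing hyperplane via an interior point of the polar, and an explicit check that $q^{-1}$ sends faces to faces), but the decomposition and the key reduction are identical.
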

\begin{proof}
Note that the cone $pr_{V/L}(C)$ is pointed. Hence there exists a hyperplane $H$ in $V/L$ such that $P=H\cap pr_{V/L}(C)$ is a convex polytope that spans $pr_{V/L}(C)$. Write $P=S_1\cup \cdots \cup S_m$ as a union of simplices such that $S_i\cap S_j$ 
is a face both $S_i$ and $S_j$.  Let $C'_i\subset V/L$ be the cone spanned by $S_i$. The cones $C_i=pr_{V/L}^{-1}(C_i')$ have the desired property. 
\end{proof}

The tangent cone of a polytope $P\subset V$ at $x\in P$ is the closed convex cone
$$T_x P = \mathrm{cl}\bigcup_{t>0} t(P-x)$$
The lineality space of   $T_x P$ is the linear subspace generated by the unique face $F$ of $P$ 
containing $x$ in its relative interior. Note also that $T_xP = T_y P $ if $x,y$ belong to the relative interior of the same face $F$;
we denote this common cone by $T_FP$.

\subsubsection{The external angle}
Let $\langle x ,y\rangle$ denote the Euclidean inner product on $\RR^n$, $|x|$ the corresponding norm, and $S^{n-1}$ the unit sphere. 
The external angle  of a closed convex cone $C\subset \RR^n$ equals the fraction of $L(C)^\perp$ taken up by $C^\circ$. More precisely, 
\begin{equation}\label{eq:exterior_angle} \gamma(C)=\frac{\vol_{n-k-1}(S^{n-1}\cap C^\circ)}{\vol_{n-k-1}(S^{n-k-1})}, \qquad \text{with }k=\dim L(C)\end{equation}
The external angle of a polytope $P$ at a face $F$ is denoted $\gamma(F,P)=\gamma(T_FP)$.
An important property of the external angle is that it is independent of the ambient space: 
If $C\subset \RR^m\subset \RR^n$, then $\gamma(C)$ is the same whether computed in $\RR^m$ or $\RR^n$. 

It is a well-known fundamental fact, see, e.g.,  \cite{sw_sig}*{Theorem 6.5.5}, that 
there exist constants $c_0,\ldots,c_n$  such that 
$$\vol_{n-1}(S^{n-1}\cap C^\circ)= \sum_{k=0}^{n-1} c_k V_k(C\cap S^{n-1})+c_n$$
 for every closed convex cone $C\subset \RR^n$. This was first proved by McMullen~\cite{mcmullen_angle-sum} and was apparently independently discovered by 
 Milnor \cite{milnor}*{p. 213}.
It follows that the external angle is continuous if the distance between two cones $C_1,C_2$ is defined as 
the Hausdorff distance between $C_1\cap S^{n-1}$ and $C_2\cap S^{n-1}$ and that the external angle is finitely additive in the sense that 
\begin{equation}\label{eq:inclusion_exclusion}\gamma(C_1\cup\cdots \cup C_m)= \sum_{j=1}^m(-1)^{j-1}  \sum_{1\leq i_1<\cdots <i_j\leq m}\gamma(C_{i_1}\cap \cdots \cap C_{i_j}). \end{equation}
 whenever $C_1,\ldots, C_m$ have the same lineality space and $C_1\cup\cdots \cup C_m$ is convex.

\subsection{Curvature measures and valuations} \label{sec:val}

Although the theory of smooth valuations on a manifold $M$ developed by Alesker \cites{valmfdsI,valmfdsII,valmfdsIII,valmfdsIV,valmfdsIG}
is entirely independent of  orientation or orientability,
it will be convenient to assume that $M$ is oriented.
Since Theorem~\ref{conj:angularity} and Theorem~\ref{thm:pullback} are  local statements, this will result in no loss of generality.

\subsubsection{Normal cycles and smooth valuations} Let $M$ be an oriented smooth manifold of dimension $n$. 
 The (co-)normal cycle of a closed submanifold with corners  $P\subset M$  is  a 
canonically oriented Lipschitz submanifold of the cosphere bundle $SM$ 
of $M$; as a set 
 $$ N(P) = \coprod_{p\in P} (T_p P)^\circ\setminus\{0\}/\sim, $$
 where $T_p P\subset T_pM$ denotes the tangent cone of $P$ at $p$, $(T_p P)^\circ\subset T_p^*M$ is its polar cone, and
 $\xi\sim \eta$ if and only if $\xi = \lambda \eta$ for some positive real number $\lambda$.
 If $M$ carries a Riemannian metric we will usually work with the sphere bundle instead of cosphere bundle. 
  An important property of the normal cycle is finite additivity  when regarded as an $(n-1)$-current,
 \begin{equation}\label{eq:nc add}N(A\cup B)= N(A) + N(B) - N(A\cap B)\end{equation}
where the existence of normal cycles for any three of $A, B, A \cap B, A \cup B$ implies its existence
for the fourth.

To each pair $(\psi, \phi)\in \Omega^{n-1}(SM) \oplus \Omega^{n}(M)$  one may assign the  smooth curvature measure $\Psi$ that associates to every closed
submanifold with corners $P$ the signed measure 
\begin{equation}\label{eq:def curv}
\Psi(P,U)= \int_{N(P)\cap \pi^{-1}(U)} \psi  +  \int_{P\cap U} \phi,
\end{equation}
where $U\subset M$ is a Borel subset and $\pi\colon SM\to M$ denotes the canonical projection. 
The space of all such curvature measures on $M$ is denoted by $\calC(M)$.
Such a pair $(\psi,\phi)$ determines also  a  valuation given by $\mu(P)=  \int_{N(P)} \psi  +  \int_{ P} \phi $ for compact submanifolds with corners $P$.
The space of all such set functions is denoted by $\calV(M)$. We denote $\glob\colon \calC(M)\to\calV(M)$ the globalization map  $(\glob\Psi)(P)=\Psi(P,P)$.

It was first observed by Z\"ahle   \cite{zahle} that the  Federer's curvature measures may  be described in terms of 
canonical  invariant differential forms $\kappa_0,\kappa_1,\ldots,\kappa_{n-1}$ on the sphere bundle of $\RR^n$,
$$\Phi_i(P,U) = \int_{N(P)\cap \pi^{-1}(U)} \kappa_i.$$ 
Thus they are smooth curvature measures and their globalizations $V_i= \glob \Phi_i$, the intrinsic volumes, are smooth valuations.

The normal cycle  is a device from Geometric Measure Theory to extend Federer's theory of curvature measures \cite{federer} to sets
too singular for tubular approximation to work \cites{fu_subanalytic, fu_sandbjerg,pr_wdc,fpr_wdc}. As a consequence, smooth valuations and curvature measures 
may be evaluated on subsets of $M$ much more general than compact submanifolds with corners. 
Smooth valuations 
and curvature measures on $\RR^n$ may  in particular  be evaluated on convex bodies and  turn out to be continuous  valuations in the classical sense of convex geometry.
We have the following deep result of Alesker: 
\begin{theorem}[\cite{alesker_irred}]
 Any translation-invariant continuous valuation on convex bodies in $\RR^n$ may be approximated uniformly on compact sets by translation-invariant smooth valuations. 
\end{theorem}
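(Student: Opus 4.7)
The plan is to deduce the density statement from Alesker's irreducibility theorem for the natural continuous $GL(n,\RR)$-action on the Banach space $\Val(\RR^n)$ of translation-invariant continuous valuations (equipped with the topology of uniform convergence on compact families of convex bodies).

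First, I would reduce via McMullen's decomposition $\Val = \bigoplus_{k=0}^n \Val_k$ into $k$-homogeneous components, each of which splits further as $\Val_k = \Val_k^+ \oplus \Val_k^-$ according to parity under $\mu \mapsto \mu(-\cdot)$. Each summand is a closed $GL(n,\RR)$-invariant subspace of $\Val$, so it suffices to prove density in each. Since smoothness of a valuation is a $GL(n,\RR)$-equivariant condition, the closure $V_k^\pm$ of the smooth translation-invariant valuations inside $\Val_k^\pm$ is itself a closed $GL(n,\RR)$-invariant subspace; and it is non-zero in every bidegree, because concrete smooth examples exist in each component (for instance, the intrinsic volumes in the even case, and, more generally, integrals of invariant constant-coefficient differential forms on the sphere bundle of $\RR^n$ as described in Section~\ref{sec:val}, or mixed volumes against fixed smooth convex bodies).

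Second, I would invoke the central fact that each $\Val_k^\pm$ is \emph{topologically irreducible} as a $GL(n,\RR)$-representation: its only closed invariant subspaces are $\{0\}$ and the whole space. Density is then immediate, since $V_k^\pm$ is a nonzero closed invariant subspace and must therefore coincide with $\Val_k^\pm$. To establish irreducibility, one realizes the valuation spaces concretely by the Klain embedding $\Val_k^+ \hookrightarrow C(\Grass_k(\RR^n))$ in the even case (sending a valuation to its density function on the Grassmannian via restriction to $k$-planes) and the analogous Schneider embedding in the odd case. One then has to analyze the $GL(n,\RR)$-orbit of an arbitrary nonzero Klain (resp.\ Schneider) function and show that it generates a dense subspace of the image of $\Val_k^\pm$.

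The main obstacle is this irreducibility statement, which is the genuinely deep content. The representation is non-unitary and the group non-compact, so no soft averaging argument is available; the even case is handled using the geometry of the cosine transform between Grassmannians together with a careful analysis of the Klain image, while the odd case requires a substantially more intricate inductive argument via partial flag manifolds. A softer alternative route, which yields density but not the sharper irreducibility, is to view $\Val$ as a continuous Banach $GL(n,\RR)$-representation, identify Alesker's smooth valuations with the smooth vectors of this representation, and apply G\r{a}rding's theorem on density of smooth vectors.
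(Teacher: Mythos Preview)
The paper does not prove this theorem at all; it is merely quoted from \cite{alesker_irred} as background. So there is no ``paper's own proof'' to compare against, and your task reduces to whether your outline is a valid derivation of the density statement.

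Your argument is essentially correct, and in fact both routes you describe work. A few remarks. First, the cleaner path is the one you relegate to a final aside: once one knows that the translation-invariant smooth valuations in the sense of this paper (those represented by pairs of differential forms) coincide with the $GL(n,\RR)$-smooth vectors of the Banach representation $\Val$, density is immediate from G\r{a}rding's theorem, and irreducibility is not needed. This identification of the two notions of smoothness is itself a theorem of Alesker, however, so you should flag it explicitly rather than treat it as a definition. Second, in the irreducibility route you must exhibit a nonzero smooth element in each $\Val_k^\pm$; your even examples are fine, but note that intrinsic volumes and mixed volumes with centrally symmetric reference bodies are even, so for the odd components you should point to, e.g., integration of a suitable translation-invariant form on $S\RR^n$ that is not invariant under the antipodal map in the fiber, or to a mixed volume against a non-symmetric smooth body. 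Third, your description of how irreducibility is actually proved (Klain/Schneider embeddings, cosine transform, etc.) is accurate as a sketch of \cite{alesker_irred}, but of course that is the substance being cited, not something you would reprove here.
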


\subsubsection{Algebraic structures on smooth valuations}

One of the salient features of the space of smooth valuations on a manifold is that it has naturally the structure of a filtered, commutative algebra with a multiplicative identity. 

If $f:M \to \overline M$ is a smooth  embedding then the pullback maps 
$\calC(\overline M) \to \calC(M)$ and $\calV(\overline M) \to \calV(M)$, both of which we denote by $f^*$, are defined by
\begin{equation}\label{eq:pullback}
(f^*\Psi) (P, U) := \Psi(f(P),f(U)),\quad (f^*\mu)(P): = \mu(f(P)).
\end{equation}
Since smooth valuations and curvature measures are sheaves and locally every smooth immersion is an embedding, this defines the pullback also for smooth immersions.
The main properties of the Alesker product of valuations that we use in this paper are summarized in the following Theorem. For a gentler introduction to the subject we 
recommend \cite{crm_bcn}.

\begin{theorem}[\cites{valmfdsIII, valmfdsIG,ab_product, bfs, fu_intersection}]\label{thm:product} Let $M$ be a smooth manifold. 
\begin{enumerate}
\item The space $\calV(M)$ admits a natural commutative multiplication (the Alesker product), 
with the Euler characteristic $\chi$ acting as the multiplicative identity. 
Furthermore $\calV(M)$ acts on $\calC(M)$ in a natural way, compatible with the product of valuations, i.e.,
if $\mu \in \calV(M), \Psi \in \calC(M)$ then $\glob(\mu \cdot \Psi) = \mu \cdot\glob(\Psi)$. If $f$ is a smooth  immersion
as above then $f^*$ is an algebra and module homomorphism, i.e. if $\mu, \nu \in \calV(\overline M), \Psi \in \calC(\overline M)$ then
$$
(f^*\mu)\cdot (f^*\nu) = f^*(\mu \cdot \nu), \quad (f^*\mu)\cdot (f^*\Psi) = f^*(\mu \cdot \Psi). 
$$
\item \label{item:product kinematic} Suppose $X\subset M$ is a compact submanifold with corners, and $T\times M \to M$ 
is a smooth proper family of diffeomorphisms $\varphi_t:M\to M, \ t \in T$, equipped with a smooth measure $dt$. 
Suppose further that the map $T \times S^*M \to S^*M$, induced by the derivative maps $\varphi_{t*}:S^*M \to S^*M$, is a submersion.
 Then $\mu(P)= \int_T \chi( \varphi_t(X) \cap P) \, dt$ defines a smooth valuation on $M$. Given $\nu \in \calV(M), \Psi \in \calC(M)$ we have
\begin{align*}
(\mu \cdot \nu) (P) &= \int_T  \nu(\varphi_t(X) \cap P) \, dt, \\
(\mu \cdot \Psi) (P,E)  &= \int_T  \Psi(\varphi_t(X) \cap P,E) \, dt. 
\end{align*}

\end{enumerate}
\end{theorem}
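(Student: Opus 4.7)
The plan is to establish each claim by working locally on $M$ and invoking the constructions of \cites{valmfdsIII, valmfdsIG, ab_product}. The essential representation to use is that every smooth valuation $\mu\in\calV(M)$ can be written as $\mu(P)=\int_P\phi+\int_{N(P)}\psi$ for a pair $(\phi,\psi)\in\Omega^n(M)\oplus\Omega^{n-1}(S^*M)$, determined up to the Bernig-Br\"ocker kernel, which is characterized by the Rumin operator $D$ on the contact manifold $S^*M$. Smooth curvature measures admit the same description with $P$ and $N(P)$ replaced by their intersections with $U$ and $\pi^{-1}(U)$.

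For part (1), I would define the Alesker product on representing pairs via the Alesker-Bernig formula: lift two factors to $S^*M\times S^*M$, wedge with a specific form built from the two contact structures, and push the result down to $S^*M$ by fiber integration. Commutativity is manifest from the symmetry of the formula; associativity, and the identity property of $\chi$, are verified by direct computation on a convenient representative (on $\RR^n$ essentially a constant multiple of the volume form on $SM$). The module structure on $\calC(M)$ is obtained by carrying the Borel set $U$ as a parameter through the same construction, and compatibility with $\glob$ is automatic because globalization is the evaluation $U=P$, which commutes with the fiber integrals entering the definition of the product. Compatibility of the product and module structure with pullback by a smooth immersion $f$ reduces locally to the case of an embedding, and then follows from the naturality of all the underlying operations (exterior derivative, wedge, fiber integration along $S^*M\to M$, and the Rumin differential) under $f^*$, combined with the identity $N(f(P))=f_*N(P)$ under the embedding of cosphere bundles induced by $f$.

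For part (2), I would appeal to the intersection-theoretic interpretation of the Alesker product from \cite{fu_intersection}. The submersion hypothesis on $T\times S^*M\to S^*M$ is exactly what guarantees that the Legendrian currents $N(\varphi_t(X))$ depend smoothly and transversally on $t\in T$, so that fiber integration along $T$ produces a smooth current on $S^*M$ representing a smooth valuation; this is the valuation $\mu$. To derive the product formula, one observes that for $t$ in an open dense subset where $\varphi_t(X)$ meets $P$ transversally, one has $N(\varphi_t(X)\cap P)=N(\varphi_t(X))\cap N(P)$ as Legendrian currents, and by the geometric description of the Alesker product from part (1) this intersection current represents the integrand of $(\mu\cdot\nu)(P)$ pointwise in $t$. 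Integrating over $T$ and using continuity to absorb the non-transverse locus gives the first identity; the same argument, now carrying a Borel set $U$ as an additional parameter, yields the module version.

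The hard part will be the descent of the Alesker product to the quotient by the Bernig-Br\"ocker kernel: verifying that the explicit pair-level formula sends kernel into kernel is the analytic core of \cite{ab_product} and requires nontrivial identities for the Rumin operator $D$ on $S^*M$. A secondary but also technical point is the transversality and wave-front-set analysis needed to make the current intersections and fiber integrals in part (2) rigorous; this is handled in \cites{valmfdsIII, fu_intersection}. Once these foundational points are in place, commutativity, the identity, the module compatibility, and naturality under pullback are essentially formal.
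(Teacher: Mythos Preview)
The paper does not give a proof of this theorem: it is stated as a summary of results from the cited references \cites{valmfdsIII, valmfdsIG, ab_product, bfs, fu_intersection}, with no accompanying argument. Your proposal is therefore not to be compared against anything in the paper itself; it is essentially an outline of the constructions carried out in those references. As such an outline it is broadly accurate---the Alesker--Bernig explicit formula, the Bernig--Br\"ocker kernel description, and the transversality/current-intersection picture from \cite{fu_intersection} are indeed the ingredients---and you correctly identify the two genuinely hard points (descent modulo the kernel, and the microlocal analysis underlying part~(2)). For the purposes of this paper, however, the appropriate ``proof'' is simply the citation.
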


The Crofton formula \cite{federer}, a classical fact from integral geometry, expresses the $k$th intrinsic volume of a compact submanifold
with corners $P\subset \RR^n$ as 
an integral over the affine Grassmannian of $(n-k)$-planes in $\RR^n$
$$ V_k(P) = \int_{\overline{\Grass}_{n-k}} \chi(P\cap \overline E) \; d\overline E,$$
where $d\overline E$ is a suitably normalized Haar measure.

The next lemma is an immediate consequence of  Theorem~\ref{thm:product}, item \eqref{item:product kinematic}. 
Since it is important for our proof of Theorem~\ref{conj:angularity}, but
does not seem to have been explicitly stated in the literature, we give a proof. 
\begin{lemma} For every smooth curvature measure  $\Psi\in\calC(\RR^n)$
\begin{equation}\label{eq:muk action} (V_k \cdot \Psi) (P,U)=\int_{\overline{\Grass}_{n-k}} \Psi(P\cap \overline E, U) \; d\overline E\end{equation}
for all compact submanifolds with corners $P$ and Borel subsets $U\subset \RR^n$.
In particular, up to a constant $V_k$ equals  the $k$th power of $V_1$.
\end{lemma}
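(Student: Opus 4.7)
The plan is to apply Theorem~\ref{thm:product}\eqref{item:product kinematic} to the kinematic representation of $V_k$ given by Crofton's formula, and then to derive the second claim by iteration of the case $k=1$. The only technical issue is that Crofton's formula uses the non-compact planes $\overline E$, while Theorem~\ref{thm:product}\eqref{item:product kinematic} demands a compact test submanifold $X$. This is resolved by a standard truncation using the compactness of $P$.

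Fix a compact submanifold with corners $P\subset B(0,R)$, pick a reference $(n-k)$-plane $E_0$, and set $X:=E_0\cap \overline{B}(0,2R)$. Let $T$ be the isometry group of $\RR^n$, equipped with a Haar measure $dt$ normalized so that the push-forward along $t\mapsto \varphi_t(E_0)\in \overline{\Grass}_{n-k}$ coincides with the Crofton measure $d\overline E$. For any $t$ with $\varphi_t(X)\cap P\neq \emptyset$, the inclusion $P\subset B(0,R)$ forces $\varphi_t(E_0)\cap P\subset \varphi_t(X)$, whence $\varphi_t(X)\cap P=\varphi_t(E_0)\cap P$. Crofton's formula therefore gives $V_k(P)=\int_T \chi(\varphi_t(X)\cap P)\,dt$. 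The submersion hypothesis holds (isometries act transitively on $S^*\RR^n$), so Theorem~\ref{thm:product}\eqref{item:product kinematic} yields
$$(V_k\cdot \Psi)(P,U)=\int_T \Psi(\varphi_t(X)\cap P, U)\,dt=\int_{\overline{\Grass}_{n-k}} \Psi(\overline E\cap P, U)\,d\overline E,$$
the second equality again by the truncation identity.

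For the ``in particular'' statement, iterate the case $k=1$ of the formula just derived:
$$(V_1^k\cdot \Psi)(P,U)=\int\cdots\int \Psi\bigl(P\cap \overline H_1\cap\cdots\cap \overline H_k, U\bigr)\,d\overline H_1\cdots d\overline H_k.$$
On a full-measure open subset of $(\overline{\Grass}_{n-1})^k$ the map $(\overline H_1,\ldots,\overline H_k)\mapsto \overline H_1\cap\cdots\cap \overline H_k$ is well-defined, equivariant under the isometry group, and surjective onto $\overline{\Grass}_{n-k}$; the push-forward of the product Crofton measure is therefore an isometry-invariant measure on $\overline{\Grass}_{n-k}$, and so must be a scalar multiple of $d\overline E$. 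Comparing with the formula of the previous paragraph yields $V_1^k\cdot \Psi=c\,V_k\cdot \Psi$ for every $\Psi\in \calC(\RR^n)$; globalizing (equivalently, taking $\Psi$ to be Federer's $\Phi_0$ and using $\glob\Phi_0=\chi$) gives $V_1^k=c\,V_k$ as valuations.

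The principal technical obstacle is the truncation and Haar-measure normalization in the first step: one must verify that the constant relating the two normalizations is absorbed consistently and that the family of isometries satisfies the properness/submersion hypotheses. Everything else is a direct application of the machinery provided by Theorem~\ref{thm:product}.
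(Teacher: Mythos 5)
Your proposal follows the same basic strategy as the paper (apply Theorem~\ref{thm:product}\eqref{item:product kinematic} to a kinematic representation of $V_k$), but the truncation step, which you yourself flag as the crux, contains a genuine gap, and the paper handles this point very differently: by a rescaling limit rather than a truncation.

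The claim that $\varphi_t(X)\cap P\neq\emptyset$ together with $P\subset B(0,R)$ forces $\varphi_t(E_0)\cap P\subset\varphi_t(X)$ is false. Since $\varphi_t$ is an isometry, $\varphi_t(X)=\varphi_t(E_0)\cap \overline B(\varphi_t(0),2R)$. Take $E_0$ a line through the origin, $P=\overline B(0,R)$, and $\varphi_t$ the translation by $1.9R$ along $E_0$; then $\varphi_t(X)\cap P\neq\emptyset$, but the point $(-R,0,\ldots,0)\in\varphi_t(E_0)\cap P$ lies at distance $2.9R>2R$ from $\varphi_t(0)$ and so is not in $\varphi_t(X)$. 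No fixed radius $\rho$ fixes this: if $\varphi_t(X_\rho)\cap P\neq\emptyset$ one only gets $|\varphi_t(0)|<R+\rho$, hence points of $\varphi_t(E_0)\cap P$ may lie at distance up to $2R+\rho>\rho$ from $\varphi_t(0)$. There is also a second, more structural problem: the stabilizer of $E_0$ in the isometry group $E(n)$ contains the full group of translations inside $E_0$ and is therefore non-compact, so the push-forward of Haar measure under $t\mapsto\varphi_t(E_0)$ assigns infinite mass to every plane, and the normalization you ask for does not exist. Underlying both issues is the fact that for any fixed compact $X$ the valuation $P\mapsto\int_T\chi(\varphi_t(X)\cap P)\,dt$ is, by the kinematic formula, a \emph{mixture} of all intrinsic volumes $V_0,\ldots,V_k$, never $V_k$ alone. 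This is precisely why the paper introduces $\varphi(P)=\int_{SO(n)\times\RR^k}\chi(P\cap g(D^{n-k}+x))\,dg\,dx$ with translations confined to a transversal $\RR^k$ (so that the stabilizer issue disappears), and then isolates $V_k$ by showing $\varepsilon^{-k}\varphi(\varepsilon\,\cdot\,)\to V_k$ in $\calV(\RR^n)$ and passing to the limit in the product formula using continuity of the Alesker product. Your iteration argument for the ``in particular'' clause is fine as far as it goes, but it rests on the $k=1$ case of the main formula, so it inherits the gap.
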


\begin{proof}Let $D^{n-k}\subset \RR^{n-k}\subset \RR^{n-k}\oplus \RR^k$ denote the unit disc and put 
$$\varphi(P)= \int_{SO(n)\times \RR^k} \chi(P\cap g(D^{n-k} +x))\, dg\,dx.$$
By Theorem~\ref{thm:product}, item \eqref{item:product kinematic}, we have $\varphi\in \calV(\RR^n)$. 
One immediately checks that 
$\varepsilon^{-i} \varphi(\varepsilon P)\to 0$ for every integer $0\leq i<k$ and  
$\varepsilon^{-k} \varphi(\varepsilon P)\to V_k(P)$
as $\varepsilon\to 0$. Repeating the argument of \cite{sw_spheres}*{Section 3}, it follows that $\varepsilon^{-k}\varphi(\varepsilon\,\cdot\,)\to V_k$ in $\calV(\RR^n)$. 
By the continuity of the Alesker product and Theorem~\ref{thm:product}, item \eqref{item:product kinematic} we have
\begin{align*}(V_k\cdot \nu) (P)& = \lim_{\varepsilon\to 0} \int_{SO(n)\times \RR^k} \nu(P\cap g(\varepsilon^{-1} D ^{n-k} +x))\, dg\,dx\\
& = \int_{\overline{\Grass}_{n-k}} \nu(P\cap \overline E) \; d\overline E
\end{align*}
for every smooth valuation $\nu$. The corresponding statement for curvature measures now follows by approximating the indicator function of $U$ by smooth functions, see \cite{bfs}.  
\end{proof}

\subsubsection{Intrinsic volumes on Riemannian manifolds}

A fundamental fact  from convex geometry is that  the intrinsic  volumes of a convex body are independent of the ambient space: If $K\subset \RR^m\subset \RR^n$
 then  $V_i(K)$ is the same whether computed in $\RR^m$ or $\RR^n$, see, e.g., \cite{klain_rota}. A generalization of this  is the theorem Weyl \cite{weyl_tubes}
 that  the  intrinsic volumes of a compact submanifold with corners $P$ depend only  on the  metric induced on $P$. As first observed by Alesker  \cite{valmfds_survey}, this in combination 
 with the embedding theorem of Nash, yields the existence of smooth valuations $V_i^M\in\calV(M)$ on $M$, called intrinsic volumes or Lipschitz-Killing valuations,
 characterized by the property that 
 for every isometric embedding $f\colon M\to \RR^N$
 $$V_i^M  = f^*V_i,$$
 where $V_i$ the $i$th intrinsic volume on $\RR^N$. Let $\calL\calK(M)$ denote the span of the intrinsic volumes on $M$. 
 Since the pullback of valuations is a morphism of algebras, we have the following
 \begin{theorem}[\cite{valmfds_survey}]
  The map $t\mapsto V_1^M$ descends to an isomorphism of algebras from the truncated polynomial algebra $\RR[t]/ (t^{\dim M+1})$  to $\calL\calK(M)$.  
 \end{theorem}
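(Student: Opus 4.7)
The plan is to combine the lemma above---which in $\RR^N$ yields $V_k=c_k V_1^k$ with $c_k\ne 0$---with the fact that pullback under an isometric embedding is an algebra homomorphism (Theorem~\ref{thm:product}).

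Fix an isometric embedding $f\colon M\to\RR^N$ from Nash's theorem, and let $n=\dim M$. Since $V_i^M=f^*V_i$ and $f^*$ preserves the Alesker product,
$$V_k^M \;=\; f^*V_k \;=\; c_k\, f^*(V_1^k) \;=\; c_k (V_1^M)^k, \qquad k=0,1,\ldots,n.$$
Consequently the unique algebra homomorphism $\varphi\colon\RR[t]\to\calV(M)$ extending $t\mapsto V_1^M$ surjects onto $\linspan\{V_0^M,\ldots,V_n^M\}=\calL\calK(M)$.

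To descend $\varphi$ modulo $(t^{n+1})$, I must show $(V_1^M)^{n+1}=0$. Taking $N\ge n+1$ in the Nash embedding, the same identity applied with $k=n+1$ gives $c_{n+1}^{-1}(V_1^M)^{n+1}=f^*V_{n+1}$, which vanishes by Weyl's tube formula: for an $n$-dimensional smooth submanifold of $\RR^N$ with $n<N$, the expansion of the volume of an $\varepsilon$-tube has only $n+1$ nontrivial terms, so the Euclidean intrinsic volume $V_j$ vanishes on every such submanifold when $j>n$.

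For injectivity of the induced map $\RR[t]/(t^{n+1})\to\calL\calK(M)$, I would establish linear independence of $V_0^M,\ldots,V_n^M$ by restricting to small geodesic balls $B_p(r)$ at some point $p\in M$; the Weyl tube formula applied to $f(B_p(r))\subset\RR^N$ yields an expansion $V_k^M(B_p(r))=\alpha_k r^k+O(r^{k+1})$ as $r\to 0^+$ with $\alpha_k>0$, and matching orders of vanishing in $r$ of a relation $\sum a_k V_k^M=0$ forces every $a_k$ to be zero.

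The principal obstacle is the vanishing assertion $f^*V_{n+1}=0$, which requires a slightly sharper form of Weyl's theorem than the one usually invoked---namely, that Euclidean intrinsic volumes of degree strictly greater than the submanifold dimension vanish identically. Once this is in hand, both the algebraic identification $V_k^M=c_k(V_1^M)^k$ and the asymptotic linear independence argument are essentially formal.
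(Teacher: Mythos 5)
Your proposal is correct and follows the same route the paper indicates: the paper's implicit argument is exactly the combination of the lemma $V_k=c_k V_1^k$ on $\RR^N$ (with $c_k=\tfrac{2^k}{k!\omega_k}\neq 0$) with the fact that pullback under an isometric embedding is an algebra homomorphism, and you correctly fill in the remaining details (vanishing of $f^*V_{n+1}$ by Weyl's tube formula, and linear independence of $V_0^M,\dots,V_n^M$ via the scaling asymptotics on small geodesic balls) that the paper leaves to the cited reference.
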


 For a construction of the intrinsic volumes on a Riemannian manifold that does not rely on the existence of isometric embeddings of $M$ into Euclidean space 
 and a generalization of the above result, see 
 \cite{fw_Riemannian}; for applications to integral geometry of isotropic spaces see \cites{bfs,fw_Riemannian, sw_spheres}; 
 for a similar  notion in  pseudo-Riemannian and
 contact geometry see \cites{bf_indefinite, faifman_crofton, faifman_contact}.

\subsection{Fiber integration} 

 Let $\pi\colon  E\to B$ be a smooth fiber bundle with fibers $F_p= \pi^{-1}(p)$. Let $n$ denote the dimension of the base $B$ and $r$ the dimension of the fibers. 
 If $\alpha$ is a smooth differential form on $E$ of degree $k\geq r$,
 then the restriction of  $\alpha$ to the fiber $F_p$ is a smooth $r$-form $r_p(\alpha)$ on $F_p$ with values in $\largewedge^{k-r} T_p^*B$ defined by
 $$r_p(\alpha)(X_1,\ldots, X_{k-r})= \left.\left( \widetilde X_1\wedge \cdots \wedge \widetilde X_{k-r} \lrcorner \alpha \right) \right|_{F_p}$$
 where $X_1,\ldots, X_{k-r}\in T_p B$ and $\widetilde X_i$ is an arbitrary lift of $X_i$ to $F_p$. 
 If $\alpha$ has degree degree less than $r$, then $r_p(\alpha)=0$ by definition. 
 
If $\pi_E\colon E\to B$ and $\pi_F\colon F\to C$ are fiber bundles with fibers of the same dimension, and $\varphi\colon E\to F$ is a bundle map,
i.e., $\pi_F\circ \varphi 
= f\circ \pi_E$ for some smooth map $f\colon B\to C$, then 
\begin{equation}\label{eq:restriction_bundle_map} (\varphi^* \otimes f^* )(r _{f(p)}\alpha) = r_p(\varphi^*\alpha).\end{equation}

 Let $P\subset M$ be a compact submanifold with corners of  dimension $d$.  By definition, there exists 
 for each $q\in P$  an open neighorhood $U$ of $q$ in $M$ and 
 a diffeomorphism $\varphi \colon U\to \RR^n$ such that 
 $\varphi(q)=0$ and 
 $$\varphi(P\cap U)= [0,\infty)^{d-k}\times \RR^{k}\times 0_{\RR^{n-d}}.$$
 The integer $0\leq k\leq n$ is called the type of $q\in P$; the type of a point does not depend on the choice of chart.
 
 \begin{lemma}\label{lemma:fiber_int} Let $M$ be a Riemannian manifold, $P\subset M$ a compact submanifold with corners, and let $F\subset M$ be an oriented $k$-dimensional embedded submanifold consisting only of points of $P$ of type $k$.
Let $j\colon \nu F\to SM$ denote the inclusion of the unit normal bundle to $F$ into $SM$. 
If   $\omega\in \Omega^{n-1}(SM)$, then 
$$ \int_{N(P)\cap\pi^{-1}(p)} r_p(j^*\omega)$$
depends smoothly on $p\in F$ and 
 $$\int_{N(P)\cap \pi^{-1}(F)} \omega = \int_F  \int_{N(P)\cap\pi^{-1}(p)} r_p(j^*\omega).$$
Here $r_p(j^*\omega)$ denotes the 
restriction of $j^*\omega$ to the fibers of $\nu F$. 
\end{lemma}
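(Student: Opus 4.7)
The overall strategy is to reduce the lemma to the classical Fubini theorem for top-degree forms on fiber bundles; the type-$k$ hypothesis is precisely what endows the slice $E := N(P) \cap \pi^{-1}(F)$ of the normal cycle with the smooth fiber bundle structure needed to apply Fubini. For any $p_0 \in F$ there is, by definition of type, a chart $\varphi \colon U \to \RR^n$ with $\varphi(p_0) = 0$ and $\varphi(P \cap U) = [0,\infty)^{d-k} \times \RR^k \times 0$. Since every point of $F$ has type $k$, this chart necessarily satisfies $\varphi(F \cap U) = 0 \times \RR^k \times 0$, so the tangent cone $T_pP$ is constant in these coordinates as $p$ varies over $F \cap U$. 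Consequently the polar cones $(T_pP)^\circ$ and their sphere-sections vary smoothly, and $E$ is a smooth oriented subbundle of the unit normal bundle $\nu F \to F$, with $(n-k-1)$-dimensional fibers.

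Next, since $\dim \nu F = k + (n-k-1) = n-1$, the form $j^*\omega$ is top-degree on $\nu F$. By the definition of pullback along the embedding $j$,
$$\int_{N(P) \cap \pi^{-1}(F)} \omega = \int_E j^*\omega,$$
with $E$ viewed as an oriented $(n-1)$-dimensional submanifold of $\nu F$. The classical Fubini formula for oriented smooth fiber bundles with top-degree forms then yields
$$\int_E j^*\omega = \int_F \int_{N(P) \cap \pi^{-1}(p)} r_p(j^*\omega),$$
where $r_p$ of the restriction $j^*\omega|_E$ is identified with the restriction of $r_p(j^*\omega)$ to the fiber $N(P) \cap \pi^{-1}(p)$. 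Smoothness of the inner integral in $p$ is a standard consequence of the smooth bundle structure established above.

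The main obstacle is the first step, together with an orientation check. One must argue carefully that the type-$k$ hypothesis upgrades the measure-theoretic slice $N(P) \cap \pi^{-1}(F)$ of the merely Lipschitz current $N(P)$ to a smooth oriented subbundle of $\nu F$, and verify that the orientation on $E$ inherited from the canonically oriented $N(P) \subset SM$ coincides with the product orientation of the bundle $E \to F$ determined by the chosen orientation of $F$ and the natural orientation of the polar-cone fibers $(T_pP)^\circ \cap S_pM$; this is a local computation in the chart above. With this technical groundwork in place, the remainder of the proof is a routine invocation of the standard fiber integration theorem.
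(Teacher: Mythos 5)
Your proposal is correct and follows the same overall strategy as the paper: localize via a standard chart adapted to the submanifold with corners, observe that over $F$ the normal cycle is a fiber bundle, and reduce to Fubini for top-degree forms. There is, however, a genuine difference in execution worth flagging. The paper first passes to the \emph{co}normal cycle $N^*(P)\subset S^*M$, because the conormal cycle is a diffeomorphism-invariant object: in the chart $\varphi$, the slice $N^*(\varphi(P\cap U))\cap\pi^{-1}(\varphi(F\cap U))$ is literally a product $\varphi(F\cap U)\times \bigl(((-\infty,0]^{d-k}\times 0\times\RR^{n-d})/\!\sim\bigr)$, so Fubini is immediate, and one then transports the identity back to $SM$ via the metric isomorphism together with \eqref{eq:restriction_bundle_map}. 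You instead work with $N(P)$ inside $SM$ directly and argue that $N(P)\cap\pi^{-1}(F)$ is a smooth subbundle (with corners in the fibers) of $\nu F$, using the fact that the tangent cone is constant in the chart while the metric varies smoothly with $p$. This is a more abstract packaging of the same computation: it avoids the detour through $S^*M$ at the cost of requiring one to verify directly that the a priori merely Lipschitz set $N(P)\cap\pi^{-1}(F)$ is in fact a smooth corner-ed subbundle and that the induced and product orientations agree; you correctly identify this as the technical crux. One small inaccuracy: the chart only gives the inclusion $\varphi(F\cap U)\subset 0_{\RR^{d-k}}\times\RR^k\times 0_{\RR^{n-d}}$ (as the paper states), not equality, since there is no reason for $\varphi$ to map $F$ onto the entire $k$-stratum; this does not affect the argument, as the inclusion already forces the tangent cone to be constant along $F\cap U$.
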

\begin{proof}
 Let $N^*(P)\subset S^*M$ denote the conormal cycle  of $P$. 
 Since a choice  of a Riemannian metric on $M$ induces a bundle isomorphism between normal and conormal bundles  it suffices by \eqref{eq:restriction_bundle_map} to prove
 the lemma for the  conormal cycle of $P$.

 Suppose $P$ has dimension $d$.  For each $q\in F$  there exist an open neighborhood $U$ of $q$ and 
 a diffeomorphism $\varphi \colon U\to \RR^n$ such that 
 $\varphi(q)=0$ and 
 $$\varphi(P\cap U)= [0,\infty)^{d-k}\times \RR^{k}\times 0_{\RR^{n-d}}.$$ Moreover, 
 since the type of a point of $P$ is invariant under diffeomorphisms, we have 
 $$\varphi(F\cap U)\subset 0_{\RR^{d-k}}\times \RR^{k}\times 0_{\RR^{n-d}}.$$
 
 It suffices to prove the lemma for forms $\omega$ compactly supported in $U$. We denote by 
 $\widetilde \varphi\colon \pi^{-1}(U)\to S^* \RR^n$  the induced diffeomorphism and   write $p=\varphi^{-1}( x)$. Using \eqref{eq:restriction_bundle_map} we compute 
 \begin{align*}
  \int_{ N^*(P)\cap \pi^{-1}(F)} \omega & = \int_{ N^*(\varphi(P\cap U)) \cap \pi^{-1}(\varphi(F\cap U)) } (\widetilde\varphi^{-1})^*\omega\\
  & = \int_{\varphi(F\cap U)} \int_{ ((-\infty,0]^{d-k}\times 0_{\RR^{k}}\times \RR^{n-d})/\sim} r_{ x}(j^*(\widetilde\varphi^{-1})^*\omega)\\  
    & = \int_{\varphi(F\cap U)} (\varphi^{-1})^* \int_{  ((-\infty,0]^{d-k}\times 0_{\RR^{k}}\times \RR^{n-d})/\sim} (\widetilde\varphi^{-1})^* r_{p}(j^*\omega)\\
        & = \int_{\varphi(F\cap U)} (\varphi^{-1})^* \int_{ N(P)\cap \pi^{-1}(p)} r_{p}(j^*\omega)\\
        & = \int_F  \int_{N(P)\cap\pi^{-1}(p)} r_p(j^*\omega)
 \end{align*}

\end{proof}

\subsection{Riemannian geometry}

  \subsubsection{Riemannian submanifolds}

 Let $M\subset \overline{M}$ be an embedded submanifold of $\overline M$ with 
 induced Riemannian metric. We denote the Levi-Civita connections on $M, \overline M$ by $\nabla,\overline \nabla$ and by $h$ the
 second fundamental form of $M\subset \overline M$. If $X,Y$ are vector fields on $M$ extended 
 arbitrarily to a neighborhood of $M$ in  $\overline{M}$, then  the decomposition into normal and tangential parts
 \begin{equation}\label{eq:gauss} \overline \nabla_X Y = \nabla_XY + h(X,Y)\end{equation}
 along $M$ is called the  Gauss formula. This decomposition adapted to vector fields tangent to $M$ along  curves $\gamma$ in $M$ yields
 \begin{equation}\label{eq:acceleration}\overline\nabla _{\gamma'} Y =\nabla _{\gamma'} Y + h(\gamma', Y),\end{equation}
 see \cite{oneill}*{p. 102}. 
 In particular, if  $\exp^{\overline M} (T_p M\cap B(0,\varepsilon))\subset M$
 for some $\varepsilon>0$, where $\exp^{\overline M}$ denotes the exponential map of $\overline M$, the second fundamental form of $M$ vanishes at $p$. 
 
 If $X,\eta$ are vector fields on $M$ with $X$ always tangent to $M$ and $\eta$ always normal to $M$ extended arbitrarily to a neighborhood of $M$ in  $\overline M$,
 then there exists an analogous decomposition into a normal and tangential parts
 $$\overline \nabla_X \eta = S_\eta(X) + \nabla^\perp_X\eta,$$
 where $S_\eta\colon T_pM\to T_p M$ is the shape operator $\langle S_\eta(X),Y\rangle =\langle h(X,Y),\eta\rangle$ and $\nabla^\perp$ the normal connection.
This decomposition adapted to vector fields normal to $M$ along  curves $\gamma$ in $M$ yields
 \begin{equation}\label{eq:acceleration_normal}\overline\nabla _{\gamma'} \eta = S_\eta(\gamma') + \nabla^\perp _{\gamma'} \eta ,\end{equation}

 \subsubsection{Horizontal lifts}
 Let $M$ be a Riemannian manifold and denote by $\pi\colon SM\to M$ the  sphere bundle of $M$. The horizontal lift of a tangent vector $X\in T_pM$ to a point 
 $u$ of the sphere bundle with $\pi(u)=p$ is by definition  
 $$\widetilde X =  Y'(0)\in T_u SM,$$
 where $Y$ is a vector field of unit length along a curve $\gamma$ in $M$ such that 
 \begin{align}\label{eq:horizontal_lift}\begin{split} \gamma(0)& =p,\quad  \gamma' (0) = X,\\
  Y(0)&=u,\quad   \left.\nabla_{\gamma'} Y\right|_0  = 0.
  \end{split}
 \end{align}
Similarly, if $M\subset \overline M$ is an embedded submanifold of $\overline M$ with induced Riemannian metric and 
$\pi \colon \nu M\to M$ denotes the unit normal bundle to $M$, then the horizontal lift of a tangent vector $X\in T_pM$ to a point 
 $\xi$ of the normal bundle $\pi(\xi)=p$ is by definition 
  $$\widetilde X =   Y'(0)\in T_\xi \nu M,$$
 where $Y$ is a unit length vector  field always normal to $M$ along a curve $\gamma$ in $M$ such that  
 \begin{align}\label{eq:horizontal_lift_normal} \begin{split} \gamma(0)& =p,\quad  \gamma' (0) = X,\\
  Y(0)&=\xi,\quad   \left.\nabla_{\gamma'}^\perp Y\right|_0  = 0,
  \end{split}
 \end{align}
 where $\nabla^\perp$ denotes the normal connection. 
 
 \begin{lemma} \label{lemma:lifts_S} Let $M\subset \overline{M}$ be an embedded submanifold of $\overline M$ with 
 induced Riemannian metric. 
  If  $\exp^{\overline M} (T_p M\cap B(0,\varepsilon))\subset M$
 for some $\varepsilon>0$, then the horizontal lift 
 of $X\in T_p M$ to a direction   $u$ in $S\overline M$ normal to $M$ coincides with 
 the horizontal lift of $X$ to  $u$ in the unit normal bundle of $M\subset \overline M$. 
 \end{lemma}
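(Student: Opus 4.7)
The plan is to exhibit a single curve in the unit normal bundle $\nu M$ whose tangent vector at $0$ realizes both horizontal lifts simultaneously; their equality under the natural inclusion $T_u\nu M\hookrightarrow T_uS\overline M$ will then be automatic. Concretely, I would pick a smooth curve $\gamma$ in $M$ with $\gamma(0)=p$ and $\gamma'(0)=X$, and let $Y$ be the unit length vector field along $\gamma$ that is always normal to $M$, with $Y(0)=u$ and $\nabla^\perp_{\gamma'}Y|_0=0$. By \eqref{eq:horizontal_lift_normal} this makes $Y'(0)\in T_u\nu M$ the horizontal lift of $X$ to $u$ in the normal bundle.

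The task then reduces to showing that the same $Y$, now regarded as a unit length vector field along $\gamma$ in $\overline M$, also satisfies the defining condition $\overline\nabla_{\gamma'}Y|_0=0$ of \eqref{eq:horizontal_lift} for the horizontal lift to $S\overline M$. I would verify this by applying the Weingarten decomposition \eqref{eq:acceleration_normal} to the normal field $Y$ along the curve $\gamma\subset M$, which gives
\[
\overline\nabla_{\gamma'}Y\big|_0 \;=\; S_u(X) + \nabla^\perp_{\gamma'}Y\big|_0 \;=\; S_u(X),
\]
the second equality using the parallelism of $Y$ chosen above.

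The geometric input is the assumption $\exp^{\overline M}(T_pM\cap B(0,\varepsilon))\subset M$, which forces every geodesic of $\overline M$ emanating from $p$ in a direction tangent to $M$ to lie in $M$. As already recorded in the paragraph following \eqref{eq:acceleration}, this implies that the second fundamental form of $M\subset\overline M$ vanishes at $p$, i.e.\ $h_p=0$. Since $\langle S_u(X),Z\rangle=\langle h(X,Z),u\rangle$ for all $Z\in T_pM$, we obtain $S_u(X)=0$, and combining this with the display above closes the argument.

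The proof is essentially a one-line consequence of the Weingarten formula once the vanishing of $h_p$ is in hand, so there is no substantive obstacle to overcome; the only point to get right is the bookkeeping between the two notions of horizontal lift and the underlying inclusion $\nu M\hookrightarrow S\overline M$.
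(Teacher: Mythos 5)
Your proof is correct and matches the paper's (the paper's proof is the one-line remark that this is an immediate consequence of \eqref{eq:acceleration_normal}; you have simply unfolded it). You correctly isolate the two ingredients — the Weingarten decomposition and the vanishing of $h$ at $p$ forced by the hypothesis on $\exp^{\overline M}$ — and apply them in the expected order.
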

\begin{proof}
 This is an immediate consequence of  \eqref{eq:acceleration_normal}.
\end{proof}

 The horizontal lift of a tangent vector $X\in T_pM$ to a point 
 $\xi$ of the tangent bundle $TM$ with $\pi(\xi)=p$ is 
 $$\widetilde X =   Y'(0)\in T_\xi TM,$$
 where $Y$ is a vector field along a curve $\gamma$ in $M$ such that \eqref{eq:horizontal_lift} holds. 
 The horizontal lift to normal bundle $T M^\perp$ of $M\subset \overline M$ is defined similarly, but now the conditions on $Y$ and $\gamma$ are 
 \eqref{eq:horizontal_lift_normal}. As before we have

 \begin{lemma}\label{lemma:lifts_T}Let $M\subset \overline{M}$ be an embedded submanifold of $\overline M$ with 
 induced Riemannian metric.  If 
   $\exp^{\overline M} (T_p M\cap B(0,\varepsilon))\subset M$
 for some $\varepsilon>0$, then the horizontal lift    
 of $X\in T_p M$ to $\xi $ in $ T\overline M$ normal to $M$ coincides with 
 the horizontal lift of $X$ to  $\xi$ in the  normal bundle of $M\subset \overline{M}$.
 \end{lemma}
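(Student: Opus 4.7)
The plan is to imitate the proof of Lemma~\ref{lemma:lifts_S}, exploiting the formula \eqref{eq:acceleration_normal} to show that a single pair $(\gamma,Y)$ simultaneously witnesses both horizontal lifts.

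First I would invoke the observation made in the paragraph after \eqref{eq:gauss}: the hypothesis $\exp^{\overline M}(T_pM\cap B(0,\varepsilon))\subset M$ forces the second fundamental form $h$ of $M\subset\overline M$ to vanish at $p$. Consequently the shape operator $S_\xi|_p\colon T_pM\to T_pM$ is the zero map for every $\xi\in T_pM^\perp$.

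Next, to realise the horizontal lift of $X$ to $\xi$ in the normal bundle of $M\subset\overline M$, choose any smooth curve $\gamma$ in $M$ with $\gamma(0)=p$, $\gamma'(0)=X$, together with a normal vector field $Y$ along $\gamma$ satisfying $Y(0)=\xi$ and $\nabla^\perp_{\gamma'}Y|_0=0$. By definition the lift is $Y'(0)\in T_\xi(TM^\perp)\subset T_\xi T\overline M$. I claim the very same pair $(\gamma,Y)$ witnesses the horizontal lift of $X$ to $\xi$ in $T\overline M$ as well. Indeed, applying \eqref{eq:acceleration_normal} at $t=0$ yields
$$\overline\nabla_{\gamma'}Y\big|_0 = S_\xi(X)\big|_p + \nabla^\perp_{\gamma'}Y\big|_0 = 0+0 = 0,$$
and this is exactly the condition \eqref{eq:horizontal_lift} (with $\nabla$ replaced by $\overline\nabla$) defining the horizontal lift in $T\overline M$. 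Thus both lifts coincide with the common value $Y'(0)$.

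Because the entire argument collapses to one invocation of \eqref{eq:acceleration_normal} together with the vanishing of $h$ at $p$, there is essentially no obstacle; the only point demanding a momentary pause is the natural inclusion $T_\xi(TM^\perp)\hookrightarrow T_\xi T\overline M$ coming from viewing $TM^\perp$ as a submanifold of $T\overline M|_M$, which ensures that the two lifts are being compared as elements of the same vector space.
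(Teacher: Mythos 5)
Your proof is correct and follows exactly the route the paper takes: it combines the Weingarten-type decomposition \eqref{eq:acceleration_normal} with the observation (stated in the paper right after \eqref{eq:acceleration}) that the hypothesis on $\exp^{\overline M}$ forces the second fundamental form, and hence the shape operator, to vanish at $p$. The paper compresses all of this into the phrase that the lemma is an immediate consequence of \eqref{eq:acceleration_normal}, while you have spelled out the same argument explicitly.
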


\subsection{Representation theory}

\subsubsection{Representations of the general and special linear groups} \label{subsec:gln}

We first recall that the isomorphism classes of irreducible  representations of  $GL(n,\CC)$
may be  param\-etrized by nonincreasing integer sequences $\lambda_1\geq\ldots\geq \lambda_n$.
A nonincreasing sequence   $\lambda=(\lambda_1,\ldots, \lambda_m)$ of nonnegative integers  is a called a partition
with at most $m$ parts. The transpose partition $\lambda'$ is defined by $\lambda'_i = |\{\lambda_j\colon \lambda_j\geq i\}|$.
The irreducible  representations of  $SL(n,\CC)$ are parametrized by partitions with at most $n-1$ parts,  see \cite{fulton_harris}*{\S 15.5}.
If $\lambda, \mu$ are nonincreasing integer sequences and there exists an integer $k$ with  $\lambda_i=\mu_i+k$ for $i=1,\ldots, n$, then  the restrictions 
of the corresponding representations to $SL(n,\CC)$ are isomorphic. More precisely, the restriction of the irreducible $GL(n,\CC)$-representation $\lambda_1\geq\ldots\geq \lambda_n$ to $SL(n,\CC)$ is the representation $\mu_1\geq\cdots\geq \mu_{n-1}$ with 
$\mu_i=\lambda_i-\lambda_n$ for $i=1,\ldots,n-1$. 
For example, the representations $\largewedge^i V$ for $i=1,\ldots, n$, where $V=\CC^n$ is the standard representation of $GL(n,\CC)$ correspond to 
$$\lambda_1=\cdots=\lambda_i=1 \qquad \text{and}\qquad \lambda_{i+1}=\cdots =\lambda_n=0.$$

Sometimes it is more convenient to parametrize the irreducible representations of $SL(n,\CC)$ in terms of their highest weights
$$m_1\varpi_1+ \cdots + m_{n-1} \varpi_{n-1}$$
where $m_1,\ldots,m_{n-1}$ are nonnegative integers and the $\varpi_i$ are the fundamental weights of $SL(n,\CC)$, i.e., 
the highest weights of the irreducible representations $\largewedge^i V$. This parametrization is related to the former by
$\lambda_i=m_i+\cdots+m_{n-1}$ for $ i=1,\ldots, n-1$.

\begin{lemma} \label{lemma:tensor decomp} Let $1\leq k\leq n-1$. 
 The $SL(n,\CC)$-representations 
 $$\largewedge^k V^* \otimes \largewedge^{n-k} V \qquad \text{and} \qquad \largewedge^{k-1} V^* \otimes \largewedge^{n-k-1} V$$
 differ by precisely one irreducible submodule of highest weight $2\varpi_{n-k}$. Moreover, only this  representation
 can occur as a  common irreducible submodule of 
  $$\Sym^2(\largewedge^k V^*) \qquad \text{and} \qquad \Sym^k(\Sym^2 V^*)$$
\end{lemma}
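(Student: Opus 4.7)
For the first claim, the volume form on $V$ induces an $SL(n,\CC)$-isomorphism $\largewedge^{j}V^{*}\cong \largewedge^{n-j}V$, allowing me to rewrite
\[
\largewedge^{k}V^{*}\otimes \largewedge^{n-k}V\cong \largewedge^{n-k}V\otimes \largewedge^{n-k}V,\qquad
\largewedge^{k-1}V^{*}\otimes \largewedge^{n-k-1}V\cong \largewedge^{n-k+1}V\otimes \largewedge^{n-k-1}V.
\]
Applying Pieri's rule $\largewedge^{a}V\otimes \largewedge^{b}V=\bigoplus_{i=\max(0,\,a+b-n)}^{\min(a,b)}S^{(2^{i},1^{a+b-2i})}V$ to both: since $a+b=2(n-k)$ in each case, the two decompositions match for $i\leq n-k-1$ and differ only in that the first contains the additional summand $S^{(2^{n-k})}V$ (corresponding to $i=n-k$). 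Reading the Dynkin labels $m_{j}=\lambda_{j}-\lambda_{j+1}$ off the partition $(2^{n-k})$ gives $m_{n-k}=2$ and $m_{j}=0$ for $j\neq n-k$, so its $SL$-highest weight is $2\varpi_{n-k}$.

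For the second claim, any $SL$-irreducible of $\Sym^{2}(\largewedge^{k}V^{*})$ is already a summand of $\largewedge^{k}V^{*}\otimes \largewedge^{k}V^{*}\cong \largewedge^{n-k}V\otimes \largewedge^{n-k}V$, hence of the form $S^{(2^{i},1^{2(n-k)-2i})}V$ for some admissible $i$. A direct Dynkin-label computation (paying special attention to the boundary case $2(n-k)-i=n$, where one must first shift by $-\lambda_{n}\cdot(1,\ldots,1)$ to obtain an $SL$-dominant weight) yields the possibilities $\varpi_{i}+\varpi_{2(n-k)-i}$, $\varpi_{2(n-k)}$, $\varpi_{i}$, $2\varpi_{n-k}$, or $0$. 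Only $2\varpi_{n-k}$ (present for $i=n-k$) has all labels even, with the sole exception of the trivial weight, which can occur only in the degenerate case $i=0$ and $2(n-k)=n$, i.e.\ $k=n/2$.

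Meanwhile, by Thrall's classical plethysm identity
\[
\Sym^{k}(\Sym^{2}V)=\bigoplus_{\substack{\lambda\vdash 2k\\ \lambda\text{ has all parts even}}}S^{\lambda}V,
\]
every $SL$-irreducible of $\Sym^{k}(\Sym^{2}V^{*})$ has all Dynkin labels even. The partition $\lambda=(2^{k})$ contributes $S^{(2^{k})}V^{*}$, whose $SL$-isomorphism class is $S^{(2^{n-k})}V$ (dualisation reverses Dynkin labels, sending the $2$ at position $k$ to position $n-k$), so the highest weight $2\varpi_{n-k}$ does indeed occur. That $S^{(2^{n-k})}V$ lies in the symmetric (and not the antisymmetric) square is immediate: its highest weight vector can be realised as $(v_{1}\wedge\cdots\wedge v_{n-k})\otimes(v_{1}\wedge\cdots\wedge v_{n-k})$, which is manifestly symmetric. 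Finally, in the exceptional case $k=n/2$, a trivial $SL$-summand of $\Sym^{k}(\Sym^{2}V^{*})$ would by Thrall require $\lambda=(a^{n})\vdash n$ with $a$ even, forcing $a=1$, a contradiction; so the trivial representation is absent on this side, and the unique common irreducible is $2\varpi_{n-k}$.

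The principal technical burden of this approach is the Dynkin-label case analysis for $S^{(2^{i},1^{2(n-k)-2i})}V$, especially at the boundary where the partition attains length $n$. Everything else reduces to standard applications of the Pieri and Thrall formulas.
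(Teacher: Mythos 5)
Your proof is correct, and the first assertion follows the paper's route almost verbatim (reduce via $\largewedge^j V^*\cong\largewedge^{n-j}V$ and apply Pieri). The interesting divergence is in the second assertion. For the $\Sym^2(\largewedge^k V^*)$ side, both you and the paper appeal to Pieri's decomposition of $\largewedge^{n-k}V\otimes\largewedge^{n-k}V$; but for the $\Sym^k(\Sym^2 V^*)$ side the paper only extracts a transpose bound from the inclusion $\Sym^k(\Sym^2 V^*)\subset(\Sym^2 V^*)^{\otimes k}$ via iterated Littlewood--Richardson (namely that $\lambda'_1,\lambda'_2\geq n-k$), and then intersects this with $\lambda'_1+\lambda'_2\leq 2(n-k)$. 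You instead invoke Thrall's exact plethysm $\Sym^k(\Sym^2 V)=\bigoplus_{\lambda\vdash 2k,\,\lambda\text{ even}}S^\lambda V$ and read off the much cleaner selection criterion that every Dynkin label on that side is even. Applied to the explicit Pieri list $S^{(2^i,1^{2(n-k)-2i})}V$, the parity condition isolates $2\varpi_{n-k}$ with the single exception of the trivial weight when $k=n/2$, which you then explicitly rule out on the Thrall side. Your route is sharper (Thrall gives the full decomposition rather than a bound) and it makes the handling of the boundary case $2(n-k)-i=n$ and of the potential trivial summand completely transparent, whereas the paper's inequality-based argument leaves the final inference $\lambda'_1=\lambda'_2=n-k\Rightarrow\lambda=(2^{n-k})$ resting on the unstated fact (also implicit in Pieri) that these partitions have at most two columns. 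Both arguments are sound; yours is simply more explicit, at the cost of invoking a plethysm identity rather than only the Littlewood--Richardson rule.
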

\begin{proof} Since 
\begin{equation}\label{eq:duals}\largewedge^k V^*\cong  \largewedge^{n-k} V
\end{equation}
 as $SL(n,\CC)$-representations,  to prove the first  assertion it is enough  to show that  
$\largewedge^i V\otimes \largewedge^i V$ and $\largewedge^{i+1} V\otimes \largewedge^{i-1} V$ differ for each $i=1,\ldots, n-1$ 
by precisely one irreducible subrepresentation 
of highest weight $2\varpi_i$. But this follows immediately from the Littlewood-Richardson rule described in Section~\ref{sec:littlewood} below.

To prove the second assertion, observe that 
$$V^*\otimes V^*\cong \largewedge^2 V^* \oplus \Sym^2 V^*.$$
Hence using again \eqref{eq:duals} and the Littlewood-Richardson rule, we find that 
$\Sym^2 V^*$ is the irreducible representation with highest weight $2\varpi_{n-1}$. Next, since 
$$\Sym^k(\Sym^2 V^*)\subset (\Sym^2 V^*)^{\otimes k},$$ the Littlewood-Richardson rule implies that 
all the integer partitions $\lambda$ with at most $n-1$ parts that correspond to irreducible 
subrepresentations $\Sym^k(\Sym^2 V^*)$ must  satisfy $\lambda'_1,\lambda'_2\geq n-k$. But 
$$\Sym^2(\largewedge^k V^*)\subset (\largewedge^k V^*)^{\otimes 2}$$ and so it follows again from \eqref{eq:duals} and the Littlewood-Richardson rule that 
all the integer partitions $\lambda$ with at most $n-1$ parts that correspond to irreducible 
subrepresentations $\Sym^2(\largewedge^k V^*)$ must  satisfy $\lambda'_1+\lambda'_2\leq 2n-2k$. 
Thus only the irreducible representation with highest weight $2\varpi_{n-k}$ can occur as a submodule of both    $\Sym^2(\largewedge^k V^*)$  and $\Sym^k(\Sym^2 V^*)$.
\end{proof}

By the Weyl dimension formula for $SL(n,\CC)$, 
the dimension of the irreducible representation $\Gamma_\lambda=\Gamma_{\lambda_1,\ldots,\lambda_{n-1}}$  is given by
\begin{equation}\label{eq:weyl_dim} \dim (\Gamma_\lambda) = \prod_{1\leq i<j \leq n } \frac{\lambda_i-\lambda_j+j-i}{j-i},\end{equation}
where $\lambda_n=0$, see, e.g., \cite{fulton_harris}*{\S 15.3}.

\subsubsection{Representations of the orthogonal groups}
Next we recall the parametrizations of the complex orthogonal groups.  
The irreducible representations of $O(n,\CC)$ are parametrized by partitions with at most $n$ parts satisfying
$$ \lambda'_1+\lambda_2'\leq n,$$
see \cite{fulton_harris}*{Theorem 19.19}.
The description of the representations of the special orthogonal group is sensitive to the parity of the dimension $n$.
The irreducible representations of $SO(2m+1,\CC)$ are parametrized by integer sequences $(\lambda_1, \ldots, \lambda_m)$ with \begin{equation}\label{eq:parametr_so_odd}\lambda_1\geq\ldots\geq \lambda_m\geq 0;\end{equation}
the irreducible representations of $SO(2m,\CC)$ are parametrized by integer sequences $(\lambda_1, \ldots, \lambda_m)$ with \begin{equation}\label{eq:parametr_so_even}\lambda_1\geq\ldots\geq \lambda_m\quad \text{with} \quad\lambda_{m-1}\geq |\lambda_m|.\end{equation}

Two partitions $\lambda$ and $\mu$ with at most $n$ parts are called associated if $\lambda'_1+\mu'_1=n$ and $\lambda'_i=\mu_i'$ for $i>1$. Representations of $O(n,\CC)$ corresponding to 
associated partitions restrict to isomorphic representations of $SO(n,\CC)$.  Note that at least one of  each pair of 
associated partitions will have at most $\frac 12 n$ parts.  If $n=2m+1$ is odd, then these restrictions are irreducible and the restricted representation is given by \eqref{eq:parametr_so_odd}.
The same holds if $n=2m$ is even, and $\lambda_m=0$. But if $\lambda_m>0$, then the restriction is the sum of two irreducible $SO(n,\CC)$-representations
$$\lambda_1\geq \cdots \geq \lambda_{m-1}\geq \lambda_m \quad \text{and}\quad \lambda_1\geq \cdots \geq \lambda_{m-1}\geq -\lambda_m.$$
For example,  the representations $\largewedge^i V$ for $i=1,\ldots, n$, where $V=\CC^n$ is the standard representation of $O(n,\CC)$ correspond to 
$$\lambda_1=\cdots=\lambda_i=1 \qquad \text{and}\qquad \lambda_{i+1}=\cdots =\lambda_n=0.$$
In particular,  when restricted to $SO(n,\CC)$ these representations are irreducible when $n$ is odd; the same holds if  $n$ is even and $i<n$, but  $\largewedge^n V$ is not irreducible: the eigenspaces of the Hodge star operator are the irreducible subrepresentations.  

\subsubsection{Littlewood-Richardson coefficients and the Littlewood restriction rule} \label{sec:littlewood}
If $\Gamma_\lambda$ and $\Gamma_\mu$ are irreducible representations of $GL(n,\CC)$ corresponding to integer partitions $\lambda, \mu$ with at most $n$ parts, then their tensor product
decomposes into irreducible components as 
$$\Gamma_\lambda \otimes \Gamma_\mu = \bigoplus N_{\lambda\mu\nu}\Gamma_\nu,$$
where the sum is over integer partitions $\nu$ with at most $n$ parts and the numbers $N_{\lambda\mu\nu}$ are the Littlewood-Richardson coefficients. These are determined by the Littlewood-Richardson rule  (see, e.g., \cite{fulton_harris}*{Appendix A}): 
To each partition $\lambda$ is associated a Young diagram
$$\yng(3,3,2,1,1)$$
with $\lambda_i$ boxes in the $i$th row and the rows of boxes lined up on the left. 
A $\mu$-expansion of $\lambda$ is a partition obtained from $\lambda$ by  first adding $\mu_1$ boxes and putting the integer $1$ in each new box; then adding $\mu_2$ boxes with a $2$, continuing until finally $\mu_k$ boxes with the integer $k$ in each box are added. In each step, the boxes are added such that no two are in 
the same column. The $\mu$-expansion is called strict if, when the integers in the  boxes are listed from right to left, starting with the top row and working down, one looks at the first $t$ entries in this list (for any $t$ between $1$ and $\mu_1+\cdots+ \mu_k$), each integer $p$ between $1$ and $k-1$ occurs at least as many times 
as the next integer $p+1$. The Littlewood-Richardson coefficient is the number of ways the Young diagram $\lambda$ can be expanded  to the Young diagram $\nu$  by a strict $\mu$-expansion.

For the $\lambda$ in the so-called stable range, the decomposition into irreducible components of the restriction from $GL(n,\CC)$ to $O(n,\CC)$ is  well-known. 
 By the Littlewood restriction rule, see, e.g.,  \cite{howe_etal}, if  $\lambda$ is a partition with at most $\lfloor n/2 \rfloor$ parts, then 
 $$\operatorname{Res}^{GL(n,\CC)}_{O(n,\CC)} (\Gamma_\lambda )= \bigoplus N_{\lambda\mu} \Gamma_\mu$$
 where the sum is over all $\mu=(\mu_1\geq \cdots \geq \mu_n\geq 0)$ with $\mu_1'+\mu_2'\leq n$, where
 $$N_{\lambda\mu}= \sum N_{\delta\mu\lambda}$$
 with $N_{\delta\lambda\mu}$ the Littlewood-Richardson coefficient, where the sum is over all partitions $\delta$ with all $\delta_i$ even.

  It is not difficult to adapt the above to representations of the special linear group. Given representions of 
 $SL(n,\CC)$ corresponding to integer partitions $\lambda, \mu$  with at most $n-1$ parts, consider the representations of $GL(n,\CC)$ with $\lambda_1\geq\cdots \geq \lambda_{n-1}\geq \lambda_n=0$ and 
  $\mu_1\geq\cdots \geq \mu_{n-1}\geq \mu_n=0$. Decomposing their tensor product using the Littlewood-Richardson rule and restriciting back to $SL(n,\CC)$ gives the result for the special linear group. The same works for the Littlewood restriction rule.

 \begin{lemma}\label{lemma:littlewood}
  If $k\leq n/2$ and 
  $$\lambda_1=\cdots = \lambda_k=2 \quad \text{and}\quad \lambda_{k+1}=\cdots=\lambda_{n-1}=0,$$
  then 
  $$\operatorname{Res}^{SL(n,\CC)}_{SO(n,\CC)} (\Gamma_\lambda )= \bigoplus \Gamma_\mu$$
  where the sum is over all sequences $\mu=(\mu_1,\ldots,\mu_k,0,\ldots, 0)$ of even integers  satisfying $|\mu_1|\leq2 $ in addition to \eqref{eq:parametr_so_odd} and \eqref{eq:parametr_so_even}.
 \end{lemma}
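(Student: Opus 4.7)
The plan is to compute the restriction in two stages: first from $GL(n,\CC)$ to $O(n,\CC)$ using the Littlewood restriction rule, then from $O(n,\CC)$ to $SO(n,\CC)$ using the description of associate partitions recalled in Section~\ref{sec:littlewood}.

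First, I would lift $\lambda$ to the $GL(n,\CC)$-partition $(2^k, 0^{n-k})$. The Littlewood restriction rule expresses the restriction to $O(n,\CC)$ as a sum of $\Gamma_\mu$ with multiplicity $N_{\lambda\mu} = \sum_\delta N_{\delta\mu\lambda}$, where $\delta$ ranges over partitions with all parts even. Since every part of $\lambda$ equals $0$ or $2$, the only sub-partitions $\delta \subseteq \lambda$ with even parts are the rectangles $\delta = (2^j)$ for $j = 0, \ldots, k$, and $N_{\delta\mu\lambda}$ then counts Littlewood--Richardson skew tableaux of shape $\lambda/\delta = (2^{k-j})$ with content $\mu$. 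The key combinatorial point is that a $(2^s)$-rectangle admits a unique LR filling: the lattice condition on the reverse reading word forces the top row to read $(1,1)$, after which column strictness and the lattice condition inductively force row $i$ to equal $(i, i)$. Consequently the only $\mu$ arising is $\mu = (2^{k-j})$, with multiplicity one, giving
$$ \operatorname{Res}^{GL(n,\CC)}_{O(n,\CC)}(\Gamma_\lambda) = \bigoplus_{s=0}^{k} \Gamma_{(2^s)}. $$

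Next I would restrict each $\Gamma_{(2^s)}$ from $O(n,\CC)$ to $SO(n,\CC)$. Since $s \leq k \leq n/2$, the partition $(2^s)$ has at most $\lfloor n/2 \rfloor$ parts, so it is the preferred representative of its associate pair and no switch is needed. When $n$ is odd, or when $n = 2m$ and $s < m$, the restriction is irreducible and equals $\Gamma_{(2^s, 0^{m-s})}$; when $n = 2m$ and $s = m$ (which can only occur if $k = m = n/2$), it splits as $\Gamma_{(2^{m-1}, 2)} \oplus \Gamma_{(2^{m-1}, -2)}$. Inspecting the resulting list of partitions shows that they are precisely those of the form $(\mu_1, \ldots, \mu_k, 0, \ldots, 0)$ with $\mu_i$ even, $|\mu_i| \leq 2$, and the appropriate ordering condition \eqref{eq:parametr_so_odd} or \eqref{eq:parametr_so_even}, yielding the claim.

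The main technical obstacle is the uniqueness of the Littlewood--Richardson filling of a rectangular shape $(2^s)$; once this combinatorial step is established, the rest is routine bookkeeping with associate partitions and a brief case distinction in the $O(2m,\CC) \to SO(2m,\CC)$ step.
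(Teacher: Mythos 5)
Your proof is correct and takes essentially the same route as the paper: both apply the Littlewood restriction rule to $\lambda = (2^k,0^{n-k})$, observe that an even partition $\delta \subseteq \lambda$ must be of the form $(2^j)$, and then show that the Littlewood--Richardson coefficient $N_{\delta\mu\lambda}$ equals $1$ precisely when $\mu = (2^{k-j})$. You phrase the combinatorial core as uniqueness of the LR filling of a rectangular skew shape, whereas the paper argues equivalently via strict $\mu$-expansions, and you carry out the final $O(n,\CC)\to SO(n,\CC)$ branching (including the $n=2m$, $s=m$ splitting) explicitly, a step the paper leaves to its preliminaries on associate partitions.
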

\begin{proof}
Let $\delta$ be a partition into at most $n$ parts consisting only of even integers. 
Suppose that $\lambda$ is a strict $\mu$-expansion of $\delta$. Since clearly $\lambda_i\geq \delta_i$ we must have 
$$\delta_1=\cdots =\delta_i=2 \quad\text{and}\quad \delta_{i+1}=\cdots=\delta_n=0$$
for some $i\leq k$. Since no two boxes can be added to one column of $\delta$, each row of $\mu$ contains at most $2$ boxes. Now $ \lambda$ is a strict $\mu$-expansion,
so  each nonzero row of $\mu$ consists of precisely two boxes. This proves that 
if $\delta$ is even, then 
$N_{\delta\mu\lambda}=1$
if and only if $\delta'+\mu'=\lambda'$. Therefore the  Littlewood restriction rule implies 
  $$\operatorname{Res}^{SL(n,\CC)}_{O(n,\CC)} (\Gamma_\lambda )= \bigoplus \Gamma_\mu$$
  where the sum is over all partitions $\mu$ with 
$$\mu_1=\cdots =\mu_i=2 \quad\text{and}\quad \mu_{i+1}=\cdots=\mu_n=0.$$
for some $i\leq k$. 

\end{proof}

\section{Angular curvature measures} 
\subsection{Translation-invariant curvature measures} Let $\Curv(\RR^n)\subset \calC(\RR^n)$ denote the subspace of translation-invariant curvature measures.
Recall that $\Curv(\RR^n)$ is graded by degree of homogeneity, $$\Curv(\RR^n)=\bigoplus_{k=0}^n \Curv_k(\RR^n).$$ 
Following \cite{bfs}*{Section 2.2.} we associate to each   $\Psi\in \Curv(\RR^n)$  
a function $c_{\Psi}$ on closed convex cones in $\RR^n$ 
$$c_\Psi (C)  = \lim_{r\to 0} \frac{1}{\omega_k r^k} \Psi(C, L \cap B(0,r)),
$$
where $L$ is the lineality space of  $C$, $k=\dim L$, and $B(0,r)$ denotes the closed ball of radius $r$ centered at $0$.
An immediate consequence of the finite additivity of the normal cycle is that $c_\Psi$ is finitely additive on the set of 
cones having the same lineality space.

If $\Psi$ is given  by  differential forms $(\psi,\theta)\in \Omega^{n-1}(S\RR^n)\oplus \Omega^n(\RR^n)$ and $k=\dim L(C)\leq n-1$, then a direct computation shows that 
\begin{equation}\label{eq:c forms} c_\Psi (C)= \int_{C^\circ \cap S^{n-1}} \vec L \lrcorner   \psi  \end{equation}                                                                                                                 
where $\vec L= u_1\wedge \cdots \wedge  u_k$ for some suitably oriented orthonormal basis of $L$.

The following characterization of angularity will be important for us. 
\begin{lemma}\label{lemma:char_angularity}
 A curvature measure $\Psi\in\Curv(\RR^n)$ is angular if and only if 
 there is a  function $f\colon \coprod_k \Grass_k(\RR^n)\to \RR$ such that for every closed convex cone $C\subset \RR^n$ with lineality space $L(C)=L$  
$$c_\Psi(C)= f(L)\gamma(C),$$
 where $\gamma(C)$ is the external angle of $C$, see \eqref{eq:exterior_angle}.
\end{lemma}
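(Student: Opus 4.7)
The plan is to establish a universal face formula and deduce both directions of the equivalence from it by direct comparison with the definition \eqref{eq:angular_nonhom} of angularity. Concretely, I claim that for any translation-invariant $\Psi \in \Curv(\RR^n)$, any polytope $P$, and any Borel set $U \subset \RR^n$,
$$ \Psi(P,U) = \sum_{F \text{ face of } P} c_\Psi(T_F P) \, \vol_{\dim F}(F \cap U). \qquad (\star) $$

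To prove $(\star)$, represent $\Psi$ by differential forms $(\psi,\phi) \in \Omega^{n-1}(S\RR^n) \oplus \Omega^n(\RR^n)$ and decompose the normal cycle $N(P)$ stratum by stratum: over the relative interior of each $k$-face $F$ with $k < n$, $N(P)$ is the trivial bundle $\mathrm{relint}(F) \times ((T_F P)^\circ \cap S^{n-1})$. Fiber integration via Lemma~\ref{lemma:fiber_int} reduces $\int_{N(P)\cap \pi^{-1}(F\cap U)} \psi$ to $\int_{F \cap U} \bigl( \int_{(T_F P)^\circ \cap S^{n-1}} \vec L \lrcorner \psi \bigr) d\vol_k$, with $\vec L$ an oriented $k$-vector spanning $\overline F$. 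Translation invariance of $\psi$ makes the inner integral constant along $F$, and by \eqref{eq:c forms} this constant equals $c_\Psi(T_F P)$. The top stratum contributes $\int_{P \cap U} \phi = c_\Psi(\RR^n) \vol_n(P \cap U)$, matching the $F = P$ term. Summing over all strata yields $(\star)$.

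Granted $(\star)$, the equivalence follows almost formally. If $\Psi$ is angular with functions $f_k$ on $\Grass_k(\RR^n)$, comparing $(\star)$ with \eqref{eq:angular_nonhom} and varying $U$ gives $c_\Psi(T_F P) = f_k(\overline F)\gamma(F,P)$ for every face of every polytope; since every polyhedral cone arises as some $T_F P$, we obtain $c_\Psi(C) = f_k(L)\gamma(C)$ on polyhedral cones, and this extends to arbitrary closed convex cones by density together with continuity of both $\gamma$ and $c_\Psi$. Conversely, if $c_\Psi(C) = f(L(C))\gamma(C)$, then substituting into $(\star)$ and using $L(T_F P) = \overline F$ and $\gamma(T_F P) = \gamma(F,P)$ reproduces \eqref{eq:angular_nonhom}, so $\Psi$ is angular.

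The main obstacle is the careful proof of $(\star)$: the stratumwise decomposition of $N(P)$ and the consistent handling of orientations in the fiber integration. Once these bookkeeping issues are resolved, the identification of the fiber integral with $c_\Psi(T_F P)$ via \eqref{eq:c forms} and the remainder of the argument are essentially routine.
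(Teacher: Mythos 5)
Your proof is correct and follows essentially the same route as the paper: it hinges on the universal polytope formula $(\star)$, which is exactly the paper's \eqref{eq:c Psi}, obtained by stratifying the normal cycle over the faces and using translation invariance together with \eqref{eq:c forms}. The paper's proof is simply terser, leaving the comparison with \eqref{eq:angular_nonhom} and the extension from polyhedral to general closed convex cones implicit, whereas you spell out those routine steps.
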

\begin{proof}
The Lemma is an immediate consequence of the following fact:  for every polytope $P\subset \RR^n$
\begin{equation}\label{eq:c Psi}\Psi(P, E ) = \sum_{k=0}^n  \sum_{F\in\calF_k(P)} c_\Psi (T_FP ) \vol_k(F\cap E),\end{equation}
where $\calF_k(P)$ denotes the set of $k$-faces of $P$.  
To see the latter, just observe that if $\Psi$ is given  by  differential forms $(\psi,\theta)$ 
$$\Psi(P, E)  =  \sum_{k=0}^{n-1}  \sum_{F\in\calF_k(P)} \int_{(F\cap E) \times (T_F P) ^\circ \cap S^{n-1}}  \psi + \int_{P\cap E}\theta.$$
If the forms $(\psi,\theta)$ are translation-invariant then comparing with \eqref{eq:c forms} yields \eqref{eq:c Psi}. 
\end{proof}

\label{subsec:ccc}

If a differential form $\omega\in \Omega^{n-1}(S\RR^n)$ is  extended to $T\RR^n\supset S\RR^n$ then by Stokes' theorem
$$\int_{N(P)} \omega= \int_{N_1(P)} d\omega$$
where 
$$N_1(P)= m_*((0,1]\times N(P)) + [P]\times_\pi [0]$$
with $m\colon (0,\infty)\times S\RR^n \to T\RR^n$, $m(u,t)= tu$, is the normal disc current, see \cites{bf_convolution,hig}. 
Recall from the introduction that a curvature measure $\Psi\in \Curv(\RR^n)$ is said to be a constant coefficient curvature measure
 if there exists a constant coefficient form $\omega\in \largewedge^n(\RR^n\oplus \RR^n)^*\subset \Omega^n(\RR^n\oplus \RR^n)$ such that
 \begin{equation}\label{eq:ccc}\Psi(P,U)= \int_{N_1(P)\cap \pi^{-1}(U)} \omega\end{equation}
  for all compact submanifolds with corners $P\subset \RR^n$  and all Borel sets $U\subset \RR^n$.

 A direct computation shows the following
 \begin{lemma}[\cite{bfs}*{Lemma 2.30}] Every constant coefficient curvature measure is angular.  
 \end{lemma}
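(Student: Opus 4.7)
The plan is to verify the criterion of Lemma~\ref{lemma:char_angularity}: I will compute $c_\Psi(C)$ for an arbitrary closed convex cone $C\subset\RR^n$ with lineality space $L$ of dimension $k$ and show it factors as $f(L)\gamma(C)$ for a function $f$ on $\coprod_k\Grass_k(\RR^n)$ determined by $\omega$.

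First, decompose the constant coefficient form by bidegree, $\omega=\sum_{p+q=n}\omega_{p,q}$, where $\omega_{p,q}$ has degree $p$ in the ``base'' factor $\RR^n$ and degree $q$ in the ``fiber'' factor. From the definition $N_1(C)=m_*((0,1]\times N(C))+[C]\times_\pi[0]$ and the description of the normal cycle of a cone, the set $N_1(C)\cap\pi^{-1}(L\cap B(0,r))$ is the product $(L\cap B(0,r))\times(C^\circ\cap B(0,1))$, where $L\cap B(0,r)$ lies in $L$ and $C^\circ\cap B(0,1)$ lies in $L^\perp$ (since $C\supset L$ forces $C^\circ\subset L^\perp$). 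By dimension, only the piece $\omega_{k,n-k}$ contributes to the integral over this $n$-dimensional product; all other bidegrees integrate to zero.

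Next, by Fubini applied to $\omega_{k,n-k}$ written as a sum of split forms $\alpha\otimes\beta$ with $\alpha\in\largewedge^k(\RR^n)^*$ and $\beta\in\largewedge^{n-k}(\RR^n)^*$, the integral factors as
$$\int_{L\cap B(0,r)}\alpha\cdot\int_{C^\circ\cap B(0,1)}\beta=\alpha(\vec L)\,\omega_k r^k\cdot\int_{C^\circ\cap B(0,1)}\beta.$$
Since $C^\circ\cap B(0,1)\subset L^\perp$ and $\dim L^\perp=n-k$, only the restriction $\beta|_{L^\perp}\in\largewedge^{n-k}(L^\perp)^*$ matters, and this restriction is a scalar multiple $c_\beta(L)$ of the volume form on $L^\perp$; thus
$$\int_{C^\circ\cap B(0,1)}\beta=c_\beta(L)\vol_{n-k}(C^\circ\cap B(0,1))=c_\beta(L)\,\omega_{n-k}\gamma(C),$$
by the standard cone-volume identity combined with \eqref{eq:exterior_angle}.

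Dividing by $\omega_k r^k$ and letting $r\to0$, we obtain
$$c_\Psi(C)=\Bigl(\sum\alpha(\vec L)\,c_\beta(L)\,\omega_{n-k}\Bigr)\gamma(C)=f(L)\gamma(C),$$
where $f$ depends only on $L$ (and on $\omega$). By Lemma~\ref{lemma:char_angularity} this means $\Psi$ is angular. The main point requiring care is the bidegree bookkeeping together with the product structure of $N_1(C)\cap\pi^{-1}(L\cap B(0,r))$; once these are in hand the computation is purely linear algebra on $\largewedge^n(\RR^n\oplus\RR^n)^*$.
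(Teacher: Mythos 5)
Your proof is correct and is essentially the direct computation that the paper alludes to; the paper itself provides no argument, merely citing \cite{bfs}*{Lemma 2.30} with the remark ``A direct computation shows the following.'' Your bidegree bookkeeping, the identification of $N_1(C)\cap\pi^{-1}(L\cap B(0,r))$ with the product $(L\cap B(0,r))\times(C^\circ\cap B(0,1))\subset L\times L^\perp$ (up to a lower-dimensional piece from $[C]\times_\pi[0]$), the Fubini factorization, and the identity $\vol_{n-k}(C^\circ\cap B(0,1))=\omega_{n-k}\gamma(C)$ are all sound, and together they yield $c_\Psi(C)=f(L)\gamma(C)$ so that Lemma~\ref{lemma:char_angularity} applies.
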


\subsection{Curvature measures on Riemannian manifolds} \label{sec:transfer} Let $M$ be a  Riemannian manifold. The paper \cite{bfs} describes a linear  isomorphism $\tau$ between $\calC(M)$ the space of smooth curvature measures on $M$
and the smooth sections of the bundle   $\Curv(TM)=\coprod_{p\in M} \Curv(T_pM)$ of
translation-invariant smooth cuvature measures on the tangent spaces of $M$. Let us recall its construction. The tangent spaces of $SM$ 
 split into subspaces of horizontal and vertical tangent vectors
 $T_\xi SM \cong  H\oplus V$, $\xi \in S_p M$. 
 Since every $X_p\in T_pM$ has a unique  horizontal lift, $T_pM$ may be canonically identified with $H$ and so 
 \begin{equation}\label{eq_tangent_space_iso} T_\xi SM \cong T_pM \oplus \langle \xi\rangle ^\perp.\end{equation}
 This identification induces an  isomorphism  $\largewedge ^k T_\xi SM  \cong \bigoplus_{i=0}^k
 \largewedge^i T_pM\otimes \largewedge^{k-i} T_\xi S_pM$ for each $\xi\in S_pM$. Consequently, 
 fixing $p\in M$ and letting $\xi\in S_p M$ vary, these identifications yield a map 
 $$ \Omega^k(SM)|_{S_pM} \to \Omega^k(ST_pM)^{tr}$$
 to the space of translation-invariant $k$-forms on the sphere bundle of $T_pM$. Letting $p\in M$ vary we obtain
 an isomorphism
 $\tau\colon \Omega^k(SM) \to \Gamma( \Omega^{k}(STM)^{tr})$
  between smooth $k$-forms on $SM$ and smooth sections of the bundle $ \Omega^{k}(STM)^{tr}$ of smooth translation-invariant $k$-forms
  on $ST_pM$.
  
 Let $\bar \alpha$ denote the canonical contact $1$-form on $ST_pM$. It was shown in \cite{bfs}*{Proposition 2.5}
  that $\alpha,d\alpha$ correspond to $\bar \alpha, d\bar\alpha$ under the above identification. 
  Thus $\tau$ descends to a well-defined isomorphism 
  $$\tau\colon \calC(M)\to \Curv(TM).$$
We will also write $\tau_p \Phi$ instead of $\tau(\Phi)_p$. 
  
Recall from the introduction  that  a smooth curvature measure $\Phi$ on $M$ is called angular if $\tau_p\Phi \in\Curv(T_pM)$ is angular for all $p\in M$. 
The subspace of angular curvature measures is denoted by $\calA(M)$.  The curvature measure $\Phi$ is called angular at $p\in M$ if $\tau_p\Phi $  is angular.

\begin{lemma}
 A smooth curvature measure $\Psi\in\calC(M)$ is angular at $p$ if and only if there exists a  function $f\colon \coprod_k \Grass_k(T_p M)\to \RR$  such that
for every closed convex cone $C$ in $T_pM$  with lineality space $L(C)=L$
 $$c_{\tau_p\Psi}(C)=f(L) \gamma(C).$$ 
\end{lemma}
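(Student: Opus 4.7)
The statement is a local, pointwise version of Lemma~\ref{lemma:char_angularity}: angularity of $\Psi$ at $p\in M$ is defined to mean that $\tau_p\Psi\in\Curv(T_pM)$ is angular as a translation-invariant curvature measure on the Euclidean space $T_pM$, so the plan is simply to invoke Lemma~\ref{lemma:char_angularity} on the inner product space $T_pM$.

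\medskip

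Concretely, I would first choose any linear isometry $T_pM\cong\RR^n$, under which $\tau_p\Psi$ becomes a translation-invariant smooth curvature measure on $\RR^n$. By Lemma~\ref{lemma:char_angularity}, this curvature measure is angular if and only if there exists a function $f\colon\coprod_k\Grass_k(\RR^n)\to\RR$ such that $c_{\tau_p\Psi}(C)=f(L(C))\gamma(C)$ for every closed convex cone $C$. Transporting $f$ back to $\coprod_k\Grass_k(T_pM)$ via the isometry yields the function required in the statement. Since $c_{\tau_p\Psi}(C)$ and $\gamma(C)$ are defined intrinsically in terms of the Euclidean structure of $T_pM$, the characterization is independent of the chosen isometry.

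\medskip

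There is essentially no obstacle here: the content is already in Lemma~\ref{lemma:char_angularity}, and the only new point is the transfer of the definition of angularity from $\Curv(\RR^n)$ to $\Curv(T_pM)$ via $\tau_p$. The one thing worth noting explicitly in the write-up is that the definition of $c_\Psi$ given at the start of Section~\ref{subsec:ccc} makes sense verbatim for translation-invariant curvature measures on any finite-dimensional Euclidean space, and likewise for the external angle $\gamma$ (which by its definition in \eqref{eq:exterior_angle} and the subsequent remark on independence of ambient space is a genuinely intrinsic quantity of the cone $C\subset T_pM$). With these identifications in place the equivalence is immediate from Lemma~\ref{lemma:char_angularity}.
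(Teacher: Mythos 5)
Your proposal is correct and matches the paper's proof, which likewise just notes that the lemma is an immediate consequence of Lemma~\ref{lemma:char_angularity} applied to the translation-invariant curvature measure $\tau_p\Psi$ on the Euclidean space $T_pM$. Your remarks on transporting the definitions of $c_\Psi$ and $\gamma$ to an arbitrary inner product space are a reasonable, if routine, elaboration of what the paper leaves implicit.
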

\begin{proof}
 This is an immediate consequence of Lemma~\ref{lemma:char_angularity}.
\end{proof}

\section{Proof of Theorem~\ref{thm:ccc}}

\begin{lemma}\label{lemma:dim ccc} For $0\leq k<n-1$ the dimension of the space of $k$-homogeneous constant coefficient curvature measures on $\RR^n$ is 
 \begin{equation}
  \label{eq:dim ccc} \frac{1}{n-k+1}\binom{n}{k}\binom{n+1}{k+1}
 \end{equation}
\end{lemma}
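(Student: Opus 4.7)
Let $V = \RR^n$. The strategy is to identify the space of $k$-homogeneous constant coefficient curvature measures with the irreducible $SL(n,\CC)$-representation $\Gamma_{2\varpi_k}$, and then to compute its dimension via Weyl's formula. Consider the linear map
$$T\colon \largewedge^k V^* \otimes \largewedge^{n-k} V^* \to \calC(\RR^n), \qquad \omega \mapsto \Psi_\omega,$$
where $\Psi_\omega$ is defined by \eqref{eq:ccc}. The image of $T$ is by definition the space of $k$-homogeneous constant coefficient curvature measures, so the task reduces to computing $\dim \im T$.

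Since $\Psi_\omega$ is angular and $k$-homogeneous, Lemma~\ref{lemma:char_angularity} implies that $\Psi_\omega$ is determined by the function $f_\omega\colon \widetilde\Grass_k(\RR^n) \to \RR$ given by $f_\omega(L) = c_{\Psi_\omega}(L)$. A direct computation -- evaluating $\Psi_\omega$ on a flat polytope $P$ with tangent cone $L$ at the origin, so that $N_1(P)$ locally looks like $(L \cap B(0,r)) \times D(L^\perp)$ -- yields
$$f_\omega(L) = \omega_{n-k}\,\omega(\vec L, \star \vec L),$$
where $\star\colon \largewedge^k V \to \largewedge^{n-k} V$ is the Hodge star. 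Since $\star$ is linear in $\vec L$, this expresses $f_\omega$ as the restriction to the Plücker image of a quadratic polynomial on $\largewedge^k V$. Moreover, the kernels of $T$ and of $\omega \mapsto f_\omega$ coincide: clearly $T(\omega) = 0$ forces $f_\omega = 0$; conversely, if $f_\omega = 0$ then by the angularity identity $c_{\Psi_\omega}$ vanishes on every cone of $k$-dimensional lineality, and formula \eqref{eq:c Psi} (in which only $k$-faces contribute by $k$-homogeneity) forces $\Psi_\omega = 0$.

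Consequently $\dim \im T$ equals the dimension of the image of the composition
$$\largewedge^k V^* \otimes \largewedge^{n-k} V^* \stackrel{\sigma}{\twoheadrightarrow} \Sym^2(\largewedge^k V^*) \stackrel{\rho}{\longrightarrow} C^\infty(\widetilde\Grass_k),$$
where $\sigma\colon \omega \mapsto (\vec L \mapsto \omega(\vec L, \star \vec L))$ is surjective (the Hodge star identifies $\largewedge^{n-k} V^*$ with $\largewedge^k V^*$, and symmetrization is surjective), and $\rho$ is restriction to the Plücker image. The kernel of $\rho$ is the degree-two component $I_2 \subset \Sym^2(\largewedge^k V^*)$ of the ideal of the Plücker image, i.e., the space of quadratic Plücker relations. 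Hence $\dim \im T = \dim \Sym^2(\largewedge^k V^*)/I_2$.

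Finally, a standard fact (via Borel-Weil, $H^0(\Grass_k, \calO(2)) \cong \Gamma_{2\varpi_k}^*$, or via the Littlewood-Richardson rule, identifying $\Gamma_{2\varpi_k}$ as the Cartan component of $\Sym^2 \Gamma_{\varpi_k}$) identifies $\Sym^2(\largewedge^k V^*)/I_2$ with the irreducible $SL(n,\CC)$-representation of highest weight $2\varpi_k$, corresponding to the partition $\lambda = (2,\ldots,2,0,\ldots,0)$ with $k$ twos. Applying Weyl's formula \eqref{eq:weyl_dim} to $\lambda$, the factors with $i < j \le k$ or $k < i < j$ all equal $1$, while for $i \le k < j$ one has $\lambda_i - \lambda_j = 2$, so the double product telescopes to
$$\dim \Gamma_{2\varpi_k} = \prod_{i=1}^k \frac{(n-i+1)(n-i+2)}{(k-i+1)(k-i+2)} = \frac{1}{n-k+1}\binom{n}{k}\binom{n+1}{k+1},$$
as claimed. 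I expect the main obstacle to be the representation-theoretic identification $\Sym^2(\largewedge^k V^*)/I_2 \cong \Gamma_{2\varpi_k}$; the remaining steps are elementary once this is in hand.
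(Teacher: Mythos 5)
Your proof is correct (the final dimension is right) and the overall structure — identifying $\ker T$ via the quadratic form $\vec L \mapsto \omega(\vec L,\star\vec L)$ and then computing the image dimension representation-theoretically — is sound. However, it takes a genuinely different route from the paper's proof of this lemma. The paper's Lemma~\ref{lemma:dim ccc} works $SL(n)$-equivariantly with the condition $\omega|_{E\oplus E^\circ}=0$ on $\largewedge^k V^*\otimes\largewedge^{n-k}V$, observes that the canonical symplectic form on $V\oplus V^*$ lies in the kernel, uses the Lefschetz-type injectivity of wedging by the symplectic form to produce the submodule $\largewedge^{k-1}V^*\otimes\largewedge^{n-k-1}V$ inside the kernel, and then invokes Lemma~\ref{lemma:tensor decomp} to conclude this exhausts the kernel; the dimension comes out as $\binom{n}{k}^2-\binom{n}{k-1}\binom{n}{k+1}$. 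You instead factor through the restriction map $\Sym^2(\largewedge^k V^*)\to C^\infty(\widetilde\Grass_k)$ and identify the image with the Cartan component $\Gamma_{2\varpi_{n-k}}$ (degree-two part of the homogeneous coordinate ring of the Grassmannian, via Borel--Weil or the Littlewood--Richardson identification), then evaluate the Weyl dimension formula. In effect you are fusing the content of the paper's Lemma~\ref{lemma:plucker} with the kernel identification, which makes the argument more direct and highlights the connection to the Grassmannian earlier; the paper keeps these two dimension computations separate (it needs Lemma~\ref{lemma:plucker} independently in the proof of Theorem~\ref{thm:ccc}). The cost of your route is relying on the ``standard fact'' that $\Sym^2(\largewedge^k V^*)/I_2$ is irreducible, which the paper proves explicitly in Lemma~\ref{lemma:plucker}.

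One small slip: since $\largewedge^k V^*\cong\Gamma_{\varpi_{n-k}}$ as an $SL(n,\CC)$-representation, the Cartan component of $\Sym^2(\largewedge^k V^*)$ has highest weight $2\varpi_{n-k}$, not $2\varpi_k$ as you write. This does not affect the dimension, as $\Gamma_{2\varpi_k}$ and $\Gamma_{2\varpi_{n-k}}$ are dual and your Weyl-formula evaluation of the $(2,\ldots,2,0,\ldots,0)$ partition with $k$ twos indeed gives the stated number, but the attribution of the highest weight should be corrected for consistency with the paper's Lemmas~\ref{lemma:tensor decomp} and \ref{lemma:plucker}.
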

 \begin{proof}

Let $\Psi$ be a $k$-homogeneous constant coefficient curvature measure, say 
$$\Psi(P,U)= \int_{N_1(P)\cap\pi^{-1}(U)} \omega,$$
where $\omega\in  \largewedge^k\RR^{n*}\otimes \largewedge^{n-k} \RR^{n*}$. 
If $P\subset \RR^n$ is a polytope, then 
$$\Psi(P,U)=\sum f(\overline F)\gamma(F,P) \vol_k(F\cap U)$$
where the sum is over all $k$-dimensional faces $F$ of $P$ and the number $f(\overline F)$ depends only on the unique $k$-dimensional linear subspace  $\overline F$
parallel to the affine hull of $F$. Moreover, as functions on $\Grass_k(\RR^n)$,  $f$ and 
$$E\mapsto  \int_{B_E\times B_{E^\perp}} \omega,$$
where $B_E$ is the $k$-dimensional Euclidean unit ball in $E$, differ only by a constant nonzero multiple.
Hence  $\Psi=0$ if and only if $\omega|_{E\oplus E^\perp}=0$ for all
$k$-dimensional linear subspaces $E\subset \RR^n$. Let $U$ denote the subspace
of all $\omega\in  \largewedge^k\RR^{n*}\otimes \largewedge^{n-k} \RR^{n*}$ with this property. To prove the Lemma it suffices to determine the 
dimension of $U$. 

Let $V$ be an $n$-dimensional real vector space and let $E^\circ\subset V^*$ denote the annihilator of a linear subspace $E\subset V$. 
We introduce another subspace $W$, namely the  subspace of all $\omega\in \largewedge^k V^* \otimes \largewedge^{n-k} V$ with the property that $\omega|_{E\oplus E^\circ}=0$ 
for all $k$-dimensional subspaces $E$ of $V$. Observe that a choice of Euclidean inner product on $V$ yields a linear isomorphism from $W$ onto $U$.

Note that $W$ is an invariant subspace for the natural action of $SL(n,\RR)$ on $\largewedge^k V^* \otimes \largewedge^{n-k} V$;
its complexification $W^\CC$ consists of all complex-valued $n$-covectors on $V\oplus V^*$ of bidegree $(k,n-k)$ vanishing on 
$E\oplus E^\circ$ for all $k$-dimensional subspaces $E\subset V$. 
If $\omega$ is a multiple of the standard symplectic form on $V\oplus V^*$, then $\omega \in W^\CC$. Since moreover multiplication by the symplectic form is an isomorphism $\largewedge^{n-2}(V^*\oplus V)\to \largewedge^{n}(V^*\oplus V)$, we conclude that $W^\CC$ contains an 
$SL(n,\CC)$-submodule isomorphic to 
\begin{equation}\label{eq:submodule}\left(\largewedge^{k-1} V^* \otimes \largewedge^{n-k-1} V \right)^\CC=  \largewedge^{k-1} (V^\CC)^* \otimes \largewedge^{n-k-1} (V^\CC) \end{equation}
Here  we have used that complexification commutes with taking exterior powers and duals. As $SL(n,\CC)$-modules,  \eqref{eq:submodule} and 
$\largewedge^{k} (V^\CC)^* \otimes \largewedge^{n-k} (V^\CC)$ differ according to Lemma~\ref{lemma:tensor decomp} by precisely one irreducible component.
This proves that $W^\CC$ coincides with \eqref{eq:submodule}. Hence
$$\operatorname{codim}   U = \codim W =\binom{n}{k}^2 - \binom{n}{k-1}\binom{n}{k+1}= \frac{1}{n-k+1}\binom{n}{k}\binom{n+1}{k+1},$$
as required.
\end{proof}

Let $V$ be an $n$-dimensional real vector space and let  $ \Grass_k(V)$ denote the Grassmannian of $k$-dimensional linear subspaces of $V$.
Recall that the Pl\"ucker embedding, $\psi\colon \Grass_k(V)\to \PP(\largewedge^k V)$, is defined by 
$$\psi(E)=[e_1\wedge \cdots\wedge e_k]$$
where $e_1,\ldots,e_k$ is a basis of $E$. 
We will need the following well-known description of the restriction of $2$-homogeneous polynomials to the image of the Pl\"ucker embedding, see, e.g., \cite{fulton_harris}*{\S 15.4}. For the convenience of the reader
we include the short proof. 

\begin{lemma}\label{lemma:plucker} Under the natural action of $SL(n,\CC)$ the vector space 
$$\Sym^2(\largewedge^k V^*)^\CC/ \{ p\equiv 0 \text{ on } \im \psi\}^\CC$$
is irreducible with highest weight $2\varpi_{n-k}$. In particular, its dimension is
\begin{equation}\label{eq:plucker dim} \frac{1}{n-k+1}\binom{n}{k}\binom{n+1}{k+1}.\end{equation}
\end{lemma}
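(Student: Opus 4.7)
The plan is to realize the quotient as the irreducible $SL(n,\CC)$-subrepresentation $\Gamma_{2\varpi_{n-k}}$ of $\Sym^2(\largewedge^k V^*)^\CC$, invoking the second assertion of Lemma~\ref{lemma:tensor decomp}. First, I would construct a $GL(V)$-equivariant polarization map
$$\Phi\colon \Sym^k(\Sym^2 V^*)^\CC \to \Sym^2(\largewedge^k V^*)^\CC$$
obtained from the degree-$k$ polynomial map $q \mapsto \largewedge^k q$, where for $q \in \Sym^2 V^*$ the symmetric bilinear form $\largewedge^k q$ on $\largewedge^k V$ is defined by $(\largewedge^k q)(v_1 \wedge \cdots \wedge v_k,\, w_1 \wedge \cdots \wedge w_k) = \det(q(v_i,w_j))$. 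Since $\im \Phi$ is a submodule of $\Sym^2(\largewedge^k V^*)^\CC$ and simultaneously, by semisimplicity, isomorphic to a submodule of $\Sym^k(\Sym^2 V^*)^\CC$, Lemma~\ref{lemma:tensor decomp} forces $\im \Phi \subseteq \Gamma_{2\varpi_{n-k}}$; this inclusion is an equality because $\largewedge^k q \neq 0$ for any nondegenerate $q$.

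Next I would observe that the restriction of $\largewedge^k q$ to the Pl\"ucker image $\im \psi$ is the function $E \mapsto \det(q|_E)$, which is not identically zero for nondegenerate $q$. Consequently the composition of $\Phi$ with restriction to functions on $\im \psi$ is nonzero; by irreducibility of $\Gamma_{2\varpi_{n-k}}$ this restriction is injective on $\im \Phi$, and the quotient therefore contains a copy of $\Gamma_{2\varpi_{n-k}}$.

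To complete the identification of the quotient with $\Gamma_{2\varpi_{n-k}}$, I would invoke the classical description of the Pl\"ucker ideal in degree 2 as the $SL(n,\CC)$-equivariant complement of $\Gamma_{2\varpi_{n-k}}$ in $\Sym^2(\largewedge^k V^*)^\CC$; see \cite{fulton_harris}*{\S 15.4}. The stated dimension \eqref{eq:plucker dim} then follows by applying the Weyl dimension formula \eqref{eq:weyl_dim} to the partition $\lambda = (2,\ldots,2,0,\ldots,0)$ with $n-k$ twos and simplifying. The main obstacle will be this last step: Lemma~\ref{lemma:tensor decomp} alone only ensures that the quotient contains $\Gamma_{2\varpi_{n-k}}$, and showing nothing more survives genuinely requires input from the geometry of the Pl\"ucker embedding beyond what the representation-theoretic tools developed thus far provide.
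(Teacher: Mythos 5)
Your concern at the end is exactly right, and it points to a genuine gap that your proposal does not close. The map $\Phi$ goes in the wrong direction: it exhibits a copy of $\Gamma_{2\varpi_{n-k}}$ that survives in the quotient, but says nothing about whether anything else survives. Closing that hole via \cite{fulton_harris}*{\S 15.4} would indeed amount to importing the degree-$2$ description of the Pl\"ucker ideal, which is essentially the statement being proved.

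The paper avoids this by working with the map in the opposite direction,
$$f\colon \Sym^2(\largewedge^k V^*) \to \Sym^k(\Sym^2 V^*), \qquad f(p)(v_1,\ldots,v_k) = p(v_1 \wedge \cdots \wedge v_k),$$
whose kernel is \emph{by construction} exactly $\{p \equiv 0 \text{ on } \im\psi\}$: a $2$-homogeneous polynomial on $\largewedge^k V$ vanishes on decomposable $k$-vectors if and only if the associated multi-quadratic polynomial on $V^k$ is identically zero. Thus the quotient in the lemma is isomorphic to $\im f^\CC$, which is simultaneously a quotient of $\Sym^2(\largewedge^k(V^\CC)^*)$ and a submodule of $\Sym^k(\Sym^2(V^\CC)^*)$. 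By semisimplicity and the second assertion of Lemma~\ref{lemma:tensor decomp}, the only irreducible that can occur in both is $\Gamma_{2\varpi_{n-k}}$, with multiplicity one; since $\im\psi$ spans $\largewedge^k V$ the map $f$ is not zero, so $\im f^\CC \cong \Gamma_{2\varpi_{n-k}}$ and the Weyl dimension formula finishes the proof as you indicated. Your construction of $\Phi$ via $q\mapsto \largewedge^k q$ and the computation of its restriction to $\im\psi$ are correct and instructive, but this only delivers the ``contains'' half; reversing the direction of the map, so that the Pl\"ucker relations appear as a kernel rather than as a hoped-for complement, is precisely what makes the representation-theoretic argument self-contained.
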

\begin{proof}
Note that each $p\in \Sym^2(\largewedge^k V^*)$ defines a quadratic polynomial  
$$(v_1,\ldots,v_k)\mapsto p(v_1\wedge \cdots \wedge v_k)$$
on $V^k$ that is  $2$-homogeneous in each argument and symmetric. Thus we have a natural map 
$$f\colon \Sym^2(\largewedge^k V^*) \to \Sym^k(\Sym^2 V^*)$$
Obviously, $\ker f =\{ p\equiv 0 \text{ on } \im \psi\}$. 

 Since complexification commutes with taking  symmetric powers, exterior powers, and duals, the complexification 
of $f$ is a homomorphism of $SL(n,\CC)$-modules 
$$f^\CC\colon \Sym^2(\largewedge^k (V^\CC)^*) \to \Sym^k(\Sym^2 (V^\CC)^*).$$
Note that $\ker f^\CC =(\ker f)^\CC$. 
Lemma~\ref{lemma:tensor decomp} implies that   $\Sym^2(\largewedge^k (V^\CC)^*)$ and  $\Sym^k(\Sym^2 (V^\CC)^*)$ have precisely one common $SL(n,\CC)$-submodule, namely the one with highest weight 
$2\varpi_{n-k}$; the Weyl dimension formula \eqref{eq:weyl_dim} yields the  desired conclusion.
\end{proof}

Recall from the introduction that  $\widetilde\Grass_k(\RR^n)$ denotes the oriented Grassmannian and that we call a function on $\widetilde\Grass_k(\RR^n)$ even if it is invariant under change of orientation. 
The oriented Grassmannian smoothly embeds into $\largewedge^k\RR^n$ as $E\mapsto \vec E$, where $\vec E=e_1\wedge \cdots \wedge e_k$ 
for some positively oriented orthonormal basis of $E$. We also call this map the Pl\"ucker embedding. Observe that it respects the natural actions of $SO(n)$ on 
$\widetilde\Grass_k(\RR^n)$ and $\largewedge^k\RR^n$.  Forgetting about orientations gives a map
$\widetilde\Grass_k(\RR^n)\to \Grass_k(\RR^n)$. 

\begin{lemma}\label{lemma:highest_weights_restrictions}
 The subspace of $C(\widetilde\Grass_k(\RR^n))$ of restrictions of $2$-homogeneous polynomials on $\largewedge^k \RR^n$  to $\widetilde \Grass_k(\RR^n)$ decomposes under the natural action of 
 $SO(n)$  precisely into the  irreducible representation with highest weights $(2 m_1, 2 m_2,\ldots, 2 m_{\lfloor n/2\rfloor})$ satisfying 
 $|m_{1}|\leq 1$ and $m_i=0$ for $i>\min(k,n-k)$.  
 
\end{lemma}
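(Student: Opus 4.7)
The approach is to combine the $SL(n,\CC)$-module identification of Lemma~\ref{lemma:plucker} with the branching rule of Lemma~\ref{lemma:littlewood}, invoking the Hodge star operator to pass between the $k$ and $(n-k)$ pictures when the partition falls outside the stable range.

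Since every $2$-homogeneous polynomial is even under $\vec E\mapsto -\vec E$, its restriction to $\widetilde\Grass_k(\RR^n)$ factors through the projective Pl\"ucker image, so the complexified restriction space coincides with the quotient $\Sym^2(\largewedge^k V^*)^\CC/\{p\equiv 0 \text{ on } \im\psi\}^\CC$ studied in Lemma~\ref{lemma:plucker}. That lemma identifies it, as an $SL(n,\CC)$-module, with the irreducible representation $\Gamma_\lambda$ of highest weight $2\varpi_{n-k}$, corresponding to the partition $\lambda=(2,\ldots,2,0,\ldots,0)$ with $n-k$ twos.

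Lemma~\ref{lemma:littlewood} requires $\lambda$ to have at most $\lfloor n/2\rfloor$ parts, which fails precisely when $k<n/2$. The Hodge star operator $\ast\colon\largewedge^k\RR^n\to\largewedge^{n-k}\RR^n$ resolves this: it is $SO(n)$-equivariant and, up to a uniform sign, sends the oriented Pl\"ucker image of $\widetilde\Grass_k(\RR^n)$ onto that of $\widetilde\Grass_{n-k}(\RR^n)$. Pulling back $2$-homogeneous polynomials therefore yields an $SO(n)$-isomorphism between the restriction spaces for $k$ and for $n-k$, whose complexification is identified by a second application of Lemma~\ref{lemma:plucker} with $\Gamma_\nu$ for $\nu=(2,\ldots,2,0,\ldots,0)$ having $k$ twos. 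Consequently, as an $SO(n,\CC)$-module our space can always be realized with at most $\min(k,n-k)\leq n/2$ twos, which lies in the stable range.

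Lemma~\ref{lemma:littlewood}, applied with its $k$ taken to be $\min(k,n-k)$, then decomposes the restriction as $\bigoplus\Gamma_\mu$ over sequences $\mu=(\mu_1,\ldots,\mu_{\min(k,n-k)},0,\ldots,0)$ of even integers with $|\mu_1|\leq 2$; writing $\mu_i=2m_i$ produces the stated parametrization, and the passage to the compact real group $SO(n)$ is automatic since the restriction space is defined over $\RR$. The main technical point will be the Hodge star identification: since under $SL(n,\CC)$ the representations $\Gamma_{2\varpi_k}$ and $\Gamma_{2\varpi_{n-k}}$ are dual and in general non-isomorphic, this step genuinely uses the orthogonal structure, and one must check that $\ast\vec E=\pm\vec{E^\perp}$ for a coherent choice of orientations.
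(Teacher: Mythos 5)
Your proposal is correct and follows essentially the same route as the paper: identify the complexified restriction space with the irreducible $SL(n,\CC)$-module $\Gamma_{2\varpi_{n-k}}$ via Lemma~\ref{lemma:plucker} and then branch to $SO(n,\CC)$ via Lemma~\ref{lemma:littlewood}. The paper achieves the reduction to the stable range with the terse phrase ``by taking orthogonal complements, we may assume $k\geq\lfloor n/2\rfloor$'', which is exactly your Hodge star argument (indeed $\ast\vec E=\vec{E^\perp}$ for the induced orientation on $E^\perp$), so your more explicit treatment of this step is a welcome elaboration of, not a departure from, the intended proof.
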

\begin{proof} By taking orthogonal complements, we may assume without loss of generality that $k\geq \lfloor n/2\rfloor$. 
 The space of restrictions of $2$-homogeneous polynomials  to $\widetilde \Grass_k(\RR^n)$ is $SO(n)$-equivariantly isomorphic 
 to $$\Sym^2(\largewedge^k V^*)/ \{ p\equiv 0 \text{ on } \im (\psi\colon \Grass_k(V)\to \PP(\largewedge^k V))\},$$
 where $V=\RR^n$. By Lemma~\ref{lemma:plucker}, the complexification of this space is precisely the restriction of the $SL(n,\CC)$-representation 
 with highest weight $2\varpi_{n-k}$ to $SO(n)$.   An application of Lemma~\ref{lemma:littlewood} completes the proof of the Lemma.
\end{proof}

The assumption $k<n-1$ in Theorem~\ref{thm:ccc} is needed for the following

\begin{proposition}\label{prop:polynomial_restriction}
 Let $1\leq k <n-1$ and $f$ be an even continuous function on $\widetilde{\Grass}_k(\RR^n)$. Suppose that for every nonzero $v\in \RR^n$ there exists a 
 $2$-homogeneous polynomial $q$ on $\largewedge^k v^\perp$ such that $f=q$ on $\widetilde{\Grass}_k(v^\perp)\subset \largewedge^k v^\perp$. Then there exists  
  a globally defined $2$-homogeneous polynomial $q$ on $\largewedge^k \RR^n$ such that 
 $f=q$ on $\widetilde{\Grass}_k(\RR^n)\subset \largewedge^k \RR^n$.
\end{proposition}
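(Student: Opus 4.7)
My plan is to use the $SO(n)$-action on $\widetilde\Grass_k(\RR^n)$ and exploit the isotypic decomposition of $C(\widetilde\Grass_k(\RR^n))$, matching it against Lemma~\ref{lemma:highest_weights_restrictions}. Let $H\subset C(\widetilde\Grass_k(\RR^n))$ denote the space of even continuous $f$ satisfying the hypothesis, and $P\subset C(\widetilde\Grass_k(\RR^n))$ the space of restrictions of $2$-homogeneous polynomials on $\largewedge^k\RR^n$ (via the Pl\"ucker embedding). The inclusion $P\subset H$ is clear; the content is the reverse. First, I would observe that $H$ is $SO(n)$-invariant (rotating both $v$ and $f$ preserves the hypothesis) and closed (since each fiber $P_{n-1,k}(v)$ is finite-dimensional and the restriction maps are continuous). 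By Peter--Weyl, $H$ decomposes into $SO(n)$-isotypic components, and by Lemma~\ref{lemma:highest_weights_restrictions} the isotypic components of $P$ are exactly the irreducibles $V_\lambda$ with \emph{admissible} highest weights $\lambda=(2m_1,\ldots,2m_{\lfloor n/2\rfloor})$ satisfying $|m_1|\leq 1$ and $m_i=0$ for $i>\min(k,n-k)$. It therefore suffices to show that no inadmissible $V_\lambda$-isotypic component embeds into $H$.

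To do so I would argue by contradiction: suppose some inadmissible $V_\lambda$ sits inside $H$. Fix $v_0=e_n$ and consider the $SO(n-1)$-equivariant restriction $r_{v_0}\colon C(\widetilde\Grass_k(\RR^n))\to C(\widetilde\Grass_k(v_0^\perp))$. By hypothesis $r_{v_0}(V_\lambda)\subset P_{n-1,k}$, and by Lemma~\ref{lemma:highest_weights_restrictions} applied to $n-1$ this means the image has no inadmissible $SO(n-1)$-isotypic component. A case analysis of the branching rules $SO(n)\downarrow SO(n-1)$ (split according to whether $\lambda$ has an odd entry, $\lambda_1\geq 4$, or some nonzero $\lambda_j$ with $j>\min(k,n-k)$), using the interlacing constraint $\mu_i\in[\lambda_{i+1},\lambda_i]$, produces an inadmissible $SO(n-1)$-irreducible $V_\mu$ appearing in $V_\lambda|_{SO(n-1)}$ and also in $C(\widetilde\Grass_k(\RR^{n-1}))$, i.e., with $V_\mu^{SO(k)\times SO(n-1-k)}\neq 0$.

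The final step is to leverage the vanishing of the $V_\mu$-isotypic part of $r_{v_0}$ on $V_\lambda\subset H$, combined with $SO(n)$-covariance $r_{gv_0}(g\cdot f)=g\cdot r_{v_0}(f)$ and the injectivity of the total restriction $f\mapsto(r_v(f))_v$ on $C(\widetilde\Grass_k(\RR^n))$ (which follows from $\bigcup_v\widetilde\Grass_k(v^\perp)=\widetilde\Grass_k(\RR^n)$ since $k<n$), to conclude that the embedded $V_\lambda$-copy is zero. This is the main obstacle; the cleanest route is a Frobenius-reciprocity dimension count comparing the multiplicity of $V_\lambda$ in $\mathrm{Ind}_{SO(n-1)}^{SO(n)}P_{n-1,k}$ with that in $C(\widetilde\Grass_k(\RR^n))$ and showing that the excess of the latter over the former is accounted for precisely by inadmissible $\mu$-components. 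The role of the hypothesis $k<n-1$ is that it ensures $\widetilde\Grass_k(v^\perp)$ is positive-dimensional so the restriction carries nontrivial information; in the borderline case $k=n-1$ the fiber is a two-point set, the hypothesis is vacuous, and the proposition indeed fails.
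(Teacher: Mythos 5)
Your initial reduction coincides with the paper's: both use $SO(n)$-equivariance and Strichartz's multiplicity-one decomposition of $C(\widetilde\Grass_k(\RR^n))$ to reduce the proposition to showing that no inadmissible irreducible $V_\lambda$ (i.e., $m_1\geq 2$) embeds into the space $H$. From there, however, the paper does not use branching rules at all. Instead it invokes the \emph{explicit} Strichartz highest weight vector $f_{m_1,\ldots,m_{k'}}$, evaluates it on a concrete one-parameter family of $k$-planes $E(\phi)\subset e_2^\perp$, obtains $(\cos\phi)^{2m_1}$, and observes directly that this is not the restriction of a quadratic polynomial when $|m_1|>1$; this immediately shows $f_\lambda\notin H$.

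Your proposed branching-rule route has a genuine gap at exactly the point you flag as the ``main obstacle.'' Knowing abstractly that an inadmissible $\mu$ appears in both $V_\lambda|_{SO(n-1)}$ and $C(\widetilde\Grass_k(\RR^{n-1}))$ does \emph{not} imply that the concrete restriction map $r_{v_0}\colon V_\lambda\to C(\widetilde\Grass_k(v_0^\perp))$ has nonzero image in that $V_\mu$-isotypic component: $r_{v_0}$ could kill it. Your injectivity observation only yields that $r_{v_0}(V_\lambda)\neq 0$, not that the inadmissible component of the image is nonzero; and the Frobenius-reciprocity count you sketch computes the multiplicity of $V_\lambda$ in $\mathrm{Ind}_{SO(n-1)}^{SO(n)}P_{n-1,k}$, which is the number of admissible $\mu$ interlacing $\lambda$ --- a positive number even for inadmissible $\lambda$ --- so it does not produce a contradiction. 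To close this gap one has to know, for at least one $v_0$, that a highest weight vector of $V_\lambda$ restricts to a nonzero inadmissible function on $\widetilde\Grass_k(v_0^\perp)$; but establishing that seems to require the explicit Strichartz formula, at which point one is essentially running the paper's direct computation. So the abstract detour does not actually avoid the concrete step, and in the form stated the argument is incomplete.
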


\begin{proof}

 Let $W\subset C(\widetilde{\Grass}_k(\RR^n))$ be the subspace of all even continuous functions $f$ on $\widetilde{\Grass}_k(\RR^n)$ with the property that for every nonzero $v\in \RR^n$ there exists a 
 $2$-homogeneous polynomial $q$ on $\largewedge^k v^\perp$ such that $f=q$ on $\widetilde{\Grass}_k(v^\perp)\subset \largewedge^k v^\perp$.
 We have to show that $W$ coincides with the space of restrictions of $2$-homogeneous polynomials on  $\largewedge^k \RR^n$ to  $\widetilde{\Grass}_k(\RR^n)$. 
 
 To begin with, the subspace $W$ is invariant under the natural action of $SO(n)$ and hence decomposes into certain irreducible subrepresentations. 
 The highest weights occurring in the representation $C(\Grass_k(\RR^n))$ are well
known. According to Strichartz \cite{strichartz} they all have multiplicity $1$ and are of the form 
$(2m_1,\ldots, 2m_{k'},0\ldots, 0)$, $k'=\min(k,n-k)$, 
with integers satisfying 
$$m_1\geq \ldots \geq m_{k'-1}\geq  |m_{k'}|\qquad \text{and} \qquad \text{if}\  2k'<n,\ m_k'\geq 0.$$
By Lemma~\ref{lemma:highest_weights_restrictions}, the proof will be finished if we can show that only the highest weights with
 $$ |m_1|\leq 1$$
 can occur in $W$. For this it will suffice to show that for all other highest weights the corresponding highest weight vector $f_{m_1,\ldots, m_{k'}}$ is not element of $W$. This step will be based on an 
 explicit description of the highest weight vectors given by Strichartz  \cites{strichartz} (see also \cite{alesker_hard_lefschetz}).
 
In the following we may assume without loss of generality $k\leq \lfloor n/2\rfloor$; indeed, if $k>\lfloor n/2 \rfloor$, then $f_{m_1,\ldots,m_{k'}}\circ \perp$ 
is a highest weight vector for the irreducible subrepresentation of 
highest weight $(2m_1,\ldots, 2m_{k'},0\ldots, 0)$ and we may repeat the argument given below for the case $k\leq \lfloor n/2\rfloor$ with $E^\perp$.

For any subspace $E\in \widetilde{\Grass}_k(\RR^n)$ choose an orthonormal basis $X^1,\ldots, X^k$ of $E$ and consider the corresponding $n \times k$ matrix 
 $$\begin{pmatrix}
    X^1_1  & \cdots &   X^k_1\\
    \vdots &  & \vdots \\
   X^1_{n}  &  \cdots &   X^k_{n}\\ 
   \end{pmatrix}
$$
of coordinates with respect to the standard basis $e_1,\ldots, e_{n}$ of $\RR^{n}$.
Let $X_j$ denote the $j$th row of this matrix. For $l\leq \lfloor n/2 \rfloor$ let $A(l)$ be the $l\times k$ matrix whose $j$th row is $X_{2j-1} + 
\sqrt{-1} X_{2j}$, $j=1,\ldots, l$. Note that the $l\times l$ matrix  $A(l)A(l)^t$ is independent of the choice of the orthonormal basis of $E$.

A  highest weight vector of the irreducible subrepresentation of $C(\widetilde{\Grass}_k(\RR^n))$ with highest weight $(2m_1,\ldots, 2m_k,0,\ldots, 0)$  
is given by 
\begin{enumerate}
 \item if $m_k\geq 0$,  $$ f_{m_1,\ldots, m_k}= \prod_{l=1}^{k}\det\left( A(l)A(l)^t\right)^{m_l-m_{l+1}},$$ where we set $m_{k+1}=0$;  
 \item if $m_k<0$,
 $$ f_{m_1,\ldots, m_k}= \prod_{l=1}^{k-1}\det\left( A(l)A(l)^t\right)^{m_l-|m_{l+1}|} \overline{ \det\left( A(k)A(k)^t\right)}^{|m_k|}.$$
\end{enumerate}

If $n\geq 4$  we evaluate  the highest weight vectors on $k$-planes  of the form
  $$E =\langle \cos\phi e_1 + \sin\phi  e_4\rangle\oplus \langle e_3, e_5,\ldots, e_{2k-1}\rangle\subset e_2^\perp. $$
If $n=3$ we choose $E=\langle \cos\phi e_1 + \sin\phi  e_3\rangle$. Now
$$\det(A(l)A(l)^t) =  \cos^2 \phi$$
for $l=1,\ldots,k$. Hence
$$f_{m_1,\ldots, m_k}(E) = (\cos\phi)^{2m_1},$$
which is not the restriction of a quadratic polynomial if $|m_1|>1$.   
\end{proof}

\begin{proof}[Proof of Theorem~\ref{thm:ccc}]

Since the case $k=0$ is trivial, we consider here only 
$1\leq k<n-1$.  Let $\Phi$ be an angular curvature measure given by a translation-invariant $(n-1)$-form $\varphi$ of bidegree $(k,n-k-1)$. 
By \eqref{eq:c forms} and Lemma~\ref{lemma:char_angularity} there exists an even  smooth function  $f$ on the oriented Grassmannian  $\widetilde\Grass_k(\RR^n)$ 
such that 
$$f( E)\vol_{n-k-1}(S^{n-1}\cap C) =\int_{S^{n-1}\cap C} \vec E\lrcorner \varphi$$
for every full-dimensional closed convex cone $C$ in $E^\perp$. Letting $C$ shrink to a ray we obtain
 $$f(E)\,  =  \langle \vec E\lrcorner \varphi|_{ S(E^\perp),p },  \vol_{S(E^\perp),p} \rangle, \qquad E\in \widetilde\Grass_k(\RR^n),$$
 for all $p\in S(E^\perp)$, where $\vol_{S(E^\perp)}$ denotes the volume form of the sphere $S(E^\perp)$ and $\langle \, ,\, \rangle $ is the standard inner product on $\largewedge^{n-k-1}\RR^{n*}$. 
Now fix a point $p\in S^{n-1}$. Then 
\begin{equation} \label{eq:2-homogeneous}f( E)= \langle \vec E\lrcorner \varphi_p, \vec E \lrcorner \vol_{S^{n-1},p}\rangle\end{equation}
for all $E\perp p$, since $\vec E \lrcorner\vol_{S^{n-1},p}= \vol_{S(E^\perp),p}$.  
The right-hand side of \eqref{eq:2-homogeneous} is  clearly a $2$-homogeneous polynomial on the subspace
$\largewedge^k p^\perp\subset \largewedge^k \RR^n$ restricted to the image of the Pl\"ucker embedding 
of $\widetilde\Grass_k(p^\perp)$. Proposition~\ref{prop:polynomial_restriction} implies that 
 $f$ is the restriction of a $2$-homogeneous polynomial on $\largewedge^k\RR^n$ to $\widetilde\Grass_k(\RR^n)$.  The dimension of the space of all such 
 restrictions is given by \eqref{eq:plucker dim}. Since  $f$ determines $\Phi$ as we have already seen in \eqref{eq:c Psi}, the space of angular curvature measures
 coincides by Lemma~\ref{lemma:dim ccc} with the space of constant coefficient curvature measures.
\end{proof}

\section{Proof of Theorem~\ref{thm:pullback}}

Our point of departure is  a description of the transfer map $\tau$ mentioned already in   \cite{bfs}*{p. 415}. 
 
 \begin{lemma} \label{lemma:bfs}
 Let  $M$ be a Riemannian manifold,  $\Psi$ a smooth curvature measure on $M$, and $C\subset T_pM$ a closed convex cone with lineality space $L$ such that $pr_{T_pM/L}(C)$ is simplicial. 
 For sufficiently small $\varepsilon>0$,
 $$c_{\tau_p\Psi}( C)= \lim_{r\to 0} \frac{1}{\omega_k r^k}  \Psi(\exp(C\cap  B(0,\varepsilon)),  \exp(L\cap B(0,r))  ),$$
 where $k=\dim L$.  
 \end{lemma}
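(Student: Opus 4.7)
The plan is to expand the right-hand side using $\Psi=(\psi,\phi)$, invoke Lemma~\ref{lemma:fiber_int} to rewrite the resulting normal cycle integral as an integral of a smooth $k$-form over the disc $\exp_p(L\cap B(0,r))$, pass to the limit $r\to 0$, and match the outcome with formula \eqref{eq:c forms} for $c_{\tau_p\Psi}(C)$.

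First I would use the simplicial assumption on $pr_{T_pM/L}(C)$ to choose linear coordinates on $T_pM$ adapted to $L$ in which $C$ becomes $\RR^k\times[0,\infty)^{n-k}$; composing with $\exp_p$ gives a chart in which, for $\varepsilon$ small enough, $P:=\exp_p(C\cap B(0,\varepsilon))$ is a submanifold with corners and the relative interior $F$ of $\exp_p(L\cap B(0,\varepsilon))$ consists entirely of type-$k$ points of $P$. In the main case $k<n$, the $n$-form $\phi$ integrates to zero over the $k$-dimensional set $\exp_p(L\cap B(0,r))$, so
\begin{equation*}
\Psi(P,\exp_p(L\cap B(0,r))) = \int_{N(P)\cap \pi^{-1}(\exp_p(L\cap B(0,r)))}\psi.
\end{equation*}
The trivial case $k=n$ can be handled separately: the fiber of $N(P)$ over $p$ is empty, and $\int_{\exp_p B(0,r)}\phi\sim\omega_n r^n\phi_{p,\mathrm{dens}}$ matches $c_{\tau_p\Psi}(T_pM)$ directly.

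Next, applying Lemma~\ref{lemma:fiber_int} with the $k$-dimensional submanifold $F$ rewrites the normal cycle integral as $\int_{\exp_p(L\cap B(0,r))}\eta$, where $\eta$ is the smooth $k$-form on $F$ defined by $\eta_q=\int_{N(P)\cap\pi^{-1}(q)}r_q(j^*\psi)$. Pulling back along $\exp_p$ and using $(d\exp_p)_0=\id$, a standard mean value argument yields
\begin{equation*}
\frac{1}{\omega_k r^k}\int_{\exp_p(L\cap B(0,r))}\eta \longrightarrow \eta_p(\vec L)\qquad\text{as } r\to 0.
\end{equation*}
To finish, I would identify $\eta_p(\vec L)$ with $c_{\tau_p\Psi}(C)$. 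Since $(d\exp_p)_0=\id$, the tangent cone of $P$ at $p$ is $C$ and $N(P)\cap\pi^{-1}(p)=C^\circ\cap S_pM$. Moreover $F$ is the exponential image of $T_pF=L$, so Lemma~\ref{lemma:lifts_S} forces the horizontal lift of any $X\in L$ to $\xi\in S_pM\cap L^\perp$ computed in $\nu F$ to coincide with the horizontal lift into $SM$. These are exactly the lifts used to build $\tau$, so $r_p(j^*\psi)$ contracted with $\vec L$ agrees with the vertical restriction of $\vec L\lrcorner\tau_p\psi$, and \eqref{eq:c forms} gives $\eta_p(\vec L)=\int_{C^\circ\cap S_pM}\vec L\lrcorner(\tau_p\psi)=c_{\tau_p\Psi}(C)$.

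The central technical point is the compatibility of two different horizontal-lift constructions, the one used to linearize the tangent spaces of $SM$ in the definition of $\tau$ and the one on the normal bundle of the intrinsic face $F$. Lemma~\ref{lemma:lifts_S} supplies exactly this, being applicable because $F$ is the exponential image of $T_pF$; the simplicial hypothesis enters only to guarantee that $F$ lies in the type-$k$ locus of $P$ so that Lemma~\ref{lemma:fiber_int} is available.
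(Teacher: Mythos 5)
Your proposal is correct and follows essentially the same route as the paper's proof: expand via forms, apply the fiber-integration Lemma~\ref{lemma:fiber_int} along $F=\exp_p(L\cap B(0,\varepsilon))$, pass to the limit using $d\exp_0=\id$, and invoke Lemma~\ref{lemma:lifts_S} to reconcile the two horizontal-lift constructions before comparing with \eqref{eq:c forms}. You are somewhat more explicit than the paper about why the $\phi$-term drops and why the simplicial hypothesis makes $P$ a submanifold with corners, but the structure of the argument is identical.
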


 \begin{proof}
 Since the case $k=n$ is trivial, we  consider only $k<n$. Suppose that 
  $$\Psi(P,U)= \int_{N(P)\cap \pi^{-1}(U)} \omega$$
  with $\omega\in\Omega^{n-1}(SM)$ and put $F= \exp(L\cap B(0,\varepsilon))$. 
  Since $\exp(C\cap B(0,\varepsilon))$ is for sufficiently small $\varepsilon>0$ a compact smooth submanifold with corners, 
  an application of  Lemma~\ref{lemma:fiber_int}  shows 
  $$\Psi(\exp(C\cap  B(0,\varepsilon)), \exp( L\cap B(0,r))) 
  = \int_{F \cap B(0,r)}  \int_{N(\exp(C\cap  B(0,\varepsilon)))  \cap \pi^{-1}(q)} r_q(j^*\omega),
$$
where $j\colon \nu F\to SM$ is the inclusion of the unit normal bundle of $F$ into $SM$ and $r$ denotes restriction to the fibers of $\nu F$. 
Dividing by $\omega_k r^k$ and passing to the limit gives 
\begin{align*}\lim_{r\to 0} \frac{1}{\omega_k r^k}\Psi(\exp(C\cap  B(0,\varepsilon)), & \exp( L\cap B(0,r))) \\
&= \vec T_pF \lrcorner\int_{ N(\exp(C\cap  B(0,\varepsilon)))   \cap \pi^{-1}(p)}    r_p(j^*\omega) \\
&= \vec T_pF\lrcorner \int_{C^\circ\cap S^{n-1}}   r_p(j^*\omega)
\end{align*}
where $\vec{T}_pF = X_1\wedge \cdots \wedge X_k$ for some suitably oriented orthonormal basis of $T_p F$;
the last equality follows from $d\exp_0 = \id_{T_pM}$. 

By Lemma~\ref{lemma:lifts_S} the horizontal lifts  $\tilde X_1,\ldots, \tilde X_k$  of $X_1,\ldots, X_k$ 
to the unit normal bundle  $\nu F$ of $F\subset M$ coincide with the horizontal lifts to the sphere bundle $SM$. 
 Thus if $V_1,\ldots, V_{n-k-1}$ are tangent to the fiber $\nu F_p$ at $\xi\in \nu F_p$ then
\begin{align*}\vec{T}_pF\lrcorner r_p(j^*\omega)(V_1,\ldots, V_{n-k-1}) &  = \omega_\xi(V_1,\ldots, V_{n-k-1}, \tilde X_1, \ldots, \tilde X_k)\\
 & = \vec{T}_pF \lrcorner \tau_p\omega(V_1,\ldots, V_{n-k-1}),
\end{align*}
by the definition of $\tau_p\omega$. Comparing with \eqref{eq:c forms} concludes the proof.

 \end{proof}

Decomposing $T_\xi TM$ into horizontal and vertical subspaces, we get a canonical isomorphism 
$$T_\xi TM = H_\xi \oplus V_\xi \cong T_pM\oplus T_pM$$
for every $\xi \in TM$ with $\pi(\xi)= p$. Similar to the  discussion below \eqref{eq_tangent_space_iso} we see that this decomposition induces an 
isomorphism 
$$\tau \colon \Omega^k(TM)\to \Gamma( \Omega^k(TTM)^{tr})$$
between smooth $k$-forms on $TM$ and smooth sections of the bundle $\Omega^k(TTM)^{tr}$ of translation-invariant $k$-forms on $T T_p M$.

If $\omega$ is a smooth $n$-form on $TM$, we denote by $[\omega]$ the induced curvature measure on $M$,
$$[\omega](P,U)= \int_{N_1(P)\cap \pi^{-1}(U)}\omega.$$
Similarly, if $\eta$ is a smooth section of the bundle
$ \Omega^n(TTM)^{tr}$, we denote by $[\eta]$ the induced smooth section of $\Curv(TM)$. 

\begin{lemma}\label{lemma:tau}
 $\tau[\omega]=[\tau \omega]$ for every $\omega\in \Omega^n(TM)$. 
\end{lemma}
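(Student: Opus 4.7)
The plan is to reduce to the cone-functional. By Lemma~\ref{lemma:char_angularity} and more precisely by \eqref{eq:c Psi}, a translation-invariant smooth curvature measure on a Euclidean space is determined by its values $c_\Psi$ on closed convex cones. It therefore suffices to fix $p \in M$ and verify
\begin{equation*}
c_{\tau_p[\omega]}(C) = c_{[\tau_p\omega]}(C)
\end{equation*}
for every closed convex cone $C \subset T_pM$; by Lemma~\ref{lemma:triangulation} and the finite additivity of $c_{\tau_p[\omega]}$ we may restrict to cones whose projection modulo the lineality space $L = L(C)$ of dimension $k$ is simplicial.

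First I would transcribe Lemma~\ref{lemma:fiber_int} and Lemma~\ref{lemma:bfs} to the normal disc setting: for $\omega \in \Omega^n(TM)$, a compact submanifold with corners $P \subset M$, and an oriented $k$-dimensional embedded submanifold $F \subset P$ consisting only of type-$k$ points, one has
\begin{equation*}
\int_{N_1(P) \cap \pi^{-1}(F)} \omega = \int_F \int_{N_1(P) \cap \pi^{-1}(q)} r_q(j_T^*\omega),
\end{equation*}
where $j_T\colon \nu_T F \hookrightarrow TM$ is the inclusion of the normal (disc) bundle of $F$ and $r_q$ denotes restriction to the fibers of $\nu_T F$. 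The proof is a verbatim copy of Lemma~\ref{lemma:fiber_int}, since $N_1(\cdot)$ arises from the normal cycle via the cone map $m(t,u)=tu$ and the zero section, both of which are compatible with fiber integration. Applying this to $P = \exp(C \cap B(0,\varepsilon))$ and $F = \exp(L \cap B(0,\varepsilon))$ (which is of type $k$ by the choice of $F$), and using $d\exp_p|_0 = \id$ to identify $N_1(P) \cap \pi^{-1}(p)$ with $C^\circ \cap B(0,1) \subset T_pM$, we obtain as in the proof of Lemma~\ref{lemma:bfs}
\begin{equation*}
c_{\tau_p[\omega]}(C) = \vec{T}_pF \lrcorner \int_{C^\circ \cap B(0,1)} r_p(j_T^*\omega),
\end{equation*}
where $\vec{T}_pF = X_1 \wedge \cdots \wedge X_k$ for a positively oriented orthonormal basis of $L = T_pF$.

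To conclude, I would invoke Lemma~\ref{lemma:lifts_T}: the horizontal lifts $\tilde X_1,\ldots,\tilde X_k$ of $X_1,\ldots,X_k$ to a point $\xi \in \nu_T F_p$ agree whether computed in $\nu_T F$ or in $TM$. Hence, if $V_1,\ldots,V_{n-k}$ are vertical tangent vectors at $\xi$, the definition of $\tau_p\omega$ gives
\begin{equation*}
\vec{T}_pF \lrcorner r_p(j_T^*\omega)(V_1,\ldots,V_{n-k}) = \omega_\xi(V_1,\ldots,V_{n-k},\tilde X_1,\ldots,\tilde X_k) = \vec{T}_pF \lrcorner (\tau_p\omega)_\xi(V_1,\ldots,V_{n-k}).
\end{equation*}
Consequently
\begin{equation*}
c_{\tau_p[\omega]}(C) = \vec{T}_pF \lrcorner \int_{C^\circ \cap B(0,1)} \tau_p\omega,
\end{equation*}
and since $\tau_p\omega$ is translation-invariant on $TT_pM$ the same computation applied in the flat tangent space (now $\exp$ is the identity and all horizontal lifts are literal translations) identifies the right-hand side with $c_{[\tau_p\omega]}(C)$.

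The main obstacle is purely technical: one must verify that Lemmas~\ref{lemma:fiber_int} and~\ref{lemma:bfs} carry over from the sphere bundle to the normal disc current. Once this bookkeeping is done, the identity $\tau[\omega] = [\tau\omega]$ is simply the tautology that horizontal lifts into a hypersurface-type normal subbundle coincide with horizontal lifts into the ambient tangent bundle (Lemma~\ref{lemma:lifts_T}), which is exactly the content of the definition of $\tau_p$.
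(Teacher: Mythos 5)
Your proof is correct and follows essentially the same route as the paper: reduce to the cone functional $c_\Psi$ via \eqref{eq:c Psi}, establish a normal-disc version of Lemma~\ref{lemma:fiber_int} (the paper states this as Lemma~\ref{lemma:fiber_int_disc}), rerun the computation of Lemma~\ref{lemma:bfs} with $N_1$ in place of $N$, and then invoke Lemma~\ref{lemma:lifts_T} to identify the restricted form with $\tau_p\omega$. Your final paragraph, spelling out why the resulting expression equals $c_{[\tau_p\omega]}(C)$ in the flat tangent space, is a useful explicit version of a step the paper leaves implicit.
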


For the proof of the  Lemma we will need the following version of Lemma~\ref{lemma:fiber_int} with the normal cycle of $P$ 
replaced by the normal disc current of $P$.  
\begin{lemma}\label{lemma:fiber_int_disc} Let $M$ be a Riemannian manifold,  $P\subset M$ a compact submanifold with corners, and let
$F\subset M$ be an oriented $k$-dimensional embedded submanifold consisting only of points of $P$ of type $k$.
Let $j\colon T F^\perp\to TM$ denote the inclusion of the  normal  bundle of $F$ into the tangent bundle of $M$.
If   $\omega\in \Omega^{n}(TM)$, then 
$$ \int_{N_1(P)\cap\pi^{-1}(p)} r_p(j^*\omega)$$
depends smoothly on $p\in F$ and 
 $$\int_{N_1(P)\cap \pi^{-1}(F)} \omega = \int_F  \int_{N_1(P)\cap\pi^{-1}(p)} r_p(j^*\omega).$$
Here $r_p(j^*\omega)$ denotes the 
restriction of $j^*\omega$ to the fibers of $T F^\perp$. 
\end{lemma}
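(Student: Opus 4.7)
The plan is to observe that the hypothesis ``$F$ consists entirely of points of $P$ of type $k$'' reduces the statement to a standard fiber integration over a smooth disc bundle, bypassing the corner-straightening argument used in Lemma~\ref{lemma:fiber_int}.

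At every point $p \in F$, the tangent cone $T_pP$ coincides with the linear subspace $T_pF$, since $p$ has type $k$. Consequently the polar cone is the annihilator of $T_pF$, which under the metric identification is $T_pF^\perp$, and the fiber $N(P) \cap S_pM$ of the normal cycle is the unit sphere of $T_pF^\perp$. The fiber of the normal disc current is therefore the closed unit disc in $T_pF^\perp$, and letting $p$ range over $F$ yields
\[
N_1(P) \cap \pi^{-1}(F) = \overline{D}F,
\]
the closed unit disc bundle of the normal bundle $TF^\perp \to F$. This is a smooth compact manifold with boundary of dimension $n$, fibering smoothly over $F$ with $(n-k)$-dimensional fibers.

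Once this identification (with orientations) is in place, the asserted equality becomes the standard Fubini identity for fiber integration on the smooth fiber bundle $\overline{D}F \to F$ applied to the $n$-form $j^*\omega$: the integral over the total space equals the iterated integral of the fiberwise restriction $r_p(j^*\omega)$ over $F$, and the inner integral depends smoothly on $p$ for the usual reasons. The only delicate point is to verify that the canonical orientation of $N_1(P)$ inherited from the scaling $m_*((0,1]\times N(P))$ together with the zero-section contribution $[P]\times_\pi[0]$ agrees over $F$ with the orientation of $\overline{D}F$ induced from the splitting $TM|_F = TF \oplus TF^\perp$; this is a direct sign calculation and presents no serious obstacle. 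If one prefers, the local chart $\varphi$ and diffeomorphism $\widetilde\varphi\colon\pi^{-1}(U)\to T\RR^n$ from the proof of Lemma~\ref{lemma:fiber_int} may be invoked at the end to reduce the Fubini step to the Euclidean model, exactly as in that proof, using \eqref{eq:restriction_bundle_map}.
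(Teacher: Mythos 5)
The central claim of your proposal is false in exactly the case the lemma is used for. You assert that at every $p\in F$ the tangent cone $T_pP$ coincides with the linear subspace $T_pF$, deducing that $N_1(P)\cap\pi^{-1}(F)$ is the closed unit disc bundle $\overline{D}F$ of $TF^\perp$. But if $P$ has dimension $d$, a point $p$ of type $k$ is one for which in suitable local coordinates $\varphi(P\cap U)=[0,\infty)^{d-k}\times \RR^k\times 0_{\RR^{n-d}}$; the tangent cone is then this set itself, a $d$-dimensional closed convex cone with lineality space $T_pF$. Its polar is $(-\infty,0]^{d-k}\times 0_{\RR^k}\times \RR^{n-d}$, a proper subcone of $(T_pF)^\perp$ whenever $d>k$, so the fiber $N_1(P)\cap\pi^{-1}(p)$ is the intersection of that cone with the unit ball, not the full disc $D(T_pF^\perp)$. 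Your identification only holds when $d=k$, i.e.\ when $p$ is an interior point of $P$. In the paper's applications (Lemma~\ref{lemma:tau} and Theorem~\ref{thm:pullback}) one takes $P=\exp(C\cap B(0,\varepsilon))$ and $F=\exp(L\cap B(0,\varepsilon))$ with $L$ the lineality space of $C$; here $F$ is a proper lower-dimensional corner stratum of $P$, so $d>k$ and the argument collapses.

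The corner-straightening in Lemma~\ref{lemma:fiber_int} is not an avoidable technicality: it is precisely the device that handles the non-smooth, $p$-dependent cone fibers of $N_1(P)$ over $F$ by making them constant in the local chart. The paper's proof of this lemma does exactly that: it expresses $N_1(P)=m_*((0,1]\times N^*(P)) + [P]\times_\pi[0]$ via the metric and then runs the same chart-based computation as Lemma~\ref{lemma:fiber_int}, with the cosphere bundle replaced by the tangent bundle. Your closing remark that one ``may invoke $\varphi$ and $\widetilde\varphi$ at the end'' is therefore not an optional refinement but the actual content of the proof; the disc-bundle shortcut preceding it does not work.
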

\begin{proof} Observe that the Riemannian metric gives a map $m\colon (0,\infty) \times S^*M \to TM$ such that $N_1(P)= m_*((0,1]\times N^*(P)) + [P]\times_\pi [0]$. 
The rest of the proof is now parallel to the proof of Lemma~\ref{lemma:fiber_int}.
\end{proof}

\begin{proof}[Proof of Lemma~\ref{lemma:tau}] By \eqref{eq:c Psi} it suffices to show that $c_{\tau_p[\omega]}$ equals $c_{[\tau\omega]_p}$ for each $p\in M$. Put $\Psi = [\omega]$. 
By Lemma~\ref{lemma:bfs}, we have 
for all closed convex cones $C\subset T_pM$ with $pr_{T_pM/L}(C)$ simplicial and sufficiently small $\varepsilon>0$
$$c_{\tau_p[\omega]}( C)= \lim_{r\to 0} \frac{1}{\omega_k r^k}  \Psi(\exp(C\cap  B(0,\varepsilon)), \exp(B(0,r)\cap L))$$
 Here $L=L(C)$ is the lineality space of $C$ and $k=\dim L$.  
 Repeating the computation of Lemma~\ref{lemma:bfs} with the normal disc current  $N_1(\exp(C\cap B(0,\varepsilon)))$ in place of the normal cycle
 $N(\exp(C\cap B(0,\varepsilon)))$ and using Lemma~\ref{lemma:fiber_int_disc}, we obtain further that
$$c_{\tau_p[\omega]}( C) = \vec T_pF\lrcorner \int_{C^\circ\cap D^{n}}   r_p(j^*\omega)$$
As in the proof of Lemma~\ref{lemma:bfs}, but now using Lemma~\ref{lemma:lifts_T}, we see that 
$$ \vec{T}_pF\lrcorner r_p(j^*\omega)(V_1,\ldots, V_{n-k})= \vec{T}_pF \lrcorner \tau_p\omega(V_1,\ldots, V_{n-k})$$
if $V_1,\ldots, V_{n-k}$ are tangent to the fiber $\nu F_p$.
This  finishes the proof.
\end{proof}

Note that the transfer map $\tau\colon \calC(M)\to \Curv(TM)$ induces a natural grading on $\calC(M)$. 
Neither pullback nor Alesker product respect this grading, but both are compatible with the 
filtration 
$$\calC_0(M)\supset \cdots \supset \calC_n(M)$$
where
$$\calC_i(M)=\{\text{curvature measures of degree } k\geq i\},$$ 
see \cite{sw_spheres}*{Section 3}.
We denote by $\pi_k$ the projection to the degree $k$ component of $\calC(M)$.

Next we use the characterization of angular curvature measures established in Theorem~\ref{thm:ccc} to prove the following

\begin{lemma} \label{lemma:constant_coeff_form} Let $M$ be a Riemannian manifold of dimension $n$. 
 If $\Psi\in\calC(M)$ has degree different from $n-1$ and is angular at $p$, then there exists $\psi\in \Omega^n(TM)$ with 
  $\Psi= [\psi]$  such that $\tau_p\psi$ has constant coefficients.
\end{lemma}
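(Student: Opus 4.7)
My plan is to apply Theorem~\ref{thm:ccc} to produce a constant coefficient form realizing $\tau_p\Psi$ at $p$, then to modify any global form representative of $\Psi$ so that its transfer at $p$ picks up this constant coefficient value.

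First, any smooth curvature measure may be realized as $\Psi = [\eta]$ for some $\eta \in \Omega^n(TM)$: given a defining pair $(\omega,\phi) \in \Omega^{n-1}(SM) \oplus \Omega^n(M)$, extend $\omega$ to an arbitrary $\tilde\omega \in \Omega^{n-1}(TM)$; the Stokes identity of Section~\ref{subsec:ccc} yields $\Psi = [d\tilde\omega + \pi^*\phi]$, so we may take $\eta := d\tilde\omega + \pi^*\phi$. Because $\Psi$ is angular at $p$ and has degree $\ne n-1$, Theorem~\ref{thm:ccc} (together with the trivial degree-$n$ case, where the space of angular curvature measures is spanned by Lebesgue measure, which is manifestly constant coefficient) produces a constant coefficient $n$-form $\omega_0$ on $T_pM \oplus T_pM$ such that $\tau_p\Psi = [\omega_0]$. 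By Lemma~\ref{lemma:tau},
\[
[\tau_p\eta] = \tau_p[\eta] = \tau_p\Psi = [\omega_0],
\]
so $\gamma := \tau_p\eta - \omega_0$ lies in the kernel
\[
K_p := \ker\!\bigl([\cdot]\colon \Omega^n(TT_pM)^{tr} \to \Curv(T_pM)\bigr).
\]

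The heart of the argument is to exhibit $\beta \in \Omega^n(TM)$ with $\tau_p\beta = \gamma$ and $[\beta] = 0 \in \calC(M)$; then $\psi := \eta - \beta$ represents $\Psi$ and satisfies $\tau_p\psi = \omega_0$. To construct $\beta$, note that the fiberwise kernels $K_q$ are defined uniformly from the Euclidean structure on $T_qM$ and are therefore $O(n)$-equivariant subspaces of $\Omega^n(TT_qM)^{tr}$; they assemble into a smooth vector subbundle $K \subset \Omega^n(TTM)^{tr}$. Trivializing $K$ in a local orthonormal frame, I extend $\gamma$ to a smooth section of $K$ over a neighborhood of $p$ and then multiply by a bump function to obtain a global section $\tilde\gamma \in \Gamma(\Omega^n(TTM)^{tr})$ with values in $K$ and satisfying $\tilde\gamma_p = \gamma$. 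Since the form-level transfer $\tau\colon \Omega^n(TM) \to \Gamma(\Omega^n(TTM)^{tr})$ is an isomorphism, I may set $\beta := \tau^{-1}\tilde\gamma$. By Lemma~\ref{lemma:tau}, $\tau[\beta] = [\tau\beta] = [\tilde\gamma]$, and this vanishes in $\Gamma(\Curv(TM))$ because $\tilde\gamma_q \in K_q$ for every $q$; the injectivity of $\tau$ on $\calC(M)$ then forces $[\beta] = 0$, while by construction $\tau_p\beta = \tilde\gamma_p = \gamma$.

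The main obstacle is the local-to-global step: realizing an arbitrary element of $K_p$ as the value at $p$ of a section taking values in $K$. This reduces to checking that the kernels $K_q$ form a smooth vector subbundle, which follows from the fact that $[\cdot]$ is constructed identically (via normal disc currents) on every tangent space, so in any local orthonormal frame $K$ becomes a constant linear subspace.
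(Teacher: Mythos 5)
Your proof is correct and takes essentially the same approach as the paper: both construct a constant coefficient form $\omega_0$ (resp.\ $\theta$) representing $\tau_p\Psi$ via Theorem~\ref{thm:ccc}, observe that the difference with $\tau_p\eta$ lies in the kernel of $[\cdot]$, exploit $O(n)$-equivariance of that kernel to extend this difference to a smooth, compactly supported kernel-valued section (you via a local orthonormal-frame trivialization, the paper equivalently via geodesic parallel transport $P_q$ and the section $\eta_q=(P_q\oplus P_q)^*(\theta-\tau_p\omega)$), and then correct $\eta$ by $\tau^{-1}$ of that section using Lemma~\ref{lemma:tau}.
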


\begin{proof}As in the translation-invariant case one sees that there exists $\omega\in \Omega^n(TM)$ with $\Psi = [\omega]$. 
Since $\tau_p \Psi$ is angular, there exists  by Theorem~\ref{thm:ccc} a constant coefficient form $\theta$ on $TT_pM$ representing $\tau_p\Psi$. 
 Choose a geodesic coordinate neighborhood $U$ around $p$ to obtain for each $q\in U$ via parallel transport along  the unique geodesic connecting $p$ with $q$
 a linear isometry $P_q\colon T_qM \to T_pM$. Now $\eta_q = (P_q\oplus P_q)^*(\theta-\tau_p\omega)$ defines a smooth local section of $\Omega^n(TTM)^{tr}$; 
 modify $\eta$ by a smooth cut-off function to get a smooth global section. By construction, $\eta$ induces the zero curvature measure on each tangent space to $M$. 
 Put $\psi= \omega+ \tau^{-1}(\eta)$. Then $\tau_p\psi$ has constant coefficients and 
 $[\psi]=\tau^{-1}[\tau\omega+\eta] ={ \tau^{-1}[\tau\omega] } = \Psi$ by  Lemma~\ref{lemma:tau}. 
\end{proof}

\begin{proof}[Proof of Theorem~\ref{thm:pullback}]
By Lemma~\ref{lemma:char_angularity} it suffices to show that there exists a number 
$g(L)$   such that
for every closed convex cone $C$ in $T_pM$  with lineality space $L=L(C)$
 $$c_{\tau_p(f^*\Psi)}(C)=g(L) \gamma(C).$$  
Let $C\subset T_pM$  be some fixed closed convex cone.  
By continuity and finite additivity (Lemma~\ref{lemma:triangulation} and \eqref{eq:inclusion_exclusion}),
we may assume  without loss of generality  that $pr_{T_pM/L}(C)$ is a simplicial cone. We may also assume that $f$ is  the inclusion map $M\hookrightarrow \overline M$.  
By Lemma~\ref{lemma:constant_coeff_form} we may further assume that 
  $$\Psi(P,U)= \int_{N_1(P)\cap \pi^{-1}(U)} \omega$$
  with $\omega\in\Omega^n(T\overline M)$ where  $\tau_p\omega$ has constant coefficients.  
  
  Put $F= \exp^M(L\cap B(0,\varepsilon))$,  where $\exp^M\colon T_p M\to M$ denotes the exponential map, 
  and let $j\colon T F^\perp\to T\overline M$ denote the inclusion of the normal bundle of $F\subset \overline M$
  into the tangent bundle of $\overline M$. Denote by  $r_q(j^*\omega)$  the restriction of $j^*\omega$ to the fibers of $T F^\perp$.  
 By Lemma~\ref{lemma:bfs}, Equation \eqref{eq:pullback}, and Lemma~\ref{lemma:fiber_int_disc},  we obtain 
 \begin{align*}
  c_{\tau_p(\iota^* \Psi)} (C) 
  & = \lim_{r\to 0} \frac{1}{\omega_kr^k}    \Psi(\exp^M(C\cap  B(0,\varepsilon)), \exp^M(L \cap B(0,r)))\\
 & = \lim_{r\to 0} \frac{1}{\omega_kr^k}  
 \int_{  F\cap B(0,r)}   \int_{ N_1(\exp^M(C\cap  B(0,\varepsilon)))\cap \pi^{-1}(q)} r_q(j^*\omega)   \\
 & =  \vec T_pF\lrcorner  \int_{ N_1(\exp^M(C\cap  B(0,\varepsilon)))\cap \pi^{-1}(p)} r_p(j^*\omega)  \\
& = \vec T_pF\lrcorner  \int_{ C^\circ \cap D^n } r_p(j^*\omega) 
 \end{align*}  
 where $\vec{T}_pF = X_1\wedge \cdots \wedge X_k$ for some suitably oriented orthonormal basis of $T_p F$.

 Let $\nabla, \overline\nabla $ denote the Levi-Civita connections of $M\subset \overline M$. Using parallel transportation with respect to the normal 
 connection of $F\subset \overline M$, 
we extend the tangent vectors $X_1,\ldots, X_k$ to a  local orthonormal frame of $X_1,\ldots,X_n$ around $p$ in $\overline M$ with
$$X_1,\ldots, X_k\in T_q F\quad \text{and} \quad X_{k+1},\ldots, X_n\in T_q F^\perp$$
along $F$ and such that 
 \begin{equation}\label{eq:parallel}pr_{T_pF^\perp}(\overline\nabla_{X_i}X_j)=\nabla^\perp_{X_i}X_j= 0, \quad i=1,\ldots,k,\quad  j=k+1,\ldots, n\end{equation}
at $p$. Here $pr_{T_pF^\perp}$ denotes the orthogonal projection onto $T_pF^\perp$.

 Fix $\xi\in T F^\perp$ and extend $\xi$ to local section of $TF^\perp$ by declaring 
 $\xi = \sum_{i=k+1}^n \xi^i X_i$ with constant coefficients $\xi^{k+1},\ldots, \xi^n$
 Let $c_i\colon \RR\to F $ be a smooth curve with $c_i(0)=p, \dot c_i(0)=X_i$, $i=1,\ldots,k$. Then 
 $$\widetilde X_i := \dt \xi \circ c_i\in T_\xi TF^\perp, \qquad i=1,\ldots, k,$$
 is a lift of $X_i\in T_p F$ to the normal bundle. 
 Now
\begin{equation}\label{eq:decomp_lift}T_\xi TF^\perp\subset T_\xi T\overline M= H_\xi \oplus V_\xi\cong T_p \overline M \oplus T_p \overline M\end{equation}
 and thus each lift $\widetilde X_i$ decomposes as $X_i + X_i^V$ into horizontal and vertical components.   
 
 Let the frame $X_1,\ldots, X_n$ together with some  coordinate system $(x^1, \ldots , x^n)$  around $p\in \overline M$ define coordinates $x^1,\ldots,x^n, y^1, \ldots y^n$ on $T\overline M$.
 In terms of these  coordinates, the horizontal subspace $H_\xi$ is spanned by
 $$ \pder{}{x^i} + \sum_{j=1}^n\langle \xi, \overline{\nabla}_{\pder{}{x^i}} X^j\rangle \pder{}{y^j}, \qquad i=1,\ldots,n.$$
 Using \eqref{eq:parallel}, the vertical component of the lift $\widetilde  X_i$ is thus given by
 $$ X_i^V= -\sum_{j=1}^k \langle \xi , \overline \nabla_{X_i} X_j\rangle \frac{\partial}{\partial y^j}.$$
If $\xi\in T_pF^\perp \cap T_pM$, then by the Gauss formula \eqref{eq:gauss}
$$X_i^V = -\sum_{j=1}^k \langle \xi , \overline \nabla_{X_i} X_j\rangle \frac{\partial}{\partial y^j}
=-\sum_{j=1}^k \langle \xi , \nabla_{X_i} X_j\rangle \frac{\partial}{\partial y^j} = 0,$$
where the last equality holds since  the second fundamental form of $F\subset M$ vanishes at $p$ by the discussion surrounding \eqref{eq:acceleration}.
If $\xi\in T_pM^\perp $, then 
$$X_i^V = -\sum_{j=1}^k \langle \xi , \overline\nabla_{X_i} X_j\rangle \frac{\partial}{\partial y^j} = \sum_{j=1}^k \langle \xi , h(X_i, X_j)\rangle \frac{\partial}{\partial y^j}$$
where $h\in \Sym^2(T_p^*F)\otimes T_pF^\perp$ denotes the second 
fundamental form of $F\subset \overline M$ at $p$.
We conclude that 
 \begin{equation}\label{eq:vertical_comp} X_i^V=  \sum_{j=1}^k \langle pr_{T_pM^\perp}(\xi) , h(X_i, X_j)\rangle \frac{\partial}{\partial y^j}\end{equation}
 for all $ \xi\in T_pF^\perp$.

By  what was already said, the proof will be complete if we can show that there exists some constant $g(L)$ such that 
$$\vec T_pF\lrcorner  \int_{ C^\circ \cap D^n } r_p(j^*\omega) = g(L)\gamma(C)$$
for every closed convex cone $C\subset T_pM$ with lineality space equal to  $L$.
Now by the definition of $\tau_p\omega$ and \eqref{eq:decomp_lift} we have 
\begin{align*}\vec T_pF\lrcorner r_p(j^*\omega)(  \pder{}{y^{k+1}},\ldots,  \pder{y^{n}} )
  & = \omega_\xi( \pder{}{y^{k+1}},\ldots,  \pder{y^{n}} , \widetilde X_1, \ldots, \widetilde X_k) \\
& = \tau_p\omega( \pder{}{y^{k+1}},\ldots,  \pder{y^{n}}  , X_1+X_1^V, \ldots, X_k+X_k^V)
\end{align*}
 Since $\tau_p\omega$ has constant coefficients, the last expression is by \eqref{eq:vertical_comp} evidently a polynomial function $f(\xi)$ depending only on $pr_{T_pM^\perp}(\xi) $. Using that $C^\circ = C' \oplus T_pM^\perp$, 
where $C'$ is the polar cone of $C$ relative to $T_pM$, 
we compute 
\begin{align*}\vec T_pF\lrcorner  \int_{ C^\circ \cap D^n } r_p(j^*\omega) & = \int_{(C' \oplus T_pM^\perp) \cap D^n} f(\xi)\; d\xi\\ 
 &  = \int_{\{\eta\in C'\colon |\eta|\leq 1 \}} \int_{\{\zeta\in T_pM^\perp\colon |\zeta|^2\leq1 - |\eta|^2\}} f(\zeta)\; d\zeta\; d\eta\\
 & = \left(\int_0^1 \int_{\{\zeta\in T_pM^\perp\colon |\zeta|^2\leq1 -r^2\}} f(\zeta) \; d\zeta\; r^{m-k-1} dr\right)\\
 & \hspace{5cm}\times \vol_{k-1}(\{\eta\in C'\colon |\eta|= 1 \})\\
 & = g(L)\gamma(C).
\end{align*}
\end{proof}

\section{Proof of Theorem~\ref{conj:angularity}} \label{sec:proof conj}

With Theorem~\ref{thm:pullback} and Theorem~\ref{thm:ccc} at our disposal we are now ready to give a proof of the angularity conjecture. 
Our starting point is Lemma~\ref{lemma:angularity} below which is a slight generalization of \cite{bfs}*{Theorem~2.8}. First we need a simple 
lemma on cones.

\begin{lemma}\label{lemma:intersection_cone}
 Let $C\subset \RR^n$ be  a closed convex cone with lineality space $L$. If $v\in S^{n-1}$ is not orthogonal to $L$, then for every $s\in \RR$
\begin{equation}\label{eq:intersection_cone_hyperplane}C\cap (v^\perp + sv)= (C\cap v^\perp) + s u,\end{equation}
where $u = pr_{L}(v)/|pr_L(v)|^2$.  
Moreover, the lineality space of $C\cap v^\perp$ is $L\cap v^\perp$ and if  $pr_{\RR^n/L}(C)$ is  simplicial then also
 $pr_{\RR^n/(L\cap v^\perp)}(C\cap v^\perp)$  is simplicial. 
\end{lemma}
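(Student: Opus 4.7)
The plan is to establish the three assertions---the affine-section identity, the lineality space, and the simpliciality---in turn.

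For the identity $C \cap (v^\perp + sv) = (C \cap v^\perp) + su$, the key facts are that $u$ is a positive scalar multiple of $pr_L(v)$, hence $u \in L \subset C$, and that
$$\langle u, v\rangle = \frac{\langle pr_L(v), v\rangle}{|pr_L(v)|^2} = 1,$$
so $su$ lies on the affine hyperplane $v^\perp + sv$. Since $\pm u \in L \subset C$ and the lineality space satisfies $C + L = C$, both inclusions follow at once: if $x \in C \cap v^\perp$ then $x + su \in C$ with $\langle x + su, v\rangle = s$; conversely, any $y \in C$ with $\langle y, v\rangle = s$ decomposes as $y = (y - su) + su$ with $y - su \in C \cap v^\perp$.

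For the lineality space assertion, $C \cap v^\perp$ is a closed convex cone, so its lineality space equals $(C \cap v^\perp) \cap (-(C \cap v^\perp)) = (C \cap (-C)) \cap v^\perp = L \cap v^\perp$.

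For the simpliciality claim, set $v_L = pr_L(v)$, $v_\perp = pr_{L^\perp}(v)$, and $L' = L \cap v^\perp$, so that $\RR^n = L' \oplus \RR v_L \oplus L^\perp$ is an orthogonal direct sum. Since $L$ is the lineality space of $C$, one has $C = L + \tilde C$ with $\tilde C := C \cap L^\perp$. Under the natural isomorphism $L^\perp \cong \RR^n/L$ induced by the projection, the cone $\tilde C$ corresponds to $pr_{\RR^n/L}(C)$, so by hypothesis $\tilde C$ is a pointed simplicial cone of full dimension $n-k$ in $L^\perp$. A direct computation using the decomposition $C = L + \tilde C$ yields
$$C \cap v^\perp = L' + \{\, y + \beta(y)\,v_L : y \in \tilde C\,\}, \qquad \beta(y) = -\frac{\langle y, v_\perp\rangle}{|v_L|^2},$$
and therefore $pr_{\RR^n/L'}(C \cap v^\perp)$ is identified with the image of $\tilde C$ under the injective linear shear $\varphi : L^\perp \to \RR v_L \oplus L^\perp \cong \RR^n/L'$, $y \mapsto y + \beta(y)v_L$. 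Since $\varphi$ maps the $n-k$ extreme rays of $\tilde C$ bijectively to $n-k$ extreme rays of a pointed cone of the same dimension $n-k$, the image is simplicial in the sense of the paper's definition.

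The main obstacle is the bookkeeping in the third part: one must correctly identify the quotient $\RR^n/L'$ with the orthogonal complement $\RR v_L \oplus L^\perp$ and verify that the shear $\varphi$ does not create or destroy extreme rays. The first two claims are immediate from the definitions.
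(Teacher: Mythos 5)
Your proof is correct. The first two parts coincide with the paper's (which simply asserts the inclusions are straightforward and the lineality space is obvious); your computations filling them in are sound, in particular the observation that $\langle u,v\rangle=1$.

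For the simpliciality, you take a genuinely different route. The paper establishes the intermediate claim $pr_{\RR^n/L}(C)=pr_{\RR^n/L}(C\cap v^\perp)$ directly from the identity \eqref{eq:intersection_cone_hyperplane}: given $x\in C$, setting $s=\langle x,v\rangle$ and writing $x=(x-su)+su$ shows $x$ and $x-su\in C\cap v^\perp$ have the same image mod $L$. From that equality it picks generators $v_1,\ldots,v_m\in C\cap v^\perp$ of the simplicial cone and checks they still generate modulo $L'=L\cap v^\perp$. You instead split off $\tilde C=C\cap L^\perp$, compute $C\cap v^\perp$ explicitly via the orthogonal decomposition $\RR^n=L'\oplus\RR v_L\oplus L^\perp$, and exhibit $pr_{\RR^n/L'}(C\cap v^\perp)$ as the image of $\tilde C$ under an injective linear shear, from which simpliciality is immediate since linear isomorphisms preserve extreme rays, dimension, and pointedness (and $\tilde C$ is pointed because $\tilde C\cap(-\tilde C)=L\cap L^\perp=\{0\}$). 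Both arguments are about the same length; the paper's is slightly leaner because it reuses \eqref{eq:intersection_cone_hyperplane} directly, while yours is more explicit and makes the geometric mechanism (a shear moving $\tilde C$ into $v^\perp$) visible. Your bookkeeping checks out: $\varphi(\tilde C)$ is an $(n-k)$-dimensional pointed cone with $n-k$ extreme rays sitting inside the $(n-k+1)$-dimensional space $(L')^\perp$, which is exactly what simpliciality in the paper's sense requires.
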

\begin{proof} It is straightforward to verify the inclusions $ C\cap (v^\perp + sv)\supset  (C\cap v^\perp) + s u$ and $C\cap (v^\perp + sv)-su \subset  (C\cap v^\perp)$; this implies 
\eqref{eq:intersection_cone_hyperplane}. It is obvious that the lineality space of $C\cap v^\perp$ is $L\cap v^\perp$.

Next we claim that $pr_{\RR^n/L}(C)= 
pr_{\RR^n/L}(C\cap v^\perp)$. To see this, given $x\in C$ put $s=\langle x,v\rangle$. But then $x\in (C\cap v^\perp)+ su$ by $\eqref{eq:intersection_cone_hyperplane}$ and so $pr_{\RR^n/L}(C)\subset 
pr_{\RR^n/L}(C\cap v^\perp)$. Since the reverse inclusion trivially holds, we have reached the desired conclusion. As a consequence, if  $pr_{\RR^n/L}(C)$ is  simplicial,
then there exist elements  $v_1,\ldots, v_m\in C\cap v^\perp$  that a linearly independent modulo $L$ such that every $x\in C\cap v^\perp$ can be expressed as 
$x = \sum_{i=1}^m \lambda_i v_i + l$
with $l\in L$ and  nonnegative numbers $\lambda_1,\ldots,\lambda_m$. It follows
that $l\in L\cap v^\perp$ and hence $pr_{\RR^n/(L\cap v^\perp)}(C\cap v^\perp)$  is simplicial. 
\end{proof}

\begin{lemma} \label{lemma:angularity}
  Let $\Psi\in \calC(\RR^n)$ be a smooth curvature measure. If $\Psi$ is angular at $p$, then $V_1\cdot \Psi$ is angular at $p$ as well.
\end{lemma}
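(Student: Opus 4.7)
The plan is to compute $c_{\tau_p(V_1\cdot\Psi)}(C)$ directly via the Crofton formula \eqref{eq:muk action} and verify that it takes the form $g(L)\gamma(C)$ demanded by Lemma~\ref{lemma:char_angularity}, where $L = L(C)$ and $k = \dim L$. By Lemma~\ref{lemma:triangulation} combined with the additivity \eqref{eq:nc add} of normal cycles (hence of the $c$-function) and the finite additivity \eqref{eq:inclusion_exclusion} of external angles, I would reduce to cones $C$ for which $pr_{T_p\RR^n/L}(C)$ is simplicial. Since $\exp_p$ is translation by $p$ in $\RR^n$, Lemma~\ref{lemma:bfs} and \eqref{eq:muk action} then yield
$$c_{\tau_p(V_1\cdot\Psi)}(C) = \lim_{r\to 0} \frac{1}{\omega_k r^k} \int_{\overline{\Grass}_{n-1}} \Psi\bigl((p + C \cap B(0,\varepsilon))\cap \overline E,\ p + L \cap B(0,r)\bigr)\, d\overline E.$$

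Next I would parametrize hyperplanes as $\overline E_{v,s} = \{x : \langle x, v\rangle = s\}$ and slice the integrand using Lemma~\ref{lemma:intersection_cone}. The set $\{v \perp L\}$ has $dv$-measure zero and contributes nothing. For $v \not\perp L$, Lemma~\ref{lemma:intersection_cone} identifies $(p + C)\cap \overline E_{v,s}$ with a translate $q_v(s) + (C\cap v^\perp)$ of the slice cone, whose lineality space $L \cap v^\perp$ has dimension $k-1$. Repeating the limit computation of Lemma~\ref{lemma:bfs} inside the Crofton integral, with the $s$-integration picking up a factor $|pr_L(v)|$ from the volume of the $(k-1)$-disc carved by the slice out of $p + L \cap B(0,r)$, and using smoothness of the section $\tau\Psi$ to replace $c_{\tau_{q_v(s)}\Psi}$ by $c_{\tau_p\Psi}$ in the limit, I would arrive at an expression of the form
$$c_{\tau_p(V_1\cdot\Psi)}(C) = c_n\int_{v\in S^{n-1}/\pm} c_{\tau_p\Psi}(C\cap v^\perp)\,|pr_L(v)|\, dv$$
for some universal constant $c_n$.

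Finally, angularity of $\tau_p\Psi$ via Lemma~\ref{lemma:char_angularity} rewrites $c_{\tau_p\Psi}(C\cap v^\perp) = f(L\cap v^\perp)\gamma(C\cap v^\perp)$, and the proof reduces to a Crofton-type identity for external angles,
$$\int_{v\in S^{n-1}/\pm} f(L\cap v^\perp)\gamma(C\cap v^\perp)\,|pr_L(v)|\, dv = g(L)\gamma(C),$$
for some function $g$ on Grassmannians depending only on $L$ and $f$. Fibering the $v$-integration over $\Grass_{k-1}(L)$ via $v \mapsto L\cap v^\perp$ makes the factor $f(L\cap v^\perp)$ constant along fibers, so the residual inner integral should produce $\gamma(C)$ times a weight depending only on the fiber's position in $L$. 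This spherical Crofton identity for external angles is the main obstacle and the geometric heart of the argument; once established, Lemma~\ref{lemma:char_angularity} immediately yields angularity of $V_1\cdot\Psi$ at $p$.
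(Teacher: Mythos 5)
Your approach matches the paper's almost step for step: the reduction to simplicial cones via Lemma~\ref{lemma:triangulation} and \eqref{eq:inclusion_exclusion}, the invocation of Lemma~\ref{lemma:bfs} and \eqref{eq:muk action}, the hyperplane slicing via Lemma~\ref{lemma:intersection_cone}, and the appearance of the weight $|pr_L(v)|$ (which the paper writes as $\cos\theta$, $\theta$ being the angle between $v$ and $L$) all agree, and you arrive at the same integral
\[
c_{\tau_p(V_1\cdot\Psi)}(C)=\text{const}\cdot\int_{S^{n-1}} |pr_L(v)|\,f(L\cap v^\perp)\,\gamma(C\cap v^\perp)\,dv.
\]
The one thing you leave unresolved is the ``spherical Crofton identity for external angles'' that this integral equals $g(L)\gamma(C)$; that is precisely the point at which the paper stops computing and invokes the calculation already carried out in the proof of \cite{bfs}*{Theorem 2.8}, which establishes this identity in the translation-invariant setting. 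So your route is correct and essentially identical to the paper's; to close the argument you should either reproduce or cite that final computation rather than leaving it as an open obstacle.
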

\begin{proof}
By Lemma~\ref{lemma:char_angularity} the assumption implies that there exists a constant $f(L)$ such that 
$$c_{\tau_p\Psi}(C)=f(L)\gamma(C)$$
for every closed convex cone $C\subset \RR^n$ with $L=L(C)$.
We have to show that there exists a constant 
$h(L)$ such that   
 $$c_{\tau_p(V_1\cdot\Psi)}(C)= h(L)\gamma(C)$$
for every closed convex cone $C\subset \RR^n$ with  $L=L(C)$. By continuity and finite additivity (Lemma~\ref{lemma:triangulation} and \eqref{eq:inclusion_exclusion}),
we may assume  without loss of generality  that $pr_{\RR^n/L}(C)$ is a simplicial cone and that $p=0$. 
 
By Lemma~\ref{lemma:bfs} and \eqref{eq:muk action}
 \begin{align}
 c_{\tau_p(V_1\cdot\Psi)}(C) \notag
  & =\lim_{r\to 0} \frac{1}{\omega_{k} r^{k}} \int_{\overline{\Grass}_{n-1}} \Psi(C \cap H, B(0,r)\cap L\cap H) \; dH\\
\label{eq:limit}  & = \frac{1}{2\omega_{n-1}} \int_{S^{n-1}}  \lim_{r\to 0}\frac{1}{\omega_k r^k}\int_\RR   \Psi(C \cap H_s, B(0,r)\cap L\cap H_s)\; ds\;  dv
\end{align}
where $H_s =sv+ v^\perp$. Assume now that $v$ is not orthogonal to $L$. 
By Lemma~\ref{lemma:intersection_cone} the cone $C\cap H_s$ is a submanifold with corners and so we may apply
Lemma~\ref{lemma:fiber_int} and \eqref{eq:intersection_cone_hyperplane} to obtain that  the inner integral in \eqref{eq:limit} equals 
$$ \int_\RR \int_{L\cap B(0,r)\cap H_s}  \left( L\cap v^\perp\lrcorner \int_{N(C\cap v^\perp)\cap \pi^{-1}(y-su)} r_{y-su}(j^*\beta_{su}^*\omega)\right)\; dy\;ds,$$ 
where $r_x$ denotes the restriction to the fibers of the unit normal bundle of $L\cap v^\perp\subset \RR^n$, $u= pr_{L}(v)/|pr_L(v)|^2$,
and  $\beta_{su}\colon \RR^n\to\RR^n$ denotes translation by $su$. 
Since
$$ \int_\RR \int_{L\cap B(0,r)\cap H_s}  = \int_{L\cap B(0,r)} \cos\theta,$$
with $\theta$ denoting the angle between $v$ and $L$, passing to the limit yields
\begin{align*} c_{\tau_p(V_1\cdot\Psi)}(C) & = \frac{1}{2\omega_{n-1}} \int_{S^{n-1}}\cos\theta\left( L\cap v^\perp\lrcorner \int_{N(C\cap v^\perp)\cap \pi^{-1}(0)}r_{0}(j^*\omega) \right) \;  dv\\ 
& = \frac{1}{2\omega_{n-1}}\int_{S^{n-1}}  \cos\theta \;c_{\tau_p\Psi}(C\cap v^\perp) \;  dv\\
  & =  \frac{1}{2\omega_{n-1}}\int_{S^{n-1}}  \cos\theta \;f(L\cap v^\perp) \;\gamma( C\cap v^\perp) \;  dv
 \end{align*}
Now the same computation   as in the proof of \cite{bfs}*{Theorem 2.8} shows  that there exists a constant $h(L)$ so that the latter integral equals 
$h(L)\gamma(C)$ for every closed convex cone $C$ with lineality space $L$. This completes the proof.
\end{proof}

\begin{lemma} \label{lemma:constant_coeff_form_global} Let $M$ be a Riemannian manifold of dimension $n$. 
 If $\Psi$ is an angular curvature measure on $M$ of degree different from $n-1$, then there exists $\psi\in \Omega^n(TM)$ with 
  $\Psi= [\psi]$  such that $\tau_p\psi$ has constant coefficients for all $p\in M$.
\end{lemma}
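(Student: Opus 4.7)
The plan is to upgrade the pointwise statement of Lemma~\ref{lemma:constant_coeff_form} to one holding simultaneously at every $p\in M$. The key observation enabling the globalization is that the assignment $\sigma\mapsto [\sigma]$ from constant coefficient forms to curvature measures is linear, so local representatives can be glued together by a partition of unity.

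Concretely, I would introduce the vector bundle $\mathcal{B}\to M$ whose fiber at $p$ is $\largewedge^n(T_pM^*\oplus T_pM^*)$, i.e.\ the space of constant coefficient $n$-forms on $TT_pM$, and the sub-bundle $\mathcal{K}\subset \mathcal{B}$ whose fiber at $p$ consists of those forms whose fiberwise associated curvature measure on $T_pM$ vanishes. Applying Lemma~\ref{lemma:dim ccc} fiberwise shows that the rank of $\mathcal{K}$ is constant in each bidegree (this is precisely where the hypothesis that the degree of $\Psi$ differs from $n-1$ enters, as in the local lemma). Hence $\mathcal{K}$ is a smooth sub-bundle and $\mathcal{Q}=\mathcal{B}/\mathcal{K}$ is a smooth vector bundle. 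By Theorem~\ref{thm:ccc} the natural map identifies $\mathcal{Q}$ with the sub-bundle of translation-invariant angular curvature measures in $\Curv(TM)$, and angularity of $\Psi$ just says that $\tau\Psi$ is a smooth section of this sub-bundle.

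I would then lift $\tau\Psi$ to a smooth section $\sigma$ of $\mathcal{B}$ by a standard partition of unity argument. Over a sufficiently small coordinate neighborhood $U_\alpha$ the short exact sequence
\[
0\longrightarrow\mathcal{K}|_{U_\alpha}\longrightarrow \mathcal{B}|_{U_\alpha}\longrightarrow \mathcal{Q}|_{U_\alpha}\longrightarrow 0
\]
of smooth bundles admits a smooth splitting, producing a local lift $\sigma_\alpha$ with $[\sigma_\alpha]=\tau\Psi$ on $U_\alpha$. Given a locally finite cover $\{U_\alpha\}$ and subordinate partition of unity $\{\rho_\alpha\}$, set $\sigma=\sum_\alpha \rho_\alpha\sigma_\alpha$; linearity of $[\,\cdot\,]$ together with $\sum_\alpha\rho_\alpha\equiv 1$ yields $[\sigma_p]=\tau_p\Psi$ at every $p\in M$.

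The proof concludes by setting $\psi=\tau^{-1}(\sigma)\in\Omega^n(TM)$. Then $\tau_p\psi=\sigma_p$ has constant coefficients at every point, and Lemma~\ref{lemma:tau} gives $\tau[\psi]=[\tau\psi]=[\sigma]=\tau\Psi$, hence $[\psi]=\Psi$. The principal point requiring care is the verification that $\mathcal{K}$ is a smooth sub-bundle of constant rank so that smooth local splittings exist; this reduces to the algebraic content of Lemma~\ref{lemma:dim ccc}, which asserts that the dimension of the kernel depends only on the degree, not the point. Once the degree hypothesis is in force, the argument is essentially formal.
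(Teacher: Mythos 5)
Your proof is correct and follows essentially the same strategy as the paper's: choose smooth local constant-coefficient lifts of $\tau\Psi$ using the classification from Theorem~\ref{thm:ccc}, glue with a partition of unity (exploiting linearity of $[\,\cdot\,]$), and pull back through $\tau^{-1}$ via Lemma~\ref{lemma:tau}. The paper works directly in a local orthonormal frame trivialization rather than making the kernel sub-bundle $\mathcal{K}$ explicit, but the substance is identical; your elaboration of why the local lifts can be chosen smoothly (constant rank of $\mathcal{K}$, smooth local splitting of the short exact sequence) is a reasonable way to make that step precise.
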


\begin{proof} Choose a local orthonormal frame for $M$, to obtain a local trivialization $\varphi\colon \pi^{-1}(U)\to U\times \RR^n$ of the tangent 
 bundle of $M$ which is a linear isometry when restricted to the fibers. Consider also the corresponding trivialization $\widetilde \varphi\colon \Curv(TM)|_U\to 
 U\times \Curv(\RR^n)$. By hypothesis, $\tau\Psi$ is a smooth section of $\Curv(TM)$ such that each $\tau_p\Psi$ is angular. 
 In terms of our local trivialization, this gives  angular curvature measures $\widetilde\varphi(\tau_p\Psi)$ on $\RR^n$ which  depend smoothly on $p$. 
 By Theorem~\ref{thm:ccc}, we may choose elements $\theta_p\in \largewedge^n(\RR^n\oplus \RR^n)^*$ which depend smoothly on $p$ and 
 satisfy $[\theta_p]=\tau_p \Psi$. Put $\psi=\tau^{-1}\theta$. By construction, each $\tau_p\psi$ has constant coefficients and 
 Lemma~\ref{lemma:tau} guarantees that $\Psi=[\psi]$ on $U$. Now choose a partition of unity to get a globally defined form $\psi$ with the desired properties. 
\end{proof}

Recall that $\pi_k$ denotes the projection to the degree $k$ component of $\calC(M)$.

\begin{lemma}\label{lemma:transfer restriction} Let  $\iota\colon M \to\overline M$ be a (not necessarily isometric)
immersion of Riemannian manifolds and let  $\Psi\in \calC_k(\overline M)$. Then 
 $$\pi_k \circ \tau \circ \iota^*(\Psi)= (d\iota)^*\circ \tau \circ \pi_k(\Psi)$$
\end{lemma}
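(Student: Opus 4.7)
My plan is to verify the equality pointwise on $M$ by comparing $c$-functions. Since both $\tau$ and $(d\iota_p)^*$ preserve the degree of a translation-invariant curvature measure, both sides of the claimed identity are sections of $\Curv(TM)$ of pure degree $k$. A pure degree $k$ translation-invariant curvature measure on $T_pM$ is determined by its $c$-function on closed convex cones $C\subset T_pM$ with $k$-dimensional lineality space $L=L(C)$; moreover, on such cones the $c$-function of an arbitrary translation-invariant curvature measure coincides with that of its degree $k$ component. It therefore suffices to show
\[
c_{\tau_p(\iota^*\Psi)}(C)=c_{(d\iota_p)^*\tau_{\iota(p)}\Psi}(C)
\]
at every $p\in M$ for all such $C$; by Lemma~\ref{lemma:triangulation} and \eqref{eq:inclusion_exclusion} I may further assume that $pr_{T_pM/L}(C)$ is simplicial.

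First, using Lemma~\ref{lemma:tau}, I would write $\Psi=[\omega]$ for some $\omega\in\Omega^n(T\overline M)$. Decomposing $\omega$ into its bidegree pieces $\omega^{p,n-p}$ under the Levi-Civita horizontal/vertical splitting of $TT\overline M$, each $[\omega^{p,n-p}]$ is a pure degree $p$ curvature measure. Since $\Psi\in\calC_k(\overline M)$, the pieces with $p<k$ must induce zero curvature measures and may therefore be subtracted from $\omega$ without changing $[\omega]$; I will henceforth assume that $\omega$ has bidegrees only $(p,n-p)$ with $p\geq k$.

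With this normalization I compute both sides using Lemma~\ref{lemma:bfs} and Lemma~\ref{lemma:fiber_int_disc}. As in the proof of Theorem~\ref{thm:pullback}, the left-hand side becomes
\[
c_{\tau_p(\iota^*\Psi)}(C)=(d\iota_p e_1\wedge\cdots\wedge d\iota_p e_k)\lrcorner\int_{(d\iota_pC)^\circ\cap D^n}r_{q_0}(j^*\omega),
\]
where $q_0=\iota(p)$, the $e_i$ form an orthonormal basis of $L$, $F=\iota(\exp^M(L\cap B(0,\varepsilon)))$, and $j\colon TF^\perp\hookrightarrow T\overline M$ is the inclusion of the normal bundle of $F\subset\overline M$. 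For the right-hand side I apply the same two lemmas on $\overline M$, but using the geodesic submanifold $F'=\exp^{\overline M}(d\iota_pL\cap B(0,\varepsilon))$, whose second fundamental form vanishes at $q_0$. The Jacobian $|\det(d\iota_p|_L)|$ arising from the pullback of a translation-invariant curvature measure by the non-isometric linear map $d\iota_p$ is absorbed into the change from an orthonormal basis of $d\iota_pL$ (in the metric of $\overline M$) to $(d\iota_pe_1,\ldots,d\iota_pe_k)$, yielding
\[
c_{(d\iota_p)^*\tau_{q_0}\Psi}(C)=(d\iota_pe_1\wedge\cdots\wedge d\iota_pe_k)\lrcorner\int_{(d\iota_pC)^\circ\cap D^n}r_{q_0}({j'}^*\omega),
\]
with $j'\colon TF'^\perp\hookrightarrow T\overline M$.

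The main obstacle is the pointwise comparison of the two integrands at $\xi\in(d\iota_pL)^\perp$, the common fiber over which both integrals are taken. Under the Levi-Civita horizontal/vertical splitting of $T_\xi T\overline M$, the pushforward $j_*$ of a horizontal lift in $TF^\perp$ of $Y\in T_{q_0}F$ is $(Y,S^F_\xi Y)$, where $S^F_\xi$ is the shape operator of $F\subset\overline M$ at $q_0$; by Lemma~\ref{lemma:lifts_T} the corresponding pushforward for the geodesic submanifold $F'$ is $(Y,0)$. Expanding $\omega_\xi$ evaluated on these mixed lifts by multilinearity in the vertical corrections, the difference between the two integrands decomposes into a sum indexed by nonempty subsets $S\subset\{1,\ldots,k\}$, in which each summand evaluates the bidegree $(k-|S|,n-k+|S|)$ component of $\omega$. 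For $|S|\geq 1$ the first bidegree index is strictly less than $k$, so by the normalization of $\omega$ chosen above, all such summands vanish. Hence the two integrands coincide, the $c$-functions agree, and the lemma follows.
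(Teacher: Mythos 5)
Your argument is correct, but it is genuinely different from the paper's. The paper cites the map $\Lambda'_k$ from \cite{sw_spheres}, defined via the rescaling limit $\Lambda'_k(\Psi)|_p = \lim_{t\to 0} t^{-k}(\phi\circ h_t)^*\Psi$ for a chart $\phi$ with $d\phi_0 = \mathrm{id}$, and uses \cite{sw_spheres}*{Proposition 3.5}, which identifies $\pi_k\circ\tau$ with $\Lambda'_k$. Because $\Lambda'_k$ is defined purely from the smooth structure, the desired naturality under immersions reduces to the flat model $\RR^m\hookrightarrow\RR^n$ together with naturality under diffeomorphisms, both of which are checked directly. You instead stay inside the framework already built up in the paper (Lemma~\ref{lemma:bfs}, Lemma~\ref{lemma:fiber_int_disc}, Lemma~\ref{lemma:tau}) and argue by comparing $c$-functions pointwise, following the computation in the proof of Theorem~\ref{thm:pullback}. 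The key observation, which the paper does not make explicit, is that once the representing $n$-form $\omega$ is normalized to have no bidegree $(p,n-p)$ piece with $p<k$ (permissible precisely because $\Psi\in\calC_k$), every term in the multilinear expansion that contains a vertical correction coming from the second fundamental form of $F=\iota\exp^M(L\cap B(0,\varepsilon))$ lands in a bidegree that has been killed, so only the $S=\emptyset$ term survives on both sides. You also correctly account for the Jacobian $|\det(d\iota_p|_L)|$ through the non-unit $k$-vector $d\iota_pe_1\wedge\cdots\wedge d\iota_pe_k$, and correctly observe that $F'=\exp^{\overline M}(d\iota_pL\cap B(0,\varepsilon))$ has vanishing second fundamental form at $\iota(p)$, so Lemma~\ref{lemma:lifts_T} applies. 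The tradeoff: the paper's proof is shorter and more conceptual but depends on an external reference; yours is longer and more computational but is self-contained within the techniques of this paper, and it has the additional merit of exposing exactly why the hypothesis $\Psi\in\calC_k$ is needed (the extrinsic curvature terms only feed bidegrees below $k$).

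One small point worth making explicit when you write this up: for $k=n$ the reduction to $c$-functions on cones with $n$-dimensional lineality space degenerates (the only such cone is the whole space and the normal disc current picture is trivial), so you should either treat $k=n$ separately (where $\iota^*$ kills $\pi_n\Psi$ whenever $\dim M<\dim\overline M$, and the statement is immediate when $\iota$ is a local diffeomorphism) or restrict attention to $k<n$ and note that the higher degrees are trivial.
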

\begin{proof} Recall from \cite{sw_spheres} the canonical map  $\Lambda_k'\colon \calC_k(M)\to \Gamma(\Curv_k(TM))$ defined as follows.
Let $p\in M$, and fix a local diffeomorphism $\phi:T_pM\to M$ with $\phi(0)=p, d\phi_0=\mathrm{id}$. 
For $t\in\RR$ let $h_t(y)=ty, y\in T_pM$. For $\Psi\in\calC_k(M)$ set
$$\Lambda_k'(\Psi)|_x:=\lim_{t\to 0}\frac{1}{t^k} (\phi\circ h_t)^*\Psi.$$
By \cite{sw_spheres}*{Proposition~3.5} we have 
 $$\pi_k\circ \tau(\Psi)=\Lambda'_k(\Psi).$$ 
 Thus we will reach the desired conclusion if we can prove 
 \begin{equation}\label{eq:pullback without metric}\Lambda'_k \circ \iota^* (\Psi) = (d\iota^*) \circ \Lambda'_k(\Psi).\end{equation}
 for every immersion $\iota\colon M\to\overline M$. 
 Now in the case where $\iota\colon \RR^m\to \RR^n$ is the standard inclusion the validity of \eqref{eq:pullback without metric} 
 is clear. The general case can be reduced to this situation if
 \eqref{eq:pullback without metric} is already established for diffeomorphisms.  This case is straightforward to verify. 
\end{proof}

\begin{lemma}\label{lemma:extension}
 Let $\iota\colon M\to \RR^n$ be an isometric embedding of an $m$-dimensional Riemannian manifold $M$ and let $q$ be some fixed point of $M$.
 If $\Phi$ is an angular curvature measure on $M$,
 then there exists $\Psi\in \calC(\RR^n)$ which is angular at all points of $M$ such that 
 $\iota^*\Psi$ coincides with $\Phi$ up to an element of  $\calC_{m-1}(M)$ in an open neighborhood of $q$.
\end{lemma}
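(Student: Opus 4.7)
The plan is to construct $\Psi$ degree by degree, matching $\Phi$ from degree $0$ through degree $m-2$; the unmatched components in degrees $m-1$ and $m$ then lie automatically in $\calC_{m-1}(M)$. Decompose $\Phi=\sum_{k=0}^{m}\Phi_k$ according to the grading of $\calC(M)$ induced by $\tau$. Since the degree decomposition of $\tau_p\Phi$ is compatible with the characterization in Lemma~\ref{lemma:char_angularity} (the function $f$ splits as $\sum_k f_k$ with $f_k$ attached to the degree-$k$ piece), each $\Phi_k$ is itself angular of degree $k$.

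Inductively we construct $\Psi^{(-1)}:=0$ and $\Psi^{(0)},\ldots,\Psi^{(m-2)}\in\calC(\RR^n)$, each angular at every point of $M$, such that $R_k:=\Phi-\iota^*\Psi^{(k)}\in\calC_{k+1}(M)$ on a fixed open neighborhood $U$ of $q$; then $\Psi:=\Psi^{(m-2)}$ will satisfy the lemma. At step $k$, Theorem~\ref{thm:pullback} tells us $\iota^*\Psi^{(k-1)}$ is angular at every point of $M$, hence so is $R_{k-1}$, and thus its degree-$k$ part $\eta_k:=(R_{k-1})_k$ is angular of degree $k$. We must construct $\Psi_k\in\calC(\RR^n)$ of pure degree $k$, angular at all points of $M$, with $\pi_k(\iota^*\Psi_k)=\eta_k$ on $U$; then $\Psi^{(k)}:=\Psi^{(k-1)}+\Psi_k$ does the job.

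To construct $\Psi_k$, apply Theorem~\ref{thm:ccc} on each Euclidean space $T_pM$: since $k\leq m-2<m-1$, it identifies $\tau_p\eta_k$ with a $2$-homogeneous polynomial $f_p$ on $\largewedge^k T_pM$ (restricted to the Pl\"ucker image of $\widetilde{\Grass}_k(T_pM)$), varying smoothly with $p\in M$. Because $\iota$ is an isometric embedding, the orthogonal projection $\mathrm{pr}_p\colon\RR^n\to T_pM$ depends smoothly on $p$, so $\widetilde f_p(v):=f_p(\largewedge^k\mathrm{pr}_p(v))$ defines a smooth family of $2$-homogeneous polynomials on $\largewedge^k\RR^n$. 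Since $k<n-1$, Theorem~\ref{thm:ccc} on $\RR^n$ turns each $\widetilde f_p$ into a translation-invariant angular curvature measure on $T_{\iota(p)}\RR^n$, producing a smooth section of $\Curv(T\RR^n)$ along $\iota(M)$. Extend this section smoothly to a tubular neighborhood of $\iota(M)$ (for instance by making it constant along geodesics normal to $M$), multiply by a smooth cutoff $\rho$ with $\rho\equiv 1$ near $\iota(q)$ supported in the tubular neighborhood, and take $\Psi_k\in\calC(\RR^n)$ to be $\tau^{-1}$ of the resulting global section. At every $q'\in\RR^n$, $\tau_{q'}\Psi_k$ is a scalar multiple of an angular translation-invariant curvature measure and hence angular.

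For the verification, Lemma~\ref{lemma:transfer restriction} applied at $p\in U$ (where $\rho(\iota(p))=1$) gives
\[\tau_p\bigl(\pi_k(\iota^*\Psi_k)\bigr)=(d\iota_p)^*\tau_{\iota(p)}\Psi_k,\]
whose polynomial under Theorem~\ref{thm:ccc} is $\widetilde f_p|_{\largewedge^k T_pM}=f_p$, because $\mathrm{pr}_p$ restricts to the identity on $T_pM$; this equals the polynomial representing $\tau_p\eta_k$, so $\pi_k(\iota^*\Psi_k)=\eta_k$ on $U$ and the induction closes. The step that most needs care is arranging all of this to depend smoothly on $p\in M$—the polynomials $f_p$, the projections $\mathrm{pr}_p$, the extensions $\widetilde f_p$, and the tubular-neighborhood prolongation—and using a single neighborhood $U$ and cutoff $\rho$ across the finitely many inductive steps so that $\sum_k\Psi_k$ is a well-defined global curvature measure on $\RR^n$.
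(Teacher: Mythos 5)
Your proof is correct and, while it shares the degree-by-degree inductive skeleton and the same key ingredients (Theorem~\ref{thm:ccc}, Theorem~\ref{thm:pullback}, Lemma~\ref{lemma:transfer restriction}), the construction of the per-degree extension $\Psi_k$ is genuinely different from the paper's. The paper works on the side of differential forms: it invokes Lemma~\ref{lemma:constant_coeff_form_global} to represent $\Phi$ by a form $\phi$ with $\tau_p\phi$ of constant coefficients, defines $\omega_p=(\mathrm{pr}_{T_pM}\times\mathrm{pr}_{T_pM})^*\tau_p\phi\wedge(\pi_2\circ\mathrm{pr}_{T_pM^\perp})^*\theta_p$ by wedging with a unit normal covolume $\theta_p$, extends to a form $\widetilde\omega$ on $T\RR^n$, and verifies the matching by an explicit computation over the normal disc current (picking up a normalization constant $\omega_{n-m}\cdot\mathrm{const}$ that it then absorbs). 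You instead work directly on the dual, Grassmannian side: use Theorem~\ref{thm:ccc} to encode $\tau_p\eta_k$ by a $2$-homogeneous polynomial $f_p$ on $\largewedge^k T_pM$, pull it back to a polynomial $\widetilde f_p=f_p\circ\largewedge^k\mathrm{pr}_p$ on $\largewedge^k\RR^n$, and apply Theorem~\ref{thm:ccc} again in $\RR^n$. Your verification then rests on the fact that pulling back a translation-invariant angular curvature measure along a linear isometric inclusion $T_pM\hookrightarrow T_{\iota(p)}\RR^n$ simply restricts its angular function on the Grassmannian — which holds with no normalization constant precisely because the external angle is independent of the ambient space (stated in Section~\ref{sec:cones}) and $k$-volumes are intrinsic; this elementary fact replaces the paper's disc-integral computation. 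You should make that restriction property explicit, since it is the step doing the real work, but it is a short computation from \eqref{eq:angularity}. Also note that your upward induction and the paper's downward induction on the degree are equivalent; the one housekeeping point you correctly flag is shrinking the neighborhood $U$ at each of the finitely many steps.
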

\begin{proof} It clearly suffices to prove the lemma for curvature measures of pure degree.  If $\Phi$ has degree $m-1$ or $m$, then there is nothing to prove. Assume therefore  that  $\Phi$ has degree $k<m-1$
and that the lemma has already been established for  curvature measure 
of degree greater than $k$. 
 Since $\Phi$ is angular and has degree $k<m-1$, there exists by Lemma~\ref{lemma:constant_coeff_form_global} an $m$-form $\phi\in \Omega^m(TM)$ 
 such that $\Phi=[\phi]$ and $\tau_p\phi\in \Omega^m(T_pM\oplus T_pM)^{tr}$ has constant coefficients for all $p\in M$. 
 
Choose  $\theta_p\in \largewedge^{n-m}((T_pM)^\perp)^*$ depending smoothly on $p$ and with $|\theta_p|=1$ in some neighborhood of $q$. 
Now put 
$$\omega_p= (pr_{T_pM}\times pr_{T_pM})^* \tau_p \phi \wedge (\pi_2\circ pr_{T_pM^\perp})^* \theta_p\in \Omega^n(T_p\RR^n\oplus T_p\RR^n)^{tr}.$$
Here $pr_{T_pM}\colon T_p\RR^n\to T_p M$ denotes the orthogonal projection and $\pi_2\colon T_p\RR^n\oplus T_p\RR^n\to T_p\RR^n$ the  projection 
to the second factor. By construction, $\omega_p$ has constant coefficients. Now choose some smooth form $\widetilde \omega\in \Omega^n(T\RR^n)$ 
of bi-degree $(k,n-k)$ such that
$\tau_p\widetilde \omega$ has constant coefficients at points of $M$ and  coincides with $\omega_p$ in some neighborhood of $U\subset M$ of $q$. 
Let $\Psi$ be the curvature measure on $\RR^n$ defined by $\widetilde\omega$.

The normal disc current of $P\subset T_pM$ inside $T_p\RR^n$ can be constructed from the normal disc current of $P$ inside $T_pM$ via
$$N_1^{T_p\RR^n} (P)= \{(x,u+v)\colon (x,u)\in N_1^{T_pM}(P), \ v\in T_pM^\perp, \ |v|^2\leq 1-|u|^2\}.$$
Hence, if $f(x,u,v)= (x,u+\sqrt{1-|u|^2}v)$, then
$$ N_1^{T_p\RR^n} (P)= f_*(N_1^{T_pM}(P)\times D^{n-m})$$ and so, by Lemma~\ref{lemma:tau}, for $P\subset T_pM$ 
\begin{align*}
  \tau_p\Psi(P,V) & = \int_{N_1^{T_p\RR^n}(P)\cap \pi^{-1}(V)} \tau_p\widetilde\omega\\
 & = \int_{N_1^{T_pM}(P)\cap \pi^{-1}(V)} \int_{D^{n-m}} f^* \omega\\
 & = \omega_{n-m}  \int_{N_1^{T_pM}(P)\cap \pi^{-1}(V)} (1-|u|^2)^{(n-m)/2} \tau_p\phi.
\end{align*}
Since $\tau_p\phi$ has constant coefficients, the last integral is a non-zero  constant multiple of  $ \Phi(P,V)$. 
Adjusting $\Psi$ by some non-zero constant, we get 
 \begin{equation}\label{eq:pullback_angular} (d\iota_p)^*\tau_p\Psi= \tau_p \Phi, \qquad \text{for all }p\in U.\end{equation}
 By Theorem~\ref{thm:pullback},  $\iota^*\Psi-\Phi$ is angular and, since the pullback preserves the filtration on curvature measures, lies 
 in $\calC_k(M)$. But by Lemma~\ref{lemma:transfer restriction} and \eqref{eq:pullback_angular}, $\iota^*\Psi-\Phi$  has no non-trivial component of degree less than $k+1$.
 By induction, this finishes the proof.
 \end{proof}

\begin{proof}[Proof of Theorem~\ref{conj:angularity}] 

Choose an isometric embedding of  $M$ into some Euclidean space $ \RR^n$. Fix some point $q$ in $M$.
By Lemma~\ref{lemma:extension} there exist
$\Psi \in\calC(\RR^n)$  and $\Xi\in \calC_{m-1}(M)$
such  that $\Psi$ is angular at  points of $ M$ and $\Phi$ coincides with $\iota^*\Psi + \Xi$ 
in some neighborhood $W\subset M$ of $q$.  By Theorem~\ref{thm:product} the Alesker product commutes with pullback and hence
 \begin{align*}
  (V_1 \cdot \Phi)|_W & = V_1|_W \cdot \Phi|_W\\
  & = V_1|_W \cdot (\iota^* \Psi + \Xi)|_W\\
  & = (V_1\cdot \iota^*\Psi)|_W + (V_1\cdot \Xi)|_W\\
   & = (\iota^*(V_1\cdot\Psi) + V_1\cdot \Xi)|_W,
 \end{align*}
 where the last equality follows from the invariance of the Lipschitz-Killing valuations 
 under pullback by isometric immersions. 
 Since the Alesker product preserves the filtration on curvature measures and every curvature measure in $\calC_{m-1}(M)$ is angular, we conclude that $V_1 \cdot \Xi$ is angular. 
 Lemma~\ref{lemma:angularity}
 implies that  $V_1\cdot \Psi$ is angular at points of  $M$. Together with Theorem~\ref{thm:pullback} this implies that 
 $\iota^*(V_1\cdot \Psi)$ is angular. We conclude that 
 $V_1\cdot \Phi$ is angular in $W$.   Since $q$ was arbitrary and $ V_k= \frac{2^k}{k!\omega_k} V_1^k$, this completes the proof. 
 \end{proof}

\begin{bibdiv}
\begin{biblist}

 \setlength\itemsep{.15cm}

\bib{alesker_irred}{article}{
   author={Alesker, Semyon},
   title={Description of translation invariant valuations on convex sets
   with solution of P. McMullen's conjecture},
   journal={Geom. Funct. Anal.},
   volume={11},
   date={2001},
   number={2},
   pages={244--272},
   issn={1016-443X},
   review={\MR{1837364}},
   doi={10.1007/PL00001675},
}

\bib{alesker_hard_lefschetz}{article}{
   author={Alesker, Semyon},
   title={Hard Lefschetz theorem for valuations, complex integral geometry,
   and unitarily invariant valuations},
   journal={J. Differential Geom.},
   volume={63},
   date={2003},
   number={1},
   pages={63--95},
   issn={0022-040X},
   review={\MR{2015260}},
}
\bib{alesker_product}{article}{
   author={Alesker, Semyon},
   title={The multiplicative structure on continuous polynomial valuations},
   journal={Geom. Funct. Anal.},
   volume={14},
   date={2004},
   number={1},
   pages={1--26},
   issn={1016-443X},
   review={\MR{2053598}},
   doi={10.1007/s00039-004-0450-2},
}

\bib{valmfdsI}{article}{
   author={Alesker, Semyon},
   title={Theory of valuations on manifolds. I. Linear spaces},
   journal={Israel J. Math.},
   volume={156},
   date={2006},
   pages={311--339},
   issn={0021-2172},
   review={\MR{2282381}},
   doi={10.1007/BF02773837},
}

\bib{valmfdsII}{article}{
   author={Alesker, Semyon},
   title={Theory of valuations on manifolds. II},
   journal={Adv. Math.},
   volume={207},
   date={2006},
   number={1},
   pages={420--454},
   issn={0001-8708},
   review={\MR{2264077}},
   doi={10.1016/j.aim.2005.11.015},
}

\bib{valmfdsIV}{article}{
   author={Alesker, Semyon},
   title={Theory of valuations on manifolds. IV. New properties of the
   multiplicative structure},
   conference={
      title={Geometric aspects of functional analysis},
   },
   book={
      series={Lecture Notes in Math.},
      volume={1910},
      publisher={Springer, Berlin},
   },
   date={2007},
   pages={1--44},
}

\bib{valmfds_survey}{article}{
   author={Alesker, Semyon},
   title={Theory of valuations on manifolds: a survey},
   journal={Geom. Funct. Anal.},
   volume={17},
   date={2007},
   number={4},
   pages={1321--1341},
   issn={1016-443X},
   review={\MR{2373020}},
   doi={10.1007/s00039-007-0631-x},
}

\bib{valmfdsIG}{article}{
   author={Alesker, Semyon},
   title={Valuations on manifolds and integral geometry},
   journal={Geom. Funct. Anal.},
   volume={20},
   date={2010},
   number={5},
   pages={1073--1143},
   issn={1016-443X},
   review={\MR{2746948}},
   doi={10.1007/s00039-010-0088-1},
}

\bib{alesker_fourier}{article}{
   author={Alesker, Semyon},
   title={A Fourier-type transform on translation-invariant valuations on
   convex sets},
   journal={Israel J. Math.},
   volume={181},
   date={2011},
   pages={189--294},
   issn={0021-2172},
   review={\MR{2773042}},
   doi={10.1007/s11856-011-0008-6},
}

\bib{ab_product}{article}{
   author={Alesker, Semyon},
   author={Bernig, Andreas},
   title={The product on smooth and generalized valuations},
   journal={Amer. J. Math.},
   volume={134},
   date={2012},
   number={2},
   pages={507--560},
   issn={0002-9327},
   review={\MR{2905004}},
   doi={10.1353/ajm.2012.0011},
}

\bib{valmfdsIII}{article}{
   author={Alesker, Semyon},
   author={Fu, Joseph H. G.},
   title={Theory of valuations on manifolds. III. Multiplicative structure
   in the general case},
   journal={Trans. Amer. Math. Soc.},
   volume={360},
   date={2008},
   number={4},
   pages={1951--1981},
   issn={0002-9947},
   review={\MR{2366970}},
   doi={10.1090/S0002-9947-07-04489-3},
}

\bib{crm_bcn}{book}{
   author={Alesker, Semyon},
   author={Fu, Joseph H. G.},
   title={Integral geometry and valuations},
   series={Advanced Courses in Mathematics. CRM Barcelona},
   %note={Lectures from the Advanced Course on Integral Geometry and
   %Valuation Theory held at the Centre de Recerca Matem\`atica (CRM),
   %Barcelona, September 6--10, 2010;
   %Edited by Eduardo Gallego and Gil Solanes},
   publisher={Birkh\"auser/Springer, Basel},
   date={2014},
   pages={viii+112},
   isbn={978-3-0348-0873-6},
   isbn={978-3-0348-0874-3},
   %review={\MR{3380549}},
}

\bib{bf_indefinite}{article}{
   author={Bernig, Andreas},
   author={Faifman, Dmitry},
   title={Valuation theory of indefinite orthogonal groups},
   journal={J. Funct. Anal.},
   volume={273},
   date={2017},
   number={6},
   pages={2167--2247},
   issn={0022-1236},
   review={\MR{3669033}},
   doi={10.1016/j.jfa.2017.06.005},
}
	
\bib{bf_convolution}{article}{
   author={Bernig, Andreas},
   author={Fu, Joseph H. G.},
   title={Convolution of convex valuations},
   journal={Geom. Dedicata},
   volume={123},
   date={2006},
   pages={153--169},
   issn={0046-5755},

}

\bib{hig}{article}{
   author={Bernig, Andreas},
   author={Fu, Joseph H. G.},
   title={Hermitian integral geometry},
   journal={Ann. of Math. (2)},
   volume={173},
   date={2011},
   pages={907--945},
}

\bib{bfs}{article}{
   author={Bernig, A.},
   author={Fu, J. H. G.},
   author={Solanes, G.},
   title={Integral geometry of complex space forms},
   journal={Geom. Funct. Anal.},
   volume={24},
   date={2014},
   number={2},
   pages={403--492},
   issn={1016-443X},
   review={\MR{3192033}},
   doi={10.1007/s00039-014-0251-1},
}

\bib{bfs_dual}{article}{
   author={Bernig, Andreas},
   author={Fu, Joseph H. G.},
   author={Solanes, Gil},
   title={Dual curvature measures in Hermitian integral geometry},
   conference={
      title={Analytic aspects of convexity},
   },
   book={
      series={Springer INdAM Ser.},
      volume={25},
      publisher={Springer, Cham},
   },
   date={2018},
   pages={1--17},
   review={\MR{3753099}},
}

\bib{blaschke39}{article}{
   author={Blaschke, W.},
   title={Densita negli spazi di Hermite},
   journal={Rendiconti dell' Academia dei Lincei},
   volume={29},
   date={1939},
   pages={105--108},
}
		
\bib{cms}{article}{
   author={Cheeger, Jeff},
   author={M\"{u}ller, Werner},
   author={Schrader, Robert},
   title={On the curvature of piecewise flat spaces},
   journal={Comm. Math. Phys.},
   volume={92},
   date={1984},
   number={3},
   pages={405--454},
   issn={0010-3616},
   review={\MR{734226}},
}
	
\bib{donnelly}{article}{
   author={Donnelly, Harold},
   title={Heat equation and the volume of tubes},
   journal={Invent. Math.},
   volume={29},
   date={1975},
   number={3},
   pages={239--243},
   issn={0020-9910},
   review={\MR{0402832}},
   doi={10.1007/BF01389852},
}

\bib{faifman_crofton}{article}{
   author={Faifman, Dmitry},
   title={Crofton formulas and indefinite signature},
   journal={Geom. Funct. Anal.},
   volume={27},
   date={2017},
   number={3},
   pages={489--540},
   issn={1016-443X},
   review={\MR{3655955}},
   doi={10.1007/s00039-017-0406-y},
}

\bib{faifman_contact}{article}{
   author={Faifman, Dmitry},
   title={Contact integral geometry and the Heisenberg algebra},
   eprint={arXiv:1712.09313},
}

\bib{federer}{article}{
   author={Federer, H.},
   title={Curvature measures},
   journal={Trans. Amer. Math. Soc.},
   volume={93},
   date={1959},
   pages={418--491},
   issn={0002-9947},
   review={\MR{0110078}},
   doi={10.2307/1993504},
}

\bib{fu_subanalytic}{article}{
   author={Fu, Joseph H. G.},
   title={Curvature measures of subanalytic sets},
   journal={Amer. J. Math.},
   volume={116},
   date={1994},
   number={4},
   pages={819--880},
   issn={0002-9327},
   review={\MR{1287941}},
   doi={10.2307/2375003},
}

%\bib{fu_unitary}{article}{
%    author={Fu, Joseph H. G.},
%    title={Structure of the unitary valuation algebra},
%    journal={J. Differential Geom.},
%    volume={72},
%    date={2006},
%    number={3},
%    pages={509--533},
%    issn={0022-040X},
%    review={\MR{2219942}},
% }		

\bib{fu_intersection}{article}{
   author={Fu, Joseph H. G.},
   title={Intersection theory and the Alesker product},
   journal={Indiana Univ. Math. J.},
   volume={65},
   date={2016},
   number={4},
   pages={1347--1371},
   issn={0022-2518},
   review={\MR{3549204}},
   doi={10.1512/iumj.2016.65.5846},
}

\bib{fu_sandbjerg}{article}{
   author={Fu, Joseph H. G.},
   title={Integral geometric regularity},
   conference={
      title={Tensor valuations and their applications in stochastic geometry
      and imaging},
   },
   book={
      series={Lecture Notes in Math.},
      volume={2177},
      publisher={Springer, Cham},
   },
   date={2017},
   pages={261--299},
   %review={\MR{3702376}},
}

\bib{fpr_wdc}{article}{
   author={Fu, J. H. G.},
   author={Pokorn\'y, Du\v san},
   author={Rataj, Jan},
   title={Kinematic formulas for sets defined by differences of convex
   functions},
   journal={Adv. Math.},
   volume={311},
   date={2017},
   pages={796--832},
   issn={0001-8708},
   review={\MR{3628231}},
   doi={10.1016/j.aim.2017.03.003},
}

\bib{fw_Riemannian}{article}{
   author={Fu, Joseph H. G.},
   author={Wannerer, Thomas},
   title={Riemannian curvature measures},
   journal={Geom. Funct. Anal.},
   status={in press},
   eprint={arxiv:1711.02155},
}

\bib{fulton_harris}{book}{
   author={Fulton, W.},
   author={Harris, J.},
   title={Representation theory},
   series={Graduate Texts in Mathematics},
   volume={129},
   note={A first course;
   Readings in Mathematics},
   publisher={Springer-Verlag, New York},
   date={1991},
   pages={xvi+551},
   isbn={0-387-97527-6},
   isbn={0-387-97495-4},
   review={\MR{1153249}},
   doi={10.1007/978-1-4612-0979-9},
}

% \bib{hadwiger}{book}{
%    author={Hadwiger, H.},
%    title={Vorlesungen \"{u}ber Inhalt, Oberfl\"{a}che und Isoperimetrie},
%    language={German},
%    publisher={Springer-Verlag, Berlin-G\"{o}ttingen-Heidelberg},
%    date={1957},
%    pages={xiii+312},
%    review={\MR{0102775}},
% }
%    

\bib{gray_tubes}{book}{
   author={Gray, Alfred},
   title={Tubes},
   series={Progress in Mathematics},
   volume={221},
   edition={2},
   note={With a preface by Vicente Miquel},
   publisher={Birkh\"{a}user Verlag, Basel},
   date={2004},
   pages={xiv+280},
   isbn={3-7643-6907-8},
   review={\MR{2024928}},
   doi={10.1007/978-3-0348-7966-8},
}

\bib{howe_etal}{article}{
   author={Howe, Roger},
   author={Tan, Eng-Chye},
   author={Willenbring, Jeb F.},
   title={Stable branching rules for classical symmetric pairs},
   journal={Trans. Amer. Math. Soc.},
   volume={357},
   date={2005},
   number={4},
   pages={1601--1626},
   issn={0002-9947},
   review={\MR{2115378}},
   doi={10.1090/S0002-9947-04-03722-5},
}

\bib{klain_short}{article}{
   author={Klain, Daniel A.},
   title={A short proof of Hadwiger's characterization theorem},
   journal={Mathematika},
   volume={42},
   date={1995},
   number={2},
   pages={329--339},
   issn={0025-5793},
   review={\MR{1376731}},
   doi={10.1112/S0025579300014625},
}

\bib{klain_rota}{book}{
   author={Klain, Daniel A.},
   author={Rota, Gian-Carlo},
   title={Introduction to geometric probability},
   series={Lezioni Lincee. [Lincei Lectures]},
   publisher={Cambridge University Press, Cambridge},
   date={1997},
   pages={xiv+178},
   isbn={0-521-59362-X},
   isbn={0-521-59654-8},
   review={\MR{1608265}},
}

\bib{ludwig_minkowski}{article}{
   author={Ludwig, Monika},
   title={Minkowski valuations},
   journal={Trans. Amer. Math. Soc.},
   volume={357},
   date={2005},
   number={10},
   pages={4191--4213},
   issn={0002-9947},
   review={\MR{2159706}},
   doi={10.1090/S0002-9947-04-03666-9},
}

\bib{ludwig_intersection}{article}{
   author={Ludwig, Monika},
   title={Intersection bodies and valuations},
   journal={Amer. J. Math.},
   volume={128},
   date={2006},
   number={6},
   pages={1409--1428},
   issn={0002-9327},
   review={\MR{2275906}},
}

\bib{ludwig_HT}{article}{
   author={Ludwig, Monika},
   title={Minkowski areas and valuations},
   journal={J. Differential Geom.},
   volume={86},
   date={2010},
   number={1},
   pages={133--161},
   issn={0022-040X},
   review={\MR{2772547}},
}

\bib{lr_affine}{article}{
   author={Ludwig, Monika},
   author={Reitzner, Matthias},
   title={A characterization of affine surface area},
   journal={Adv. Math.},
   volume={147},
   date={1999},
   number={1},
   pages={138--172},
   issn={0001-8708},
   review={\MR{1725817}},
   doi={10.1006/aima.1999.1832},
}

\bib{lyz_proj}{article}{
   author={Lutwak, Erwin},
   author={Yang, Deane},
   author={Zhang, Gaoyong},
   title={$L_p$ affine isoperimetric inequalities},
   journal={J. Differential Geom.},
   volume={56},
   date={2000},
   number={1},
   pages={111--132},
   issn={0022-040X},
   review={\MR{1863023}},
}

\bib{lz_intersect}{article}{
   author={Lutwak, Erwin},
   author={Zhang, Gaoyong},
   title={Blaschke-Santal\'{o} inequalities},
   journal={J. Differential Geom.},
   volume={47},
   date={1997},
   number={1},
   pages={1--16},
   issn={0022-040X},
   review={\MR{1601426}},
}

\bib{mcmullen_angle-sum}{article}{
   author={McMullen, P.},
   title={Non-linear angle-sum relations for polyhedral cones and polytopes},
   journal={Math. Proc. Cambridge Philos. Soc.},
   volume={78},
   date={1975},
   number={2},
   pages={247--261},
   issn={0305-0041},
   review={\MR{0394436}},
   doi={10.1017/S0305004100051665},
}

\bib{mcmullen_valuations}{article}{
   author={McMullen, P.},
   title={Valuations and Euler-type relations on certain classes of convex
   polytopes},
   journal={Proc. London Math. Soc. (3)},
   volume={35},
   date={1977},
   number={1},
   pages={113--135},
   issn={0024-6115},
   review={\MR{0448239}},
   doi={10.1112/plms/s3-35.1.113},
}

\bib{milnor}{book}{
   author={Milnor, John},
   title={Collected papers. Vol. 1},
   note={Geometry},
   publisher={Publish or Perish, Inc., Houston, TX},
   date={1994},
   pages={x+295},
   isbn={0-914098-30-6},
   review={\MR{1277810}},
}

\bib{oneill}{book}{
   author={O'Neill, Barrett},
   title={Semi-Riemannian geometry},
   series={Pure and Applied Mathematics},
   volume={103},
   note={With applications to relativity},
   publisher={Academic Press, Inc. [Harcourt Brace Jovanovich, Publishers],
   New York},
   date={1983},
   pages={xiii+468},
   isbn={0-12-526740-1},
   review={\MR{719023}},
}

\bib{pr_wdc}{article}{
   author={Pokorn\'y, Du\v san},
   author={Rataj, Jan},
   title={Normal cycles and curvature measures of sets with d.c. boundary},
   journal={Adv. Math.},
   volume={248},
   date={2013},
   pages={963--985},
   issn={0001-8708},
   review={\MR{3107534}},
   doi={10.1016/j.aim.2013.08.022},
}

\bib{rohde40}{article}{
   author={Rohde, H.},
   title={Integralgeometrie 33. Unit\"are Integralgeometrie},
   language={German},
   journal={Abh. Math. Sem. Hansischen Univ.},
   volume={13},
   date={1940},
   pages={295--318},
   review={\MR{0002199 (2,12h)}},
}

\bib{santalo52}{article}{
   author={Santal{\'o}, L. A.},
   title={Integral geometry in Hermitian spaces},
   journal={Amer. J. Math.},
   volume={74},
   date={1952},
   pages={423--434},
   issn={0002-9327},
   review={\MR{0048062 (13,971g)}},
}
		
\bib{schneider_kinematische}{article}{
   author={Schneider, Rolf},
   title={Kinematische Ber\"uhrma\ss e f\"ur konvexe K\"orper},
   language={German},
   journal={Abh. Math. Sem. Univ. Hamburg},
   volume={44},
   date={1975},
   pages={12--23 (1976)},
   issn={0025-5858},
   review={\MR{0394447}},
   doi={10.1007/BF02992942},
}
\bib{schneider_curvature}{article}{
   author={Schneider, Rolf},
   title={Curvature measures of convex bodies},
   journal={Ann. Mat. Pura Appl. (4)},
   volume={116},
   date={1978},
   pages={101--134},
   issn={0003-4622},
   review={\MR{506976}},
   doi={10.1007/BF02413869},
}

\bib{schneiderBM}{book}{
   author={Schneider, Rolf},
   title={Convex bodies: the Brunn-Minkowski theory},
   series={Encyclopedia of Mathematics and its Applications},
   volume={151},
   edition={Second expanded edition},
   publisher={Cambridge University Press, Cambridge},
   date={2014},
   pages={xxii+736},
   isbn={978-1-107-60101-7},
   review={\MR{3155183}},
}

\bib{sw_even}{article}{
   author={Schuster, Franz E.},
   author={Wannerer, Thomas},
   title={Even Minkowski valuations},
   journal={Amer. J. Math.},
   volume={137},
   date={2015},
   number={6},
   pages={1651--1683},
   issn={0002-9327},
   review={\MR{3432270}},
   doi={10.1353/ajm.2015.0041},
}

\bib{sw_gen}{article}{
   author={Schuster, Franz E.},
   author={Wannerer, Thomas},
   title={Minkowski valuations and generalized valuations},
   journal={J. Eur. Math. Soc. (JEMS)},
   volume={20},
   date={2018},
   number={8},
   pages={1851--1884},
}

\bib{sw_sig}{book}{
   author={Schneider, Rolf},
   author={Weil, Wolfgang},
   title={Stochastic and integral geometry},
   series={Probability and its Applications (New York)},
   publisher={Springer-Verlag, Berlin},
   date={2008},
   pages={xii+693},
   isbn={978-3-540-78858-4},
   review={\MR{2455326}},
   doi={10.1007/978-3-540-78859-1},
}

\bib{shifrin81}{article}{
   author={Shifrin, T.},
   title={The kinematic formula in complex integral geometry},
   journal={Trans. Amer. Math. Soc.},
   volume={264},
   date={1981},
   number={2},
   pages={255--293},
   issn={0002-9947},
   review={\MR{603763 (83a:53067a)}},
   doi={10.2307/1998539},
}

\bib{sw_spheres}{article}{
   author={Solanes, Gil},
   author={Wannerer, Thomas},
   title={Integral geometry of exceptional spheres},
   journal={J. Differential Geom.},
   status={in press},
   eprint={arXiv:1708.05861},
}

\bib{strichartz}{article}{
   author={Strichartz, Robert S.},
   title={The explicit Fourier decomposition of $L^{2}({\rm SO}(n)/{\rm
   SO}(n-m))$},
   journal={Canad. J. Math.},
   volume={27},
   date={1975},
   pages={294--310},
   issn={0008-414X},
   review={\MR{0380277}},
   doi={10.4153/CJM-1975-036-x},
}

\bib{varga39}{article}{
   author={Varga, O.},
   title={\"Uber die Integralinvarianten, die zu einer Kurve in der Hermiteschen Geometrie geh\"oren},
   language={German},
   journal={Acta Litt. Scientarum Szeged},
   volume={9},
   date={1939},
   pages={88--102},
}

\bib{weyl_tubes}{article}{
   author={Weyl,  H. },
   title={On the volume of tubes},
   journal= {Duke Math. J.},
   volume={61},
   date= {1939},
   pages={461--472}
}

\bib{zahle}{article}{
   author={Z\"ahle,  M. },
   title={Curvature and currents for unions of sets with positive reach},
   journal= {Geom. Ded.},
   volume={23},
   date= {1987},
   pages={155--172},
}

\end{biblist}
\end{bibdiv}

\end{document}